\documentclass[oneside,article]{memoir}
\usepackage{amsmath}         
\usepackage{amsthm,thmtools,thm-restate} 
\usepackage{enumitem}        

\usepackage{tikz}
  \usetikzlibrary{matrix,arrows,positioning}
  
\usepackage{tikz-cd}

\usepackage{float} 
\usepackage{caption, subcaption} 


\usepackage[nobysame,alphabetic]{amsrefs} 
\usepackage[hidelinks]{hyperref}                   
\usepackage[capitalize,nosort]{cleveref}           


\usepackage{amssymb}
\usepackage[mathscr]{euscript}
\usepackage{relsize}
\usepackage{stmaryrd}
\usepackage{stackengine}

\usepackage{indentfirst} 
\usepackage{multicol}

\usepackage[inline,nomargin]{fixme} 
  \fxsetup{targetlayout=color}

\usepackage{float} 


\isopage[12]

\setlrmargins{*}{*}{1}
\checkandfixthelayout

\counterwithout{section}{chapter}
\usepackage{appendix}



  \newcommand{\iso}{\mathrel{\cong}}






  \newcommand{\op}{{\mathord\mathrm{op}}}

  \newcommand{\ob}{\operatorname{ob}}


  \newcommand{\push}{\cup}

  \newcommand{\colim}{\operatorname{colim}}









  \newcommand{\bd}{\partial}

  \newcommand{\gprod}{\otimes}




  \newcommand{\id}[1][]{\operatorname{id}_{#1}}
  



  \makeatletter
  \def\horn#1{\expandafter\horn@i#1,,\@nil}
  \def\horn@i#1,#2,#3\@nil{\Lambda^{#2}[#1]}
  \makeatother


  \newcommand{\uvar}{\mathord{\relbar}}

  \renewcommand{\tilde}{\widetilde}

  \renewcommand{\hat}{\widehat}

  \renewcommand{\bar}{\widebar}


  \newcommand{\nat}{{\mathord\mathbb{N}}}


  \newcommand{\cat}[1]{\mathscr{#1}}


  \newcommand{\ncat}[1]{\mathsf{#1}}




  \newcommand{\from}{\colon}

  \newcommand{\ito}{\hookrightarrow}











  \declaretheorem[style=definition,within=section]{definition}
  \declaretheorem[style=definition,numberlike=definition]{example}
  \declaretheorem[style=definition,numberlike=definition]{remark}

  \declaretheorem[style=plain,numberlike=definition]{corollary}
  \declaretheorem[style=plain,numberlike=definition]{lemma}
  \declaretheorem[style=plain,numberlike=definition]{proposition}
  \declaretheorem[style=plain,numberlike=definition]{theorem}

  \declaretheorem[style=plain,numbered=no,name=Theorem]{theorem*}

  \Crefname{corollary}{Corollary}{Corollaries}
  \Crefname{definition}{Definition}{Definitions}
  \Crefname{lemma}{Lemma}{Lemmas}
  \Crefname{proposition}{Proposition}{Propositions}
  \Crefname{remark}{Remark}{Remarks}
  \Crefname{theorem}{Theorem}{Theorems}


  
  \newlist{axioms}{enumerate}{1}
  \Crefname{axiomsi}{}{}




  \newenvironment{tikzeq*}
  {
    \begingroup
    \begin{equation*}
    \begin{tikzpicture}[baseline=(current bounding box.center)]
  }
  {
    \end{tikzpicture}
    \end{equation*}
    \endgroup
    \ignorespacesafterend
  }

  \tikzset
  {
    diagram/.style=
    {
      matrix of math nodes,
      column sep={4.3em,between origins},
      row sep={4em,between origins},
      text height=1.5ex,
      text depth=.25ex
    },
    over/.style={preaction={draw=white,-,line width=6pt}},
    every to/.style={font=\footnotesize},
    inj/.style={right hook->},
    surj/.style={-{Latex[open]}},
    cof/.style={>->},
    fib/.style={->>},
  }



  \DeclareFontFamily{U}{mathx}{\hyphenchar\font45}

  \DeclareFontShape{U}{mathx}{m}{n}{
    <5> <6> <7> <8> <9> <10>
    <10.95> <12> <14.4> <17.28> <20.74> <24.88>
    mathx10}{}

  \DeclareSymbolFont{mathx}{U}{mathx}{m}{n}

  \DeclareFontFamily{U}{mathb}{\hyphenchar\font45}

  \DeclareFontShape{U}{mathb}{m}{n}{
    <5> <6> <7> <8> <9> <10>
    <10.95> <12> <14.4> <17.28> <20.74> <24.88>
    mathb10}{}

  \DeclareSymbolFont{mathb}{U}{mathb}{m}{n}

  \DeclareMathAccent{\widebar}{0}{mathx}{"73}

  \DeclareMathSymbol{\Rsh}{\mathrel}{mathb}{"E9}

  \DeclareFontFamily{U}{MnSymbolA}{}

  \DeclareFontShape{U}{MnSymbolA}{m}{n}{
    <-6> MnSymbolA5
    <6-7> MnSymbolA6
    <7-8> MnSymbolA7
    <8-9> MnSymbolA8
    <9-10> MnSymbolA9
    <10-12> MnSymbolA10
    <12-> MnSymbolA12}{}

  \DeclareSymbolFont{MnSyA}{U}{MnSymbolA}{m}{n}

  \DeclareMathSymbol{\twoheaddownarrow}{\mathrel}{MnSyA}{27}


  \newcommand{\MSC}[1]{%
    \let\thempfn\relax
    \footnotetext[0]{2020 Mathematics Subject Classification: #1.}
  }

\declaretheorem[style=definition,numberlike=definition]{construction}
\Crefname{construction}{Construction}{Constructions}

\tikzstyle{vertex}=[circle, draw, minimum size=7pt, inner sep=0pt]


\newcommand{\cSet}{\mathsf{cSet}} 

\newcommand{\Graph}{\mathsf{Graph}}
\newcommand{\Set}{\mathsf{Set}}
\newcommand{\Ch}{\mathsf{Ch}} 
\newcommand{\Ab}{\mathsf{Ab}} 
\newcommand{\Z}{\mathbb{Z}} 


\newcommand{\fcat}[2]{{#2}^{#1}} 
\newcommand{\arr}[1]{\fcat{[1]}{#1}} 
\newcommand{\Top}{\ncat{Top}}
\newcommand{\adjunct}[4]{#1 \from #3 \rightleftarrows #4 : \! #2} 
\newcommand{\Mono}[1]{{#1}_{2}} 

\newcommand{\G}{\ncat{G}}
\newcommand{\Ghat}{\hat{\G}}
\newcommand{\im}{\mathsf{Im}} 
\newcommand{\gtimes}{\otimes} 
\newcommand{\gexp}[2]{\ensuremath{#1}^{\gtimes #2}} 
\newcommand{\ghom}[2]{\operatorname{hom}^{\gtimes}(#1, #2)} 

\newcommand{\cmap}[1][n]{\partial_{#1}} 
\newcommand{\image}{\operatorname{im}} 
\newcommand{\f}{\Phi} 
\newcommand{\lstep}[2][n]{\lambda^{#1}_{#2}}
\newcommand{\lstepbar}[2][n]{\bar{\lambda}^{#1}_{#2}}
\newcommand{\rstep}[2][n]{\rho^{#1}_{#2}}
\newcommand{\rstepbar}[2][n]{\bar{\rho}^{#1}_{#2}}
\newcommand{\indP}[1]{P_{#1}} 
\newcommand{\loopsp}[1][]{\Omega^{#1}}

\newcommand{\boxcat}{\mathord{\square}} 

\newcommand{\face}[2]{\partial^{#1}_{#2}} 
\newcommand{\degen}[2]{\sigma^{#1}_{#2}} 
\newcommand{\conn}[2]{\gamma^{#1}_{#2}} 
\newcommand{\cube}[1]{\mathord{\square^{#1}}} 
\newcommand{\obox}[2]{\mathord{\sqcap^{#1}_{#2}}} 
\newcommand{\dfobox}[1][n]{\mathord{\sqcap^{#1}_{i,\varepsilon}}} 
\newcommand{\Kan}{\mathsf{Kan}}
\newcommand{\reali}[2][]{\lvert #2 \rvert_{#1}} 
\newcommand{\sing}[2][]{\operatorname{Sing}_{#1} #2} 
\newcommand{\gnerve}[1][]{\mathrm{N}_{#1}} 
\newcommand{\lhom}{\operatorname{hom}_{L}} 
\newcommand{\rhom}{\operatorname{hom}_{R}} 
\newcommand{\pp}[1][\gprod]{\mathrel{\hat{#1}}} 

\newcommand{\restr}[2]{{#1}|_{#2}} 
\newcommand{\bound}[1]{\beta[#1]} 
\newcommand{\noproof}{\hfill\qedsymbol}
\newcommand{\sat}[1]{\operatorname{Sat} #1} 


\author{Daniel Carranza \and Krzysztof Kapulkin}
\title{Cubical setting for discrete homotopy theory, revisited}
\date{\today}

\begin{document}

  \maketitle

  \begin{abstract}
    We construct a functor associating a cubical set to a (simple) graph.
    We show that cubical sets arising in this way are Kan complexes, and that the A-groups of a graph coincide with the homotopy groups of the associated Kan complex.
    
    We use this to prove a conjecture of Babson, Barcelo, de Longueville, and Laubenbacher from 2006, and a strong version of the Hurewicz theorem in discrete homotopy theory.
    
     \MSC{05C25 (primary), 55U35, 18N40, 18N45 (secondary)}
  \end{abstract}

\tableofcontents*
  
\section*{Introduction}

Discrete homotopy theory, introduced in \cite{barcelo-kramer-laubenbacher-weaver,babson-barcelo-longueville-laubenbacher}, is a homotopy theory in the category of simple graphs.
It builds on the earlier work of Atkin \cite{atkin:i,atkin:ii}, made precise in \cite{kramer-laubenbacher}, on the homotopy theory of simplicial complexes, and can also be generalized to the homotopy theory of finite metric spaces \cite{barcelo-capraro-white}.
It has found numerous applications \cite[\S5--6]{barcelo-laubenbacher}, including in matroid theory, hyperplane arrangements, combinatorial time series analysis, and, more recently, in topological data analysis \cite{memoli-zhou}.

The key invariants associated to graphs in discrete homotopy theory are the A-groups $A_n(G, v)$, named after Atkin, which are the discrete analogue of homotopy groups $\pi_n(X, x)$, studied in the homotopy theory of topological spaces.
In \cite{babson-barcelo-longueville-laubenbacher}, Babson, Barcelo, de Longueville, and Laubenbacher construct an assignment $G \mapsto X_G$, taking a graph $G$ to a topological space, constructed as a certain cubical complex, and conjecture that the A-groups of $G$ coincide with the homotopy groups of $X_G$.
They further prove \cite[Thm.~5.2]{babson-barcelo-longueville-laubenbacher} their conjecture under an assumption of a cubical approximation property \cite[Prop.~5.1]{babson-barcelo-longueville-laubenbacher}, a cubical analogue of the simplicial approximation theorem, which remains open.

The assignment $G \mapsto X_G$ arises as a composite 
\[ \begin{tikzcd}
    \Graph \ar[r, "{\gnerve[1]}"] & \cSet \ar[r, "{\reali{\uvar}}"] & \Top\text{.}
\end{tikzcd} \]
Here $\cSet$ denotes a particular category of cubical sets, which are well studied combinatorial models for the homotopy theory of spaces \cite{cisinski:presheaves}.
(Specifically, cubical sets used in this paper are cubical sets with positive and negative connections, cf.~\cite{brown-higgins:algebra-of-cubes,doherty-kapulkin-lindsey-sattler,buchholtz-morehouse}.)
Informally, a cubical set consists of a family of sets $\{ X_n \}_{n \in \mathbb{N}}$ of $n$-cubes together with a family of structure maps indicating how different cubes `fit together,' e.g., that a certain $(n-1)$-cube is a face of another $n$-cube.
More formally, it is a presheaf on the category $\square$ of combinatorial cubes.
The functor $\gnerve[1] \from \Graph \to \cSet$ is obtained by taking the $n$-cubes of $\gnerve[1]{G}$ to be maps $\gexp{I_1}{n} \to G$, where $\gexp{I_1}{n}$ denotes the $n$-dimensional hypercube graph.
The functor $\reali{\uvar} \colon \cSet \to \Top$, called the geometric realization, assigns the topological $n$-cube $[0,1]^n$ to each (formal) $n$-cube of $X$ and then glues these cubes together according to structure maps.
(We will of course give precise definitions of all these notions later in the paper.)

A reader familiar with \cite{babson-barcelo-longueville-laubenbacher} will recognize a change in notation: the functor $\gnerve[1]$ above was in \cite[\S4]{babson-barcelo-longueville-laubenbacher} denoted by $M_*$. This change is intentional, as we wish to emphasize that this is only the first in a sequence of functors $\Graph \to \cSet$ and natural transformations:
\begin{equation} \label{eq:sequence-of-N}
 \begin{tikzcd}
    \gnerve[1] \ar[r] & \gnerve[2] \ar[r] & \gnerve[3] \ar[r] & \cdots \text{.}
\end{tikzcd}
\end{equation}
It is this sequence and its colimit
\[ \gnerve := \colim (\gnerve[1] \to \gnerve[2] \to \gnerve[3] \to \cdots)\]
that we investigate.
Our main technical results can be summarized by the following theorem.
\begin{theorem*}[cf.~\cref{thm:main,thm:a_eq_cset_pi}] \leavevmode
  \begin{enumerate}
    \item For a graph $G$, the cubical set $\gnerve G$ is a Kan complex.
    \item The natural transformations in (\ref{eq:sequence-of-N}) are natural weak equivalences.
    \item For a based graph $(G, v)$, there is a natural group isomorphism $A_n(G, v) \iso \pi_n(\gnerve G, v)$.
  \end{enumerate}
\end{theorem*}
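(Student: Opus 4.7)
The plan is to establish (1) and (2) as technical stepping stones to (3). First I unpack what the $n$-cubes of $\gnerve[k] G$ should look like: graph maps $\gexp{I_k}{n} \to G$ where $I_k$ is a path graph of length $k$ (generalizing $I_1$), with the transitions $\gnerve[k] \to \gnerve[k+1]$ coming from a retraction $I_{k+1} \to I_k$. The colimit $\gnerve G$ then has as $n$-cubes the graph maps $\gexp{I_k}{n} \to G$ ranging over all $k$, modulo this reparametrization. This is precisely the data of discrete $n$-loops and their A-homotopies in $G$, so the correspondence with $A_n(G,v)$ in part~(3) should be transparent once the Kan condition is in hand.

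For part~(1), given a horn $\obox{n}{i,\varepsilon} \to \gnerve G$, I would first pass to a common level $k$ so that all the given faces are represented by compatible maps out of $\gexp{I_k}{n-1}$. These glue into a single graph map from the hypercubical horn (the corresponding subgraph of $\gexp{I_k}{n}$) into $G$, and filling the cubical horn amounts to extending this graph map across the missing face. The key point is that after possibly enlarging $k$, such an extension always exists: stretching the interval provides enough room to build the required A-homotopy out of the data already given on the other faces. The argument should reduce to a general lemma about extending graph maps from a hypercubical horn to the full hypercube after subdivision, which one proves by induction on $n$ using the basic stretching and concatenation properties of A-homotopies.

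For part~(2), I would show each transition $\gnerve[k] G \to \gnerve[k+1] G$ induces an isomorphism on homotopy groups. For surjectivity, an $n$-cube of $\gnerve[k+1] G$ with boundary at $v$ is, after reparametrization, homotopic rel boundary to one coming from $\gnerve[k] G$, since the extra coordinate on each $I_{k+1}$-factor can be collapsed. Injectivity is analogous: an A-homotopy between cubes of $\gnerve[k] G$ that uses an interval of length $k+1$ can be re-indexed by the same stretching argument. Combined with part~(1), which ensures $\gnerve G$ is Kan, this shows the transition maps (and each $\gnerve[k] G \to \gnerve G$) are weak equivalences.

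For part~(3), once $\gnerve G$ is a Kan complex, $\pi_n(\gnerve G, v)$ has its standard combinatorial description as $n$-cubes sending $\bdcube{n}$ to $v$ modulo cubical homotopy rel $\bdcube{n}$. Unraveling the colimit, such an $n$-cube is a graph map $\gexp{I_k}{n} \to G$ sending the graph boundary to $v$ for some $k$, i.e., a representative of an element of $A_n(G, v)$; cubical homotopy rel boundary translates directly to A-homotopy rel boundary, and the group operations, which are concatenations along a chosen coordinate on either side, match under this bijection. The main obstacle I anticipate is the Kan condition in part~(1): producing horn fillers requires gluing the given faces into a consistent graph map on a hypercubical horn and then producing the missing face, all while controlling cubical connections and degeneracies and accurately predicting how much $k$ must be enlarged. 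The other parts should then follow by more routine, though still careful, work.
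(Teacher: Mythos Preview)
Your outline for parts~(1) and~(3) tracks the paper closely. For~(1), the paper does exactly what you propose: factor a horn $\dfobox \to \gnerve G$ through some $\gnerve[m] G$, then fill after enlarging~$m$. Rather than an induction on~$n$, the paper writes down an explicit graph map $\Phi \colon \gexp{I_{3m}}{n} \to \reali[m]{\dfobox}$ retracting the enlarged cube onto the open box, which makes the bound on the enlargement ($m \mapsto 3m$) completely transparent. For~(3), the paper's argument is the identification you describe, plus a one-line check (via an explicit concatenation square) that the group operations match.

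The gap is in your plan for part~(2). You propose to show each $\gnerve[k] G \to \gnerve[k+1] G$ induces an isomorphism on homotopy groups by manipulating combinatorial cubes and homotopies. But the cubical sets $\gnerve[k] G$ are \emph{not} Kan complexes, so combinatorial homotopy classes of cubes rel boundary do not compute $\pi_n$ of the geometric realization, and homotopy rel boundary need not even be an equivalence relation there. Concretely, your surjectivity step (``the extra coordinate on each $I_{k+1}$-factor can be collapsed'') fails: a map $\gexp{I_{k+1}}{n} \to G$ can genuinely use all $k+1$ steps in some direction, and there is no map $\gexp{I_{k+1}}{n} \to \gexp{I_k}{n}$ compatible with the cubical structure that would let you push it down (the paper remarks explicitly that the obvious sections of $l,r$ do not commute with face maps, so there is no retraction $\gnerve[k+1] G \to \gnerve[k] G$). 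Thus neither side of your proposed bijection is well-posed, and the ``collapse'' move does not exist.

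The paper avoids this entirely by proving something stronger and more structural: each $l^*, r^* \colon \gnerve[m] G \hookrightarrow \gnerve[m+1] G$ is \emph{anodyne}. This is done by exhibiting $\gnerve[m+1] G$ as built from the image of $\gnerve[m] G$ by a transfinite composite of pushouts along open-box inclusions, using explicit $(n{+}1)$-cubes $x\,\bar{\lambda}^{n}_{m,j}$ whose $(n{+}1,1)$-face is a prescribed $n$-cube of $\gnerve[m+1] G$ and whose remaining faces lie in the already-constructed subcomplex. This argument never mentions homotopy groups and requires no Kan hypothesis on the intermediate nerves; the weak-equivalence conclusion then falls out because anodyne maps are acyclic cofibrations. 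If you want to repair your approach, the cleanest fix is to abandon the $\pi_n$-comparison and instead prove this cell-attachment statement directly.
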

A few words of explanation are in order.
A Kan complex is a cubical set satisfying a certain lifting property making it particularly convenient for the development of homotopy theory.
In particular, in a companion paper \cite{carranza-kapulkin:homotopy-cubical}, we develop the theory of homotopy groups of Kan complexes and show that these agree with their topological analogues under the geometric realization functor.

This establishes the key ingredient required for the proof of the conjecture of Babson, Barcelo, de Longueville, and Laubenbacher, which asks for the commutativity of the outer square in the diagram:
\[ \begin{tikzcd}
     & \cSet_* \ar[rd, bend left, "|\uvar|"] & \\
    \Graph_* \ar[dr, bend right, swap, "A_n"] \ar[r, "\gnerve"] \ar[ru, bend left, "{\gnerve[1]}"] & \Kan_* \ar[d, "\pi_n"] \ar[u, hook] \ar[r, "|\uvar|"] & \Top_* \ar[ld, bend left, "\pi_n"]\\
    & \mathsf{Grp}  & 
\end{tikzcd} \]
By the theorem above, the two triangles on the left commute, with the upper one commuting up to a natural weak equivalence
(which is sent by the composite functor $\pi_n \circ |\uvar| \from \cSet_* \to \mathsf{Grp}$ to a natural isomorphism). 
The upper right triangle commutes on the nose and the lower right triangle commutes since it expresses the compatibility between homotopy groups of cubical sets and those of topological spaces, established in \cite{carranza-kapulkin:homotopy-cubical}.
Thus the $A$-groups of $(G, v)$ agree with those of $(\gnerve G, v)$, which in turn agree with those of $(\reali{\gnerve[1] G}, v)$, since the map $\gnerve[1] G \to \gnerve G$ is a weak equivalence.

\begin{theorem*}[Conjectured in \cite{babson-barcelo-longueville-laubenbacher}; cf.~\cref{thm:conj-bbdll}]
    There is a natural group isomorphism $A_n(G, v) \cong \pi_n(\reali{\gnerve[1]{G}}, v)$. 
\end{theorem*}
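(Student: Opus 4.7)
The plan is to compose three natural isomorphisms, one for each step implicit in the remark preceding the statement. First, I would invoke part (3) of the main technical theorem to get the natural group isomorphism $A_n(G, v) \iso \pi_n(\gnerve G, v)$. Then, since $\gnerve G$ is a Kan complex by part (1), I would apply the comparison of cubical and topological homotopy groups developed in the companion paper \cite{carranza-kapulkin:homotopy-cubical} to obtain a natural isomorphism $\pi_n(\gnerve G, v) \iso \pi_n(\reali{\gnerve G}, v)$ between the cubical homotopy groups of $\gnerve G$ and the topological homotopy groups of its geometric realization.

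For the third step, I would use part (2) of the main theorem, which asserts that the structure map $\gnerve[1] G \to \gnerve G$ in the colimit sequence (\ref{eq:sequence-of-N}) is a natural weak equivalence in $\cSet$. Applying $\reali{\uvar}$, which sends weak equivalences of cubical sets to weak homotopy equivalences of spaces (by Cisinski \cite{cisinski:presheaves}), yields a natural weak homotopy equivalence $\reali{\gnerve[1] G} \to \reali{\gnerve G}$ and hence an isomorphism $\pi_n(\reali{\gnerve[1] G}, v) \iso \pi_n(\reali{\gnerve G}, v)$. Inverting this last isomorphism and composing with the previous two produces the desired natural group isomorphism $A_n(G, v) \iso \pi_n(\reali{\gnerve[1] G}, v)$.

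The substantive work has been front-loaded into the main theorem (the Kan condition for $\gnerve G$ and the fact that $\gnerve[1] G \to \gnerve G$ is a weak equivalence) and into the companion paper (comparison of cubical and topological $\pi_n$ for Kan complexes), so no genuinely hard obstacle remains in this deduction. The only point requiring care is bookkeeping naturality: each of the three isomorphisms is individually natural in $(G, v)$, and their composite inherits naturality from the functoriality of $\gnerve[1]$, $\gnerve$, and $\reali{\uvar}$, together with the naturality asserted in parts (2) and (3) of the main theorem and in the companion paper's comparison. Since the map $\gnerve[1] G \to \gnerve G$ is part of a natural transformation, and since the isomorphism $A_n(G, v) \iso \pi_n(\gnerve G, v)$ is itself natural, the resulting chain is natural in the based graph $(G, v)$ on the nose.
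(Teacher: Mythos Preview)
Your proposal is correct and follows essentially the same three-step chain as the paper's proof: $A_n(G,v) \cong \pi_n(\gnerve G, v)$ via \cref{thm:a_eq_cset_pi}, then $\pi_n(\gnerve G, v) \cong \pi_n(\reali{\gnerve G}, v)$ via \cref{thm:cset_pi_eq_pi}, then $\pi_n(\reali{\gnerve G}, v) \cong \pi_n(\reali{\gnerve[1] G}, v)$ via \cref{thm:main} and \cref{thm:cset_we_to_top}. The only cosmetic difference is that you cite Cisinski for the last step, whereas in the paper weak equivalences in $\cSet$ are \emph{defined} as maps whose geometric realization is a weak homotopy equivalence, so no external reference is needed there.
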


The insight of \cite{babson-barcelo-longueville-laubenbacher} cannot be overstated.
It is a priori not clear, or at the very least it was not clear to us, that the A-groups of a graph should correspond to the homotopy groups of any space.
The fact that the space can be obtained from a graph in such a canonical and simple way is what drove us to the problem and to the field of discrete homotopy theory.
The title of our paper, ``Cubical setting for discrete homotopy theory, revisited'' pays tribute to this insight by alluding to the title ``A cubical set setting for the A-theory of graphs'' of \cite[\S3]{babson-barcelo-longueville-laubenbacher}.

It is also worth noting that the case of $n=1$ of \cref{thm:conj-bbdll} was previously proven in \cite[Prop.~5.12]{barcelo-kramer-laubenbacher-weaver} and perhaps helped inspire the statement of the conjecture in the general case.

Our main theorem allows us to derive a few more consequences of interest in discrete homotopy theory.
The first of those is a strong form of the Hurewicz theorem for graphs.
The Hurewicz theorem relates the first non-trivial homotopy group of a sufficiently connected space to its homology.
In discrete homotopy theory, it relates the first non-trivial A-group of a graph to its reduced discrete homology, introduced in \cite{barcelo-capraro-white}.

\begin{theorem*}[Discrete Hurewicz Theorem; cf.~\cref{graph-hurewicz}]
    Let $n \geq 2$ and $(G,v)$ be a connected pointed graph.
    Suppose $A_i (G,v) = 0$ for all $i \in \{ 1, \dots, n-1 \}$.
    Then the Hurewicz map $A_n (G,v) \to \tilde{DH}_n (G, v)$ from the $n$-th A-group to the $n$-th reduced discrete homology group is an isomorphism.
\end{theorem*}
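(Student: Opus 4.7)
The plan is to transport the theorem through the cubical nerve $\gnerve G$ and invoke the classical Hurewicz theorem on its geometric realization. First, I would establish a natural isomorphism $\tilde{DH}_n(G, v) \iso \tilde{H}_n(\reali{\gnerve G})$. The reduced discrete homology of \cite{barcelo-capraro-white} is defined as the homology of a normalized cubical chain complex whose $n$-chains are $\Z$-linear combinations of graph maps $\gexp{I_1}{n} \to G$ modulo degenerate cubes, with differential the alternating sum of face maps. This is precisely the normalized cubical chain complex computing $\tilde{H}_*(\gnerve[1] G)$. Since $\gnerve[1] G \to \gnerve G$ is a weak equivalence by the main theorem, it induces isomorphisms on homology, giving the desired identification (either cubically or after passage to $\reali{\uvar}$).

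Second, the main theorem supplies a natural isomorphism $A_n(G, v) \iso \pi_n(\gnerve G, v)$, and results of the companion paper \cite{carranza-kapulkin:homotopy-cubical} identify the latter with $\pi_n(\reali{\gnerve G}, v)$. Under the hypothesis $A_i(G, v) = 0$ for $i \in \{1, \ldots, n-1\}$, the space $\reali{\gnerve G}$ is therefore $(n-1)$-connected.

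The classical Hurewicz theorem then yields an isomorphism $\pi_n(\reali{\gnerve G}, v) \to H_n(\reali{\gnerve G})$ for $n \geq 2$. Composing with the isomorphisms from the previous paragraphs, I obtain an isomorphism $A_n(G, v) \to \tilde{DH}_n(G, v)$. The final step is to verify that this composite agrees with the Hurewicz map defined combinatorially. An element of $A_n(G, v)$ is represented by a based graph map $\gexp{I_1}{n} \to G$, which determines both a cubical chain in the normalized complex of $\gnerve[1] G$ and, via the unit of the adjunction, an element of $\pi_n(\gnerve G, v)$; the compatibility then reduces to the statement that the classical Hurewicz map sends the homotopy class of a based cube to its image as a normalized chain, which holds essentially by definition.

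The principal obstacle is reconciling conventions in the first step: the precise relations defining discrete homology (which chains are identified to zero beyond the strict degeneracies) must match the normalized cubical chain complex, which in the cubical-sets-with-connections framework of this paper may quotient by connections in addition to degeneracies. A careful reading of \cite{barcelo-capraro-white} alongside the standard treatment of normalized cubical chains should settle this, possibly via an intermediate quasi-isomorphism between the two normalizations; no deep new homological input is anticipated.
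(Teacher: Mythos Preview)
Your proposal is correct and follows essentially the same route as the paper: identify $A_n(G,v)\cong\pi_n(\gnerve G,v)$, invoke the Hurewicz theorem for the Kan complex $\gnerve G$ (equivalently, for its geometric realization), and then use the weak equivalence $\gnerve[1]G\to\gnerve G$ to identify $\tilde H_n(\gnerve G)$ with $\tilde{DH}_n(G,v)$. The only substantive difference is that the paper \emph{defines} the discrete Hurewicz homomorphism to be this composite (and then remarks that it recovers Lutz's combinatorial map), so your final compatibility check and your worry about normalization conventions are absorbed into the paper's Definition~\ref{def:disc-homol}; no separate verification is needed.
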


This generalizes the results of Lutz \cite[Thm.~5.10]{lutz}, who proves surjectivity of the Hurewicz map for a more restrictive class of graphs; and complements the result of Barcelo, Capraro, and White \cite[Thm.~4.1]{barcelo-capraro-white}, who prove the $1$-dimensional analogue of the Hurewicz theorem, namely that the Hurewicz map $A_1 (G,v) \to \tilde{DH}_1 (G, v)$ is surjective with kernel given by the commutator $[A_1 (G,v), A_1 (G,v)]$ subgroup.

Lastly, our main theorem allows us equip the category of graphs with additional structure making it amenable to techniques of abstract homotopy theory and higher category theory.

By our theorem, the functor $\gnerve \colon \Graph \to \cSet$ takes values in the full subcategory $\Kan$ of $\cSet$ spanned by Kan complexes.
This subcategory is known to carry the structure of a fibration category in the sense of Brown \cite{brown}.
In brief, a fibration category is a category equipped with two classes of maps: fibrations and weak equivalences, subject to some axioms.
Fibration categories are one of the main frameworks used in abstract homotopy theory (see, for instance, \cite{szumilo}).
We declare a map $f$ of graphs to be a fibration/weak equivalence if $\gnerve f$ is one in the fibration category of Kan complexes.
Our theorem guarantees that this gives a well-defined fibration category structure on $\Graph$ (\cref{thm:fibcat}), hence allowing for the use of techniques from abstract homotopy theory in discrete homotopy theory.
Furthermore, it follows that the weak equivalences of this fibration category are precisely graph maps inducing isomorphisms on all A-groups for all choices of the basepoint.

This also allows us to put the results of \cite[\S6]{babson-barcelo-longueville-laubenbacher} in the context of abstract homotopy theory by proving that the loop graph functor constructed there is an exact functor in the sense of fibration category theory (\cref{thm:graph_loopsp_exact}).

In a different direction, we observe that the functor $\gnerve \from \Graph \to \cSet$ is lax monoidal (\cref{thm:nerve_monoidal}).
The category of graphs is enriched over itself, meaning that the collection of graph maps between two graphs forms not merely a set, but a graph, and that the composition of graph maps defines a graph homomorphism.
Enrichment can be transferred along lax monoidal functors, which means that the category of graphs is, via $\gnerve$, canonically enriched over cubical sets, and, more precisely, over Kan complexes (\cref{thm:graph-locally-kan}).
This establishes a presentation of the $(\infty, 1)$-category of graphs, thus allowing for the use of techniques of higher category theory, developed extensively by Joyal and Lurie \cite{htt}, in discrete homotopy theory, via the cubical homotopy coherent nerve construction of \cite[\S2]{kapulkin-voevodsky}.

\textbf{Organization of the paper.}
This paper is organized as follows.
In \cref{sec:graphs,sec:cset}, we review the background on discrete homotopy theory and cubical sets, respectively.
In \cref{sec:nerves}, we explain the link between graphs and cubical sets by defining the functors $\gnerve[m]$ and $\gnerve$, and proving their basic properties.
The technical heart of the paper is contained in \cref{sec:main}, where we prove our main results.
In \cref{sec:consequences}, we proceed to deduce the consequences of our main theorem, as described above.

\textbf{Acknowledgements.}
We thank Bob Lutz for comments on an earlier draft of this paper and for the beautiful talks on discrete homotopy theory (at MSRI in Spring 2020 and at Western University in Fall 2020) got us interested in this area.
We are also very grateful to Yeonjoon Choi, Udit Mavinkurve, and Mohabat Tarkeshian, members of an informal seminar on discrete homotopy theory that we organized in Spring 2021.
In particular, Choi was the first to observe \cref{thm:nerve_monoidal}.
Finally, we would like to thank Eric Babson for suggesting to us the notion of a fiber bundle (cf.~\cref{def:fiber-bundle}).

During the work on this paper, the first author was partially supported by an NSERC Undergraduate Student Research Award and the second author was partially supported by an NSERC Discovery Grant.
We thank NSERC for its generosity.

\section{Discrete homotopy theory} \label{sec:graphs}

\subsection*{The category of graphs}
We define the category of simple undirected graphs without loops as a reflective subcategory of a presheaf category.

Let $\G$ be the category generated by the diagram
\[ \begin{tikzcd}
    V \ar[r, yshift=1ex, "s"] \ar[r, yshift=-1ex, "t"'] & \ar[l, "r" description] E \ar[loop right, "\sigma"]
\end{tikzcd} \]
subject to the identities
\[ \begin{array}{l l}
    rs = rt = \id[V] & \sigma^2 = \id[E] \\
    \sigma s = t & \sigma t = s \\
    r \sigma = r.
\end{array} \]
We write $\Ghat$ for the functor category $\Set^{\G^\op}$.

For $G \in \Ghat$, we write $G_V$ and $G_E$ for the sets $G(V)$ and $G(E)$, respectively.
Explicitly, such a functor consists of sets $G_V$ and $G_E$ together with the following functions between them
\[ \begin{tikzcd}
    G_V \ar[r, "Gr" description]  &  G_E \ar[l, yshift=1ex, "Gs"'] \ar[l, yshift=-1ex, "Gt"] \ar[loop right, "G\sigma"]
\end{tikzcd} \]
subject to the dual versions of identities in $\G$.

\begin{definition} \label{def:graph}
    A \emph{graph} is a functor $G \in \Ghat$ such that the map $(Gs, Gt) \colon G_E \to G_V \times G_V$ is a monomorphism.
\end{definition}

Let $\Graph$ denote the full subcategory of $\Ghat$ spanned by graphs.

In more concrete terms, a graph $G$ consists of a set $G_V$ of vertices and a set $G_E$ of `half-edges.'
A half-edge $e \in G_E$ has source and target, and these are given by the maps $Gs$ and $Gt$, respectively.
Each half-edge is paired with its other half via the map $G\sigma \colon G_E \to G_E$.
Note that the edges paired by $G \sigma$ have swapped source and target, making the pair (i.e.~whole edge) undirected.
The map $Gr \colon G_V \to G_E$ takes a vertex to an edge whose source and target is that vertex (i.e.~a loop).
Finally, the condition that $(Gs, Gt)$ is a monomorphism ensures that there is at most one (whole) edge between any two vertices.
This is equivalent to specifying a binary ``incidence'' relation on $G_V$ which is reflexive and symmetric.

A map of graphs $f \colon G \to H$ is a natural transformation between such functors.
However, since $(Hs, Ht)$ is a monomorphism, such a map is completely determined by a function $f_V \colon G_V \to H_V$ that preserves incidence relation.

We may therefore assume that our graphs have no loops, but the maps between them, rather than merely preserving edges, are allowed to contract them to a single vertex.
That is, a graph map $f \colon G \to H$ is a function $f \colon G_V \to H_V$ such that if $v, w \in G_V$ are connected by an edge, then either $f(v)$ and $f(w)$ are connected by an edge or $f(v) = f(w)$.

\begin{figure}[ht]
    \centering
    \begin{tikzpicture}[node distance=20pt]
        \node[vertex] (A) {};
        \node[vertex] (B) [below right=of A] {};
        \node[vertex] (C) [below left=of A] {};
        \node[vertex] (D) [below=of A] {};
        \node[vertex] (E) [below left=of C] {};

        \draw (A) to (B) to (D) to (E);
        \draw (A) to (C) to (D);
    \end{tikzpicture}
    \caption{An example depiction of a graph.}
\end{figure}



\begin{proposition} \label{thm:graph_complete} \leavevmode
    \begin{enumerate}
        \item The inclusion $\Graph \ito \Ghat$ admits a left adjoint.
        \item The category $\Graph$ is (co)complete.
        \item The functor $\Graph \to \Set$ mapping a graph $G$ to its set of vertices $G_V$ admits both adjoints.
    \end{enumerate}
\end{proposition}
\begin{proof}
    \begin{enumerate}
        \item The left adjoint is $\im \colon \Ghat \to \Graph$ given by
        \[ (\im G)_V = G_V \quad (\im G)_E = (Gs, Gt) (G_E) \]
        where $(Gs, Gt) (G_E)$ is the image of $G_E$ under the map $(Gs, Gt) \from G_E \to G_V \times G_V$.
        \item The category $\Ghat$ is (co)complete as a presheaf category.
        The conclusion follows from (1) as $\Graph$ is a reflective subcategory of a (co)complete category.
        \item The left adjoint $\Set \to \Graph$ takes a set $A$ to the discrete graph with vertex set $A$.
        The right adjoint takes a set $A$ to the complete graph with vertex set $A$. \qedhere
    \end{enumerate}
\end{proof}
\begin{remark} \label{rem:graph_lim_colim}
    \cref{thm:graph_complete} gives a procedure for constructing limits and colimits in $\Graph$.
    Given a diagram $F \from J \to \Graph$, the set of vertices of $\lim F$ is the limit $\lim UF$ of the diagram $UF \from J \to \Set$.
    The set of edges is the largest set such that the limit projections $\lim F \to F_j$ are graph maps.
    The set of vertices of $\colim F$ is the colimit $\colim UF$ of the diagram $UF \from J \to \Set$ and the set of edges is the smallest set such that the colimit inclusions $F_j \to \colim F$ are graph maps.
\end{remark}

\subsection*{Examples of graphs}
\begin{definition}
    For $m \geq 0$,
    \begin{enumerate}
        \item the \emph{$m$-interval} $I_m$ is the graph which has
        \begin{itemize}
            \item as vertices, integers $0 \leq i \leq m$;
            \item an edge between $i$ and $i+1$.
        \end{itemize}
        \item the \emph{$m$-cycle} $C_m$ is the graph which has
        \begin{itemize}
            \item as vertices, integers $0 \leq i \leq m-1$;
            \item an edge between $i$ and $i + 1$ and an edge between $m-1$ and $0$.
        \end{itemize}
        \item the \emph{infinite interval} $I_\infty$ is the graph which has
        \begin{itemize}
            \item as vertices, integers $i \in \Z$;
            \item an edge between $i$ and $i+1$ for all $i \in \Z$.
        \end{itemize}
    \end{enumerate}
\end{definition}


\begin{figure}[ht]
    \centering
    \begin{tikzpicture}[node distance=20pt]
        \node[vertex, minimum size=12pt] (0) {0};
        \node[vertex, minimum size=12pt] (1) [right=of 0] {1};
        \node[vertex, minimum size=12pt] (2) [right=of 1] {2};
        \node[vertex, minimum size=12pt] (3) [right=of 2] {3};

        \node[vertex, minimum size=12pt] (1c) [right=50pt of 3] {0};
        \node[vertex, minimum size=12pt] (2c) [above right=of 1c] {1};
        \node[vertex, minimum size=12pt] (3c) [below right=of 2c] {2};
        \draw (0) to (1)
              (1) to (2)
              (2) to (3);
        \draw (1c) to (2c) to (3c) to (1c);
    \end{tikzpicture}
    \caption{The graphs $I_3$ and $C_3$, respectively.}
\end{figure}

\begin{remark}
  The graphs $I_0$ and $I_1$ are representable when regarded as functors $\G^\op \to \Set$, represented by $V$ and $E$, respectively.
\end{remark}

For $m \geq 0$, we have a map $l \from I_{m+1} \to I_{m}$ defined by 
\[ l(v) = \begin{cases}
    0 & \text{if } v = 0 \\
    v-1 & \text{otherwise.}
\end{cases} \]
As well, we have a map $r \from I_{m+1} \to I_m$ defined by
\[ r(v) = \begin{cases}
    m & \text{if } v = m + 1 \\
    v & \text{otherwise.}
\end{cases} \]

We write $c \from I_{m+2} \to I_m$ for the composite $lr = rl$.
Explicitly, this map is defined by
\[ c(v) = \begin{cases}
    0 & \text{if } v = 0 \\
    m & \text{if } v = m + 2 \\
    v - 1 & \text{otherwise.}
\end{cases} \]

We show the inclusion $\Graph \ito \Ghat$ preserves filtered colimits and use this to show all finite graphs are compact, i.e.~if $G$ is a finite graph then the functor $\Graph(G, \uvar) \from \Graph \to \Set$ preserves filtered colimits.
\begin{proposition} \label{thm:graph_ghat_colim_filtered}
    The inclusion $\Graph \ito \Ghat$ preserves filtered colimits.
\end{proposition}
\begin{proof}
    Fix a filtered category $J$ and a diagram $D \from J \to \Graph$.
    Let $i$ denote the inclusion $\Graph \ito \Ghat$.
    Recall that $\colim D$ is computed by $\im (\colim (iD))$.
    It suffices to show $\colim (iD) \in \Ghat$ is a graph, since then the unit map $\colim (iD) \to i \im (\colim (iD))$ is an isomorphism $\colim (iD) \cong i (\colim D)$ natural in $D$.

    Let $\lambda \from iD \to \colim(iD)$ denote the colimit cone.
    Suppose two edges $e, e' \in \colim(iD)_E$ have the same source and target.
    Regarding $e, e'$ as maps $I_1 \to \colim(iD)$, these maps factor as
    \[ \begin{tikzcd}
        I_1 \ar[rr, "{e, e'}"] \ar[rd, "{\overline{e}, \overline{e}'}"'] & {} & \colim(iD) \\
        {} & iDx \ar[ur, "\lambda_x"'] & {}
    \end{tikzcd} \]
    for some $x \in J$ since $I_1 \in \Ghat$ is representable.
    Let $s, s' \in (iDx)_V$ denote the sources of $\overline{e}, \overline{e}'$ and $t, t' \in (iDx)_V$ denote the targets, respectively.
    Using an explicit description of the colimit (\cref{rem:graph_lim_colim}), since $\lambda_x(s) = \lambda_x(s')$ and $\lambda_x(t) = \lambda_x(t')$, there exist arrows $f, g \from y \to x$ and $h, k \from z \to x$ with vertices $v \in iDy$ and $w \in iDz$ such that
    \[ \begin{array}{c c c c}
        iDf(v) = s & iDg(v) = s' & iDh(w) = t & iDk(w) = t'.
    \end{array} \]
    As $J$ is filtered, there exists an arrow $l \from x \to w$ in $J$ such that $lf = lg$ and $lh = lk$.
    This implies the edges $iDl(\overline{e}), iDl(\overline{e}') \in (iDw)_E$ have the same source and target.
    As $iDw$ is a graph, it follows that $\overline{e} = \overline{e}'$, thus $e = e'$.
\end{proof}
\begin{corollary} \label{thm:fin_graph_compact}
    For a finite graph $G$, the functor $\Graph(G, \uvar) \from \Graph \to \Set$ preserves filtered colimits.
\end{corollary}
\begin{proof}
    Given a filtered category $J$ and a diagram $D \from J \to \Graph$, we have a natural isomorphism
    \[ \Graph(G, \colim D) \cong \Ghat(G, \colim D) \]
    by \cref{thm:graph_ghat_colim_filtered} since $\Graph$ is a full subcategory.
    This then follows since $G \in \Ghat$ is a finite colimit of representable presheaves. 
\end{proof}

\subsection*{Monoidal structure on the category of graphs}
Define a functor $\gtimes \from \G \times \G \to \Ghat$ by
\[ \begin{array}{c c}
    (V, V) \mapsto I_0 & (V, E) \mapsto I_1 \\
    (E, V) \mapsto I_1 & (E, E) \mapsto C_4 
\end{array} \]
Left Kan extension along the Yoneda embedding yields a monoidal product $\gtimes \from \Ghat \times \Ghat \to \Ghat$.
\[ \begin{tikzcd}
    \G \times \G \ar[d] \ar[r] & \Ghat \\
    \Ghat \times \Ghat \ar[ur, dotted, "\gtimes"']
\end{tikzcd} \]
Note that $\Graph$ is closed with respect to this product i.e.~if $G, H \in \Graph$ then $G \gtimes H \in \Ghat$ is a graph.
Thus, this product descends to a monoidal product $\gtimes \from \Graph \times \Graph \to \Graph$, called the \emph{cartesian product}.

Explicitly, the graph $G \gtimes H$ has
\begin{itemize}
    \item as vertices, pairs $(v, w)$ where $v \in G$ is a vertex of $G$ and $w \in H$ is a vertex of $H$;
    \item an edge from $(v, w)$ to $(v', w')$ if either $v = v'$ and $w$ is connected to $w'$ in $H$ or $w = w'$ and $v$ is connected to $v'$ in $G$. 
\end{itemize}
\begin{definition}
    Let $G$ and $H$ be graphs.
    The graph $\ghom{G}{H}$ has
    \begin{itemize}
        \item as vertices, morphisms $G \to H$ in $\Graph$
        \item an edge from $f$ to $g$ if there exists $H \colon G \gtimes I_1 \to H$ such that $\restr{H}{G \gtimes \{ 0 \}} = f$ and $\restr{H}{G \gtimes \{ 1 \}} = g$.
    \end{itemize}
\end{definition}

This structure makes the category of graphs into a closed symmetric monoidal category.

\begin{proposition}
    $(\Graph, \gtimes, I_0, \ghom{\uvar}{\uvar})$ is a closed symmetric monoidal category. \noproof
\end{proposition}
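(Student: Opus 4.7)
The plan is to verify each piece of the symmetric monoidal closed structure directly from the explicit combinatorial description of $\gtimes$ on $\Graph$. Since $\gtimes$ is defined on $\Ghat$ by left Kan extension along the Yoneda embedding and restricts to $\Graph$, one has that $G \gtimes H$ has vertex set $G_V \times H_V$, with $(v, w)$ adjacent to $(v', w')$ iff either $v = v'$ and $w$ is adjacent to $w'$ in $H$, or $w = w'$ and $v$ is adjacent to $v'$ in $G$. All structure morphisms can therefore be defined and checked at the level of vertices.

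First I would verify the unit and associativity constraints: $I_0$ has a unique vertex, so $(I_0 \gtimes G)_V$ is canonically bijective with $G_V$ with matching edge relation, yielding the left unitor (and symmetrically the right unitor). Both $(G \gtimes H) \gtimes K$ and $G \gtimes (H \gtimes K)$ have vertex set $G_V \times H_V \times K_V$ with identical edge relation --- triples differing in exactly one coordinate by an edge in the corresponding graph --- giving a canonical associator. The braiding $(v, w) \mapsto (w, v)$ is visibly a graph isomorphism, and the pentagon, triangle, and hexagon axioms reduce to elementary tuple identities.

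For closedness, I would construct the adjunction $\Graph(G \gtimes H, K) \iso \Graph(G, \ghom{H}{K})$ via the exponential transpose: a graph map $\phi \from G \gtimes H \to K$ corresponds to $\hat\phi$ with $\hat\phi(v)(w) = \phi(v, w)$. Two verifications are needed: (i) for each vertex $v$ of $G$, the map $w \mapsto \phi(v, w)$ is a graph map $H \to K$, which follows by applying $\phi$ to edges in $G \gtimes H$ with constant first coordinate; and (ii) if $v, v'$ are adjacent in $G$, the assignment $(w, 0) \mapsto \phi(v, w)$, $(w, 1) \mapsto \phi(v', w)$ extends to a graph map $H \gtimes I_1 \to K$, witnessing an edge from $\hat\phi(v)$ to $\hat\phi(v')$ in $\ghom{H}{K}$. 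The reverse transpose $\psi \mapsto \check\psi$ with $\check\psi(v, w) = \psi(v)(w)$ is constructed dually, and naturality in all three arguments is immediate. The only subtlety worth flagging is bookkeeping around the formal loop at each vertex: the edge relation of $G \gtimes H$ admits the degenerate case $(v, w) = (v', w')$, and the edge from a graph map to itself in $\ghom{H}{K}$ is witnessed by the constant map $H \gtimes I_1 \to K$; once this convention is applied uniformly, every coherence check reduces to a finite combinatorial identity.
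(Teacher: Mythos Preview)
Your proposal is correct. Note that the paper does not actually prove this proposition: it is marked with \texttt{\textbackslash noproof} and the result is simply asserted. Your direct verification at the level of vertex sets and edge relations is exactly the standard argument one would give, and all the checks you outline (unitors, associator, braiding, coherence axioms, and the currying bijection for closedness) go through as you describe.
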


\subsection*{Homotopy theory of graphs}
We now review the basics of discrete homotopy theory.
Our treatment is brief and more categorically-oriented, but the reader can find full details in any of the references \cite{malle,barcelo-kramer-laubenbacher-weaver,babson-barcelo-longueville-laubenbacher,barcelo-laubenbacher}.
\begin{definition}
    Let $f, g \from G \to H$ be graph maps.
    An \emph{A-homotopy} (or just a \emph{homotopy}) from $f$ to $g$ is a map $\eta \from G \gtimes I_m \to H$ for some $m \geq 0$ such that $\restr{\eta}{G \gtimes \{ 0 \}} = f$ and $\restr{\eta}{G \gtimes \{ m \}} = g$.
\end{definition}
Note that one needs to allow the parameter $m$ appearing above to vary, as otherwise the notion of homotopy is not transitive.
This is reminiscent of the notion of \emph{Moore path} in topological spaces, which is a path parameterized by an interval of arbitrary length \cite{may,barthel-riehl,berg-garner}.
\begin{proposition}[{\cite[Prop.~2.1]{malle}}]
    For graphs $G$ and $H$, homotopy is an equivalence relation on the set of graphs maps $G \to H$. \noproof
\end{proposition}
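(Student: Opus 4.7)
The plan is to verify the three defining properties of an equivalence relation: reflexivity, symmetry, and transitivity, using the graph maps on the intervals $I_m$ to manipulate homotopies.

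For reflexivity, given a graph map $f \from G \to H$, I would take $m = 0$. Since $I_0$ consists of a single vertex, the monoidal product $G \gtimes I_0$ is isomorphic to $G$, and so $f$ itself may be regarded as a map $G \gtimes I_0 \to H$ satisfying $\restr{f}{G \gtimes \{0\}} = f = \restr{f}{G \gtimes \{0\}}$. Hence $f \rsim f$.

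For symmetry, suppose $\eta \from G \gtimes I_m \to H$ is a homotopy from $f$ to $g$. Define the reversal automorphism $\tau \from I_m \to I_m$ by $\tau(i) = m - i$; this is a graph map because $i$ and $i+1$ are adjacent \iff $m - i$ and $m - (i+1)$ are adjacent. Then $\eta \circ (\id[G] \gtimes \tau) \from G \gtimes I_m \to H$ is a homotopy from $g$ to $f$.

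For transitivity, suppose $\eta \from G \gtimes I_m \to H$ is a homotopy from $f$ to $g$ and $\mu \from G \gtimes I_n \to H$ is a homotopy from $g$ to $h$. Define $\xi \from G \gtimes I_{m+n} \to H$ on vertices by
\[ \xi(v, i) = \begin{cases} \eta(v, i) & \text{if } 0 \leq i \leq m, \\ \mu(v, i - m) & \text{if } m \leq i \leq m + n. \end{cases} \]
The two cases agree when $i = m$ because $\eta(v, m) = g(v) = \mu(v, 0)$. The only nontrivial step is to check that $\xi$ is a graph map; this amounts to checking that if $(v, i)$ and $(v', i')$ are adjacent in $G \gtimes I_{m+n}$ then their images are either equal or adjacent in $H$. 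This is immediate when both $i, i' \leq m$ (using that $\eta$ is a graph map) or both $i, i' \geq m$ (using that $\mu$ is a graph map); the only potential obstacle is the boundary case, which is handled by the agreement $\eta(v, m) = \mu(v, 0)$ and the analogous agreement for $v'$. Restricting $\xi$ to the endpoints then recovers $f$ and $h$, completing the proof.
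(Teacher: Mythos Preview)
Your proof is correct. The paper itself omits the proof entirely (the statement is marked \noproof), treating it as routine; your direct verification of reflexivity, symmetry, and transitivity via the trivial interval, the reversal automorphism of $I_m$, and concatenation along $I_{m+n}$ is exactly the standard argument one would supply.
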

We also define the A-homotopy groups of a graph.
As in topological spaces, this requires the definition of based graph maps and based homotopies between them.
\begin{definition}
    Let $A \ito G$ and $B \ito H$ be graph monomorphisms.
    A \emph{relative graph map}, denoted $(G, A) \to (H, B)$, is a morphism from $A \ito G$ to $B \ito H$ in the arrow category $\arr{\Graph}$, where $[1]$ denotes the poset $\{ 0 \leq 1 \}$ viewed as a category.
\end{definition}
Explicitly, this data consists of maps $G \to H$ and $A \to B$ such that the following square commutes.
\[ \begin{tikzcd}
    A \ar[d, hook] \ar[r] & B \ar[d, hook] \\
    G \ar[r] & H
\end{tikzcd} \]
That is, $A$ is a subgraph of $G$ and the map $A \to B$ is the restriction of the map $G \to H$ to $A$, whose image is contained in the subgraph $B$.

In the context of relative graph maps, we denote a monomorphism $A \ito G$ by $(G, A)$, supressing the data of the map itself.
An exception to this is for monomorphisms of the form $I_0 \to G$. 
This is exactly the data of a pointed graph, which we denote by $(G, v)$, where $v$ is the unique vertex in the image of the map $I_0 \to G$.

For a relative graph map $(f, g) \from (G, A) \to (H, B)$, the map $g$ is uniquely determined by $f$: if a map $h \from A \to B$ also forms a commutative square with $f$ then $g = h$ as the map $B \ito Y$ is monic.
Thus, we denote a relative graph map by $f \from (G, A) \to (H, B)$.
We additionally write $f \from G \to H$ for the bottom map in the square and $\restr{f}{A} \from A \to B$ for the top map.

We define relative homotopies between relative graph maps as follows.
\begin{definition}
    Let $f, g \from (G, A) \to (H, B)$ be relative graph maps.
    A \emph{relative homotopy} from $f$ to $g$ is a relative map $(G \gtimes I_m, A \gtimes I_m) \to (H, B)$ for some $m \geq 0$ such that
    \begin{itemize}
        \item the map $A \gtimes I_m \to B$ is a homotopy from $\restr{f}{A}$ to $\restr{g}{A}$;
        \item the map $G \gtimes I_m \to H$ is a homotopy from $f$ to $g$.
    \end{itemize}
\end{definition}
Explicitly, a relative homotopy from $f$ to $g$ consists of 
\begin{itemize}
    \item a homotopy $\eta \from G \gtimes I_m \to H$ from $f$ to $g$
    \item a homotopy $\restr{\eta}{A} \from A \gtimes I_m \to B$ from $\restr{f}{A}$ to $\restr{g}{A}$
\end{itemize}
such that the following square commutes.
\[ \begin{tikzcd}
    A \gtimes I_m \ar[d, hook] \ar[r, "\restr{\eta}{A}"] & B \ar[d, hook] \\
    G \gtimes I_m \ar[r, "\eta"] & H
\end{tikzcd} \]
\begin{remark}
    For graph maps $f, g \from G \to H$, a path of length $m$ in $\ghom{G}{H}$ from the vertex $f$ to the vertex $g$ is exactly a homotopy $G \gtimes I_m \to H$ from $f$ to $g$.
    For relative graph maps $f, g \from (G, A) \to (H, B)$, a path of length $m$ from $f$ to $g$ in the pullback graph
    \[ \begin{tikzcd}
        \bullet \ar[r] \ar[d] \ar[rd, phantom, "\ulcorner" very near start, yshift=-1ex] & \ghom{A}{B} \ar[d, hook] \\
        \ghom{G}{H} \ar[r] & \ghom{A}{H}
    \end{tikzcd} \]
    is exactly a relative homotopy from $f$ to $g$.
\end{remark}
It follows that relative homotopy is an equivalence relation on relative graph maps.
\begin{proposition}
    Relative homotopy is an equivalence relation on the set of relative graph maps $(G, A) \to (H, B)$. \noproof
\end{proposition}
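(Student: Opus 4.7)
The plan is to piggyback on the (implicit) proof that ordinary A-homotopy is an equivalence relation, and then verify separately that the relevant squares with the subgraph $A$ continue to commute. Concretely, I will exhibit the constant, reverse, and concatenated homotopies at the level of $G \gtimes I_m \to H$, check in each case that the construction restricts to a homotopy $A \gtimes I_m \to B$, and observe that the resulting squares commute by construction.

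For reflexivity, given a relative map $f \from (G, A) \to (H, B)$, take $m = 0$. Since $I_0$ is the terminal graph, $G \gtimes I_0 \iso G$ and $A \gtimes I_0 \iso A$, and the homotopy is simply $f$ together with $\restr{f}{A}$; the required square is the one that defines $f$ as a relative map.

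For symmetry, suppose $\eta \from G \gtimes I_m \to H$ is a relative homotopy from $f$ to $g$, with restriction $\restr{\eta}{A} \from A \gtimes I_m \to B$. The graph map $\varphi \from I_m \to I_m$ sending $i \mapsto m-i$ is a graph isomorphism, so $\id[G] \gtimes \varphi \from G \gtimes I_m \to G \gtimes I_m$ is a graph isomorphism and the composite $\eta \circ (\id[G] \gtimes \varphi)$ is a homotopy from $g$ to $f$. Its restriction to $A \gtimes I_m$ is $\restr{\eta}{A} \circ (\id[A] \gtimes \varphi)$, and the required square commutes because precomposing the original commuting square with $\id \gtimes \varphi$ on both rows yields a commuting square.

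For transitivity, suppose $\eta \from G \gtimes I_m \to H$ is a relative homotopy from $f$ to $g$ and $\theta \from G \gtimes I_n \to H$ is a relative homotopy from $g$ to $h$. Using that $I_{m+n}$ is the pushout of $I_m \leftarrow I_0 \rightarrow I_n$ along the inclusions of the endpoints $m \in I_m$ and $0 \in I_n$, and that $G \gtimes (\uvar)$ preserves this pushout (as a left adjoint, by \cref{thm:graph_complete} and the closed monoidal structure), I get a unique graph map $\eta \cdot \theta \from G \gtimes I_{m+n} \to H$ restricting to $\eta$ on the first $m$ coordinates and to $\theta$ on the last $n$ coordinates; this agrees on the overlap because both send $(v, m)$ to $g(v)$. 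The same pushout argument applied to $A$ yields $\restr{\eta}{A} \cdot \restr{\theta}{A} \from A \gtimes I_{m+n} \to B$, and the required square commutes because it does on each side of the pushout and $G \gtimes I_{m+n}$ is the pushout.

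The only subtlety is the transitivity step: one must know that $G \gtimes (\uvar)$ preserves the pushout describing $I_{m+n}$ and that the resulting function on vertex sets really extends to a graph map — that is, that no new edges of $G \gtimes I_{m+n}$ straddle the gluing index $m$ beyond those already handled by $\eta$ and $\theta$. This follows from the explicit description of edges in $G \gtimes I_k$ (an edge either moves in $G$ at fixed $I_k$-coordinate or moves by one step in $I_k$ at fixed $G$-coordinate), so no edge of $G \gtimes I_{m+n}$ connects vertices with $I_{m+n}$-coordinates differing by more than one, and in particular every such edge lies entirely in one of the two halves. Once this is granted, all three axioms follow cleanly from the non-relative case together with naturality of restriction to $A$.
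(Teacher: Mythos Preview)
The paper states this proposition without proof (the \verb|\noproof| marker simply prints a QED symbol), so there is nothing to compare against. Your argument is correct and is the standard one: reflexivity via $m=0$, symmetry via the reversal automorphism of $I_m$, and transitivity via the pushout decomposition $I_{m+n} \cong I_m \cup_{I_0} I_n$ together with the fact that $G \gtimes (\uvar)$, being a left adjoint in a closed monoidal structure, preserves this pushout. The extra paragraph about edges not straddling the gluing index is redundant once you have invoked the pushout preservation, but it does no harm as a concrete sanity check.
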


Given a relative graph map $f \from (G, A) \to (H, B)$, if the subgraph $B$ consists of a single vertex $v$ then we refer to $f$ as a graph map \emph{based at $v$} or a \emph{based} graph map.
We refer to a homotopy between two such maps as a \emph{based} homotopy.

\begin{definition}
    Let $G$ be a graph and $n \geq 1$.
    For $i = 0, \dots, n$ and $\varepsilon = 0, 1$, a map $f \from \gexp{I_\infty}{n} \to G$ is \emph{stable in direction $(i,\varepsilon)$} if there exists $M \geq 0$ so that for $v_i > M$, we have 
    \[ f(v_1, \dots, v_{i-1}, (2\varepsilon - 1)v_i, v_{i+1} \dots, v_n) = f(v_1, \dots, v_{i-1}, (2\varepsilon - 1)M, v_{i+1}, \dots, v_n). \] 
\end{definition}
In words, a map $f \from \gexp{I_\infty}{n} \to G$ is stable in direction $(i, 0)$ if it becomes constant (with respect to change in the $i$-th coordinate) once the $i$-th coordinate is sufficiently large in the negative direction.
It is stable in direction $(i, 1)$ if it becomes constant (again, with respect to change in the $i$-th coordinate) once the $i$-th coordinate is sufficiently large in the positive direction.

For $n, M \geq 0$, let $\gexp{I_{\geq M}}{n}$ denote the subgraph of $\gexp{I_\infty}{n}$ consisting of vertices $(v_1, \dots, v_n)$ such that $|v_i| \geq M$ for some $i = 1, \dots n$.
Given a based graph map $f \from (\gexp{I_\infty}{n}, \gexp{I_{\geq M}}{n}) \to (G,v)$ we may also regard $f$ as a based graph map $(\gexp{I_\infty}{n}, \gexp{I_{\geq K}}{n}) \to (G,v)$ for any $K \geq M$.
This gives a notion of based homotopy between maps $(\gexp{I_\infty}{n}, \gexp{I_{\geq M}}{n}) \to (G, v)$ which, for some $M \geq 0$, are based at $v$.
\begin{proposition}[{\cite[Prop.~3.2]{malle}}]
    Based homotopy is an equivalence relation on the set of based maps 
    \[ \{ (\gexp{I_\infty}{n}, \gexp{I_{\geq M}}{n}) \to (G, v) \mid M \geq 0 \}. \eqno{\qed} \]
\end{proposition}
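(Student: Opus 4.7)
The plan is to reduce the statement to the previous proposition on relative homotopies by exploiting the reinterpretation remark stated just before: any based map $(\gexp{I_\infty}{n}, \gexp{I_{\geq M}}{n}) \to (G,v)$ may also be viewed as a based map with respect to $\gexp{I_{\geq K}}{n}$ for any $K \geq M$. So, given a finite collection of based maps to compare (one map for reflexivity, two for symmetry, three for transitivity), I would first replace each $M_i$ by $M = \max_i M_i$, after which all the data live in the single set of relative graph maps $(\gexp{I_\infty}{n}, \gexp{I_{\geq M}}{n}) \to (G, \{v\})$. Under this choice, a based homotopy between such maps is exactly a relative homotopy in the sense defined earlier in the section, and the three axioms are inherited from the previous proposition.

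If one prefers to verify the axioms directly, the strategy is routine. \emph{Reflexivity:} for any based map $f$ take $m = 0$ and use the canonical isomorphism $G \gtimes I_0 \iso G$; the map $f$ itself then witnesses $f \sim f$. \emph{Symmetry:} given a based homotopy $\eta \from \gexp{I_\infty}{n} \gtimes I_m \to G$ from $f$ to $g$, precompose with $\id \gtimes r$, where $r \from I_m \to I_m$ is the interval-reversing graph automorphism $i \mapsto m - i$. \emph{Transitivity:} given $\eta_1 \from \gexp{I_\infty}{n} \gtimes I_{m_1} \to G$ from $f$ to $g$ and $\eta_2 \from \gexp{I_\infty}{n} \gtimes I_{m_2} \to G$ from $g$ to $h$, define $\eta \from \gexp{I_\infty}{n} \gtimes I_{m_1 + m_2} \to G$ piecewise by
\[
    \eta(x, i) = \begin{cases} \eta_1(x, i) & \text{if } 0 \leq i \leq m_1, \\ \eta_2(x, i - m_1) & \text{if } m_1 \leq i \leq m_1 + m_2. \end{cases}
\]
The two cases agree at $i = m_1$ since both evaluate to $g(x)$, so $\eta$ is well-defined on vertices; checking edges direction by direction (edges internal to $\gexp{I_\infty}{n}$ are handled by either piece, and the only non-trivial $I$-direction edge is across $i = m_1$, where the two sides again agree) shows that $\eta$ is a graph map. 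Restriction to $\gexp{I_{\geq M}}{n} \gtimes I_{m_1 + m_2}$ is constant at $v$, so $\eta$ is a based homotopy from $f$ to $h$.

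The main obstacle, such as it is, is purely bookkeeping: aligning the possibly different values of $M$ associated to each map in the set, and then confirming that the constructions above preserve both the graph-map condition and the constancy at $v$ on the enlarged subgraph $\gexp{I_{\geq M}}{n} \gtimes I_m$. Both checks are straightforward, which is why the proposition is stated without proof.
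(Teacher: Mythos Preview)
The paper gives no proof of this proposition (it is marked \verb|\noproof|), so there is nothing to compare against; your argument is correct and is precisely the routine verification the authors omit. Both your reduction to the earlier proposition on relative homotopy (after aligning the bounds $M_i$ by passing to $M = \max_i M_i$) and your explicit check of reflexivity, symmetry, and transitivity are standard and work as written.
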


\begin{definition}
    Let $n \geq 0$ and $v \in G$ be a vertex of a graph $G$.
    The $n$-th \emph{A-homotopy group} of $G$ at $v$ is the set of based homotopy classes of maps $(\gexp{I_\infty}{n}, \gexp{I_{\geq M}}{n}) \to (G,v)$ based at $v$ for some $M \geq 0$.
\end{definition}
Let $n \geq 1$ and $i = 1, \dots, n$.
Given $f \from (\gexp{I_\infty}{n}, \gexp{I_{\geq M}}{n}) \to (G, v)$ and $g \from (\gexp{I_\infty}{n}, \gexp{I_{\geq M'}}{n}) \to (G, v)$, we define a binary operation $f \cdot_i g \from (\gexp{I_\infty}{n}, \gexp{I_{\geq M + M'}}{n}) \to (G, v)$ by
\[ (f \cdot_i g)(v_1, \dots, v_n) := \begin{cases}
    f(v_1, \dots, v_n) & v_i \leq M \\
    g(v_1, \dots, v_i - M - M', \dots, v_n) & v_i > M.
\end{cases} \]
This induces a group operation on homotopy groups $\cdot_i \from A_n(G, v) \times A_n(G, v) \to A_n(G, v)$.
For $n \geq 2$ and $1 \leq i < j \leq n$, it is straightforward to construct a homotopy witnessing that
\[ [[f] \cdot_i [g]] \cdot_j [[f] \cdot_i [g]] = [[f] \cdot_j [g]] \cdot_i [[f] \cdot_j [g]] \]
for any $[f], [g] \in A_n(G,v)$.
The Eckmann-Hilton argument gives $[f] \cdot_i [g] = [f] \cdot_j [g]$ and that this operation is abelian.

\subsection*{Path and loop graphs}
\begin{definition}
    For a graph $G$, we define the \emph{path graph} $PG$ to be the induced subgraph of $\ghom{I_\infty}{G}$ consisting of maps which stabilize in the $(1,0)$ and $(1,1)$ directions.
\end{definition}
As vertices of $PG$ are paths that stabilize, we have graph maps $\face{}{1,0}, \face{}{1,1} \from PG \to G$ which send a vertex $v \from I_\infty \to G$ to its left and right endpoints, respectively.

\begin{proposition} \label{thm:path_graph_colim}
    For a graph $G$, we have an isomorphism
    \[ PG \cong \colim \left( \begin{tikzcd}
        \ghom{I_1}{G} \ar[r, "{l^*}"] & \ghom{I_2}{G} \ar[r, "r^*"] & \ghom{I_3}{G} \ar[r, "l^*"] & \dots
    \end{tikzcd} \right) \]
    natural in $G$.
\end{proposition}
\begin{proof}
For each $m \geq 1$, we write $m = 2k + b$ where $k \geq 0$ and $b \in \{0, 1\}$ and define a graph map $I_\infty \to I_m$ by 
\[ v \mapsto \begin{cases}
    0 & v \leq -k \\
    k + v & -k \leq v \leq k+b \\
    m & v \geq k+b.
\end{cases} \]
Geometrically, this function maps the subinterval $[-k, k+b]$ surjectively onto $I_m$, and collapses all other vertices to the endpoints.
By pre-composition, this induces a cone
\[ \begin{tikzcd}
    \ghom{I_1}{G} \ar[r, "l^*"] \ar[rd] & \ghom{I_2}{G} \ar[r, "r^*"] \ar[d] & \ghom{I_3}{G} \ar[r, "l^*"] \ar[ld] & \dots \\
    {} & PG & {} & {}
\end{tikzcd} \]
which one verifies is a colimit cone.
\end{proof}

\begin{definition}
  For a pointed graph $(G, v)$, the \emph{loop graph} $\loopsp(G, v)$ is the subgraph of $PG$ of paths $I_\infty \to G$ whose left and right endpoints are $v$.
\end{definition}
From the definition, it is immediate that
\begin{proposition} \label{thm:graph_loopsp_pb}
  For a pointed graph $(G, v)$, the square
  \[ \begin{tikzcd}
    \loopsp(G, v) \ar[r] \ar[d] \ar[rd, phantom, "\ulcorner" very near start] & PG \ar[d, "{(\face{}{1,0}, \face{}{1,1})}"] \\
    I_0 \ar[r, "{(v, v)}"] & G \times G
  \end{tikzcd} \]
  is a pullback. \noproof
\end{proposition}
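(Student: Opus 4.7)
The plan is to verify the universal property by unwinding the definitions and invoking the explicit description of limits in $\Graph$ from the remark after \cref{thm:graph_complete}. Since $I_0$ is terminal in $\Graph$, the pullback $P := I_0 \pull_{G \times G} PG$ has vertex set in bijection with the set of vertices $\gamma \in (PG)_V$ satisfying $(\face{}{1,0}(\gamma), \face{}{1,1}(\gamma)) = (v, v)$; that is, paths $\gamma \from I_\infty \to G$ stabilizing in both directions with left endpoint $v$ and right endpoint $v$. By definition this coincides with the vertex set of $\loopsp(G, v)$.

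Next I would check the edge sets agree. Since $\loopsp(G, v)$ is by definition a subgraph of $PG$ containing the relevant vertices, the inclusion $\loopsp(G, v) \ito PG$ together with the unique map $\loopsp(G, v) \to I_0$ makes the square commute, so there is an induced map $\loopsp(G, v) \to P$ which is a bijection on vertices. Conversely, since $\loopsp(G, v)$ is defined as the full subgraph of $\ghom{I_\infty}{G}$ on the relevant paths (inheriting all edges between such paths from $PG$), and since the edge set of $P$ is by construction the largest edge set on the vertex set above that makes both projections graph maps, the two agree.

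I do not anticipate obstacles; the statement is essentially a restatement of the definition of $\loopsp(G, v)$, which was arranged precisely so that this square is cartesian. The only mild point is that the pullback in $\Graph$ is computed in two steps (vertices in $\Set$, then edges maximally), but the fullness of $\loopsp(G, v) \ito PG$ ensures these steps recover exactly the loop graph.
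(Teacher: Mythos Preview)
Your proposal is correct and matches the paper's approach: the paper states the proposition without proof (marked \noproof), regarding it as immediate from the definition of $\loopsp(G,v)$ as the (full) subgraph of $PG$ on paths with both endpoints $v$. Your unwinding of the vertex and edge sets via the description of limits in $\Graph$ is exactly the verification one would supply if pressed, and the key observation---that the projection to $I_0$ imposes no edge constraint, so the pullback is the full subgraph of $PG$ on the appropriate vertices---is the heart of why the statement is immediate.
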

The loop graph of a pointed graph $(G, v)$ has a distinguished vertex which is the constant path at $v$.
This gives an endofunctor $\loopsp \from \Graph_* \to \Graph_*$.
From this, we define the notion of $n$-th loop graphs.
\begin{definition}
    For $n \geq 0$, we define the \emph{$n$-th loop graph} to be
    \[ \loopsp[n](X,x) := \begin{cases}
        (X,x) & n = 0 \\
        \loopsp(\loopsp[n-1](X,x)) & \text{otherwise.}
    \end{cases} \]
\end{definition}
In \cite{babson-barcelo-longueville-laubenbacher}, it is shown that the $n$-th homotopy groups of a graph correspond to the connected components of $\loopsp[n](G,v)$.
\begin{proposition}[{\cite[Prop.~7.4]{babson-barcelo-longueville-laubenbacher}}]
    For $n \geq 0$, we have an isomorphism
    \[ A_n(G, v) \cong A_0(\loopsp[n](G, v)). \eqno{\qed} \]
\end{proposition}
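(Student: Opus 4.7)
The approach is induction on $n$. The case $n=0$ is immediate as $\loopsp[0](G,v) = (G,v)$. For the inductive step it suffices to construct a natural bijection
\[ A_n(G,v) \iso A_{n-1}(\loopsp(G,v), c_v), \]
where $c_v$ denotes the constant loop at $v$.

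The tool is the tensor--hom adjunction $(I_\infty \gtimes \uvar) \adj \ghom{I_\infty}{\uvar}$. Given a based map $f \from (\gexp{I_\infty}{n}, \gexp{I_{\geq M}}{n}) \to (G,v)$, I would single out the first coordinate to write $\gexp{I_\infty}{n} \iso I_\infty \gtimes \gexp{I_\infty}{n-1}$, and take the transpose $\tilde f \from \gexp{I_\infty}{n-1} \to \ghom{I_\infty}{G}$. I would then verify that $\tilde f$ factors through $\loopsp(G,v) \ito \ghom{I_\infty}{G}$: the basepoint condition forces $f(v_1, \dots, v_n) = v$ whenever $|v_1| \geq M$, which in particular implies $f$ is stable in directions $(1,0)$ and $(1,1)$, so each path $v_1 \mapsto f(v_1, v_2, \dots, v_n)$ lies in $PG$, and the same condition ensures both of its endpoints equal $v$. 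The remaining stability conditions in directions $(i, \varepsilon)$ for $i \geq 2$ and the basepoint condition when $|v_i| \geq M$ with $i \geq 2$ translate, under the transposition, into stability of $\tilde f$ in directions $(i-1, \varepsilon)$ and the condition that $\tilde f$ equals $c_v$ on $\gexp{I_{\geq M}}{n-1}$. Hence $\tilde f$ is a based map $(\gexp{I_\infty}{n-1}, \gexp{I_{\geq M}}{n-1}) \to (\loopsp(G,v), c_v)$; inverse transposition shows this correspondence is bijective.

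Applying the same currying procedure to a based homotopy $\eta \from (\gexp{I_\infty}{n} \gtimes I_m, \gexp{I_{\geq M}}{n} \gtimes I_m) \to (G, v)$ produces a based homotopy between the transposes valued in $(\loopsp(G,v), c_v)$, and conversely. Hence the bijection on maps descends to a bijection on based homotopy classes, yielding $A_n(G,v) \iso A_{n-1}(\loopsp(G,v), c_v)$. Iterating $n$ times gives $A_n(G,v) \iso A_0(\loopsp[n](G,v))$.

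The main obstacle is coordinating the stability and basepoint conditions: neither alone suffices to land $\tilde f$ in $\loopsp(G,v)$ based at $c_v$, and one must check that the same stabilization bound $M$ carries over through currying. None of this is deep, but it requires careful bookkeeping across the $2n$ stability conditions and the definition of $\gexp{I_{\geq M}}{n}$.
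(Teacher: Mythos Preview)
Your argument is correct and is the standard currying argument for identifying homotopy groups with connected components of iterated loop spaces. Note, however, that the paper does not supply its own proof of this proposition: it is stated with a citation to \cite[Prop.~7.4]{babson-barcelo-longueville-laubenbacher} and marked \verb|\noproof|, so there is nothing in the present paper to compare against. Your sketch is essentially the argument one would expect the cited reference to contain, and the bookkeeping you flag (that the single bound $M$ transfers through the adjunction and simultaneously handles stability in the first coordinate, the landing in $\loopsp(G,v)$, and the basepoint condition on the remaining coordinates) is indeed the only place one has to be careful.
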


\section{Cubical sets and their homotopy theory} \label{sec:cset}

\subsection*{Cubical sets}
We begin by defining the box category $\Box$.
As used in this paper, the box category will include both positive and negative connections.
This variant of the box category was introduced in \cite{brown-higgins:algebra-of-cubes} and was later used in \cite{doherty-kapulkin-lindsey-sattler} to model $(\infty, 1)$-categories.
The objects of $\Box$ are posets of the form $[1]^n = \{ 0 \leq 1 \}^n$ and the maps are generated (inside the category of posets) under composition by the following four special classes:
\begin{itemize}
  \item \emph{faces} $\partial^n_{i,\varepsilon} \colon [1]^{n-1} \to [1]^n$ for $i = 1, \ldots , n$ and $\varepsilon = 0, 1$ given by:
  \[ \partial^n_{i,\varepsilon} (x_1, x_2, \ldots, x_{n-1}) = (x_1, x_2, \ldots, x_{i-1}, \varepsilon, x_i, \ldots, x_{n-1})\text{;}  \]
  \item \emph{degeneracies} $\sigma^n_i \colon [1]^n \to [1]^{n-1}$ for $i = 1, 2, \ldots, n$ given by:
  \[ \sigma^n_i ( x_1, x_2, \ldots, x_n) = (x_1, x_2, \ldots, x_{i-1}, x_{i+1}, \ldots, x_n)\text{;}  \]
  \item \emph{negative connections} $\gamma^n_{i,0} \colon [1]^n \to [1]^{n-1}$ for $i = 1, 2, \ldots, n-1$ given by:
  \[ \gamma^n_{i,0} (x_1, x_2, \ldots, x_n) = (x_1, x_2, \ldots, x_{i-1}, \max\{ x_i , x_{i+1}\}, x_{i+2}, \ldots, x_n) \text{.} \]
  \item \emph{positive connections} $\gamma^n_{i,1} \colon [1]^n \to [1]^{n-1}$ for $i = 1, 2, \ldots, n-1$ given by:
  \[ \gamma^n_{i,1} (x_1, x_2, \ldots, x_n) = (x_1, x_2, \ldots, x_{i-1}, \min\{ x_i , x_{i+1}\}, x_{i+2}, \ldots, x_n) \text{.} \]
\end{itemize}

These maps obey the following \emph{cubical identities}:

\[ \begin{array}{l l}
    \partial_{j, \varepsilon'} \partial_{i, \varepsilon} = \partial_{i+1, \varepsilon} \partial_{j, \varepsilon'} \quad \text{for } j \leq i; & 
    \sigma_j \partial_{i, \varepsilon} = \begin{cases}
        \partial_{i-1, \varepsilon} \sigma_j & \text{for } j < i; \\
        \id                                                       & \text{for } j = i; \\
        \partial_{i, \varepsilon} \sigma_{j-1} & \text{for } j > i;
    \end{cases} \\
    \sigma_i \sigma_j = \sigma_j \sigma_{i+1} \quad \text{for } j \leq i; &
    \gamma_{j,\varepsilon'} \gamma_{i,\varepsilon} = \begin{cases}
    \gamma_{i,\varepsilon} \gamma_{j+1,\varepsilon'} & \text{for } j > i; \\
    \gamma_{i,\varepsilon}\gamma_{i+1,\varepsilon} & \text{for } j = i, \varepsilon' = \varepsilon;
    \end{cases} \\
    \gamma_{j,\varepsilon'} \partial_{i, \varepsilon} = \begin{cases} 
        \partial_{i-1, \varepsilon} \gamma_{j,\varepsilon'}   & \text{for } j < i-1 \text{;} \\
        \id                                                         & \text{for } j = i-1, \, i, \, \varepsilon = \varepsilon' \text{;} \\
        \partial_{j, \varepsilon} \sigma_j         & \text{for } j = i-1, \, i, \, \varepsilon = 1-\varepsilon' \text{;} \\
        \partial_{i, \varepsilon} \gamma_{j-1,\varepsilon'} & \text{for } j > i;
    \end{cases} &
    \sigma_j \gamma_{i,\varepsilon} = \begin{cases}
        \gamma_{i-1,\varepsilon} \sigma_j  & \text{for } j < i \text{;} \\
        \sigma_i \sigma_i           & \text{for } j = i \text{;} \\
        \gamma_{i,\varepsilon} \sigma_{j+1} & \text{for } j > i \text{.} 
    \end{cases}
\end{array} \]

This category enjoys many good properties, making it suitable for modelling homotopy theory.
Formally speaking, these properties can be encapsulated in saying that $\boxcat$ is an Eilenberg--Zilber category, which in particular implies that the object $[1]^n$ has no non-identity automorphisms.
However, we will not explicitly rely on the notion of Eilenberg--Zilber categories.

A \emph{cubical set} is a presheaf $X \colon \boxcat^\op \to \Set$.
A \emph{cubical map} is a natural transformation of such presheaves.
We write $\cSet$ for the category of cubical sets and cubical maps.

Given a cubical set $X$, we write $X_n$ for the value of $X$ at $[1]^n$ and refer to the elements of $X_n$ as \emph{$n$-cubes} of $X$.
We write cubical operators on the right, e.g.~given an $n$-cube $x \in X_n$ of $X$, we write $x\face{}{1,0}$ for the $\face{}{1,0}$-face of $x$.

By a \emph{degenerate cube}, we always mean a cube that is in the image of a degeneracy or a connection map.
This nomenclature is borrowed from the theory of Reedy categories, where one can speak abstractly of degenerate elements in a presheaf (\cite{riehl-verity:theory-and-practice-of-reedy-categories}, \cite[\S 5.2]{hovey}, \cite[Ch.~15]{hirschhorn}).
\begin{definition}
    Let $n \geq 0$.
    \begin{itemize}
        \item The \emph{combinatorial $n$-cube} $\cube{n}$ is the representable functor $\boxcat(-, [1]^n) \from \boxcat^\op \to \Set$;
        \item The \emph{boundary of the $n$-cube} $\bd \cube{n}$ is the subobject of $\cube{n}$ defined by
        \[ \bd \cube{n} := \bigcup\limits_{\substack{j=0,\dots,n \\ \eta = 0, 1}} \image \face{}{j,\eta}. \]
        \item When $n \geq 1$, given $i = 0, \dots, n$ and $\varepsilon = 0, 1$, the \emph{$(i,\varepsilon)$-open box} $\dfobox$ is the subobject of $\bd \cube{n}$ defined by
        \[ \dfobox := \bigcup\limits_{(j,\eta) \neq (i,\varepsilon)} \image \face{}{j,\eta}. \]
    \end{itemize}
\end{definition}
Observe $\cube{0} \in \cSet$ is the terminal object in $\cSet$.

\begin{example}
    Define a functor $\boxcat \to \Top$ from the box category to the category of topological spaces which sends $[1]^n$ to $[0, 1]^n$ where $[0, 1]$ is the unit interval.
    The face and degeneracy maps are sent to the face inclusion and product projection maps, respectively.
    The negative connection $\conn{}{i,0}$ is sent to the map $[0, 1]^n \to [0, 1]^{n-1}$ defined by
    \[ (x_1, \dots, x_n) \mapsto (x_1, \dots, x_{i-1}, \max(x_i, x_{i+1}), x_{i+2}, \dots, x_n) \]
    and the image of the positive connection $\conn{}{i,1}$ is defined analogously.

    Left Kan extension along the Yoneda embedding gives the \emph{geometric realization} functor $\reali{\uvar} \from \cSet \to \Top$.
    \[ \begin{tikzcd}[column sep = large]
        \boxcat \ar[r, "{[1]^n \mapsto [0, 1]^n}"] \ar[d] & \Top \\
        \cSet \ar[ur, "\reali{\uvar}"']
    \end{tikzcd} \]
    This functor is left adjoint to the \emph{singular cubical complex} functor $\sing{} \from \Top \to \cSet$ defined by
    \[ (\sing{S})_n := \Top ([0, 1]^n, S). \]
\end{example}

Define a functor $\gprod \from \boxcat \times \boxcat \to \boxcat$ on the cube category which sends $([1]^m, [1]^n)$ to $[1]^{m+n}$.
Postcomposing with the Yoneda embedding and left Kan extending gives a monoidal product on cubical sets.
\[ \begin{tikzcd}
    \boxcat \times \boxcat \ar[r] \ar[d] & \boxcat \ar[r] & \cSet \\
    \cSet \times \cSet \ar[urr, "\gprod"']
\end{tikzcd} \]
This is the \emph{geometric product} of cubical sets.
Although, for $[1]^m, [1]^n \in \boxcat$, there is an isomorphism $[1]^m \gprod [1]^n \cong [1]^n \gprod [1]^m$, this isomorphism is not natural.
As a result, the geometric product of cubical sets is not symmetric, i.e.~$X \gprod Y$ is not in general isomorphic to $Y \gprod X$.

This product is however biclosed. 
For a cubical set $X$, we write $\lhom(X, \uvar) \from \cSet \to \cSet$ and $\rhom (X, \uvar) \from \cSet \to \cSet$ for the right adjoints to the functors $\uvar \gprod X$ and $X \gprod \uvar$, respectively.
As the geometric product is not symmetric, the functors $\lhom(X, -)$ and $\rhom(X, -)$ are not naturally isomorphic. 


\subsection*{Kan complexes}
\begin{definition}\leavevmode
    \begin{enumerate}
        \item A cubical map $X \to Y$ is a \emph{Kan fibration} if it has the right lifting property with respect to open box inclusions.
        That is, if for any commutative square,
        \[ \begin{tikzcd}
            \dfobox \ar[d, hook] \ar[r] & X \ar[d, "f"] \\
            \cube{n} \ar[r] & Y
        \end{tikzcd} \]
        there exists a map $\cube{n} \to X$ so that the triangles
        \[ \begin{tikzcd}
            \dfobox \ar[d, hook] \ar[r] & X \ar[d, "f"] \\
            \cube{n} \ar[ur, dotted] \ar[r] & Y
        \end{tikzcd} \]
        commutes.
        \item A cubical set $X$ is a \emph{Kan complex} if the unique map $X \to \cube{0}$ is a Kan fibration.
    \end{enumerate}
\end{definition}
We write $\Kan$ for the full subcategory of $\cSet$ consisting of Kan complexes.
\begin{example} \label{ex:space-kan}
    For any $S \in \Top$, the cubical set $\sing{S}$ is a Kan complex.
    A map $\dfobox \to \sing{S}$ is, by adjointness, a map $\reali{\dfobox} \to S$.
    The inclusion $\reali{\dfobox} \ito \reali{\cube{n}}$ has a retract in $\Top$.
    Pre-composing with this retract gives a map $\reali{\cube{n}} \to S$ which restricts to the open box map $\reali{\dfobox} \to S$.
    \[ \begin{tikzcd}
        \reali{\dfobox} \ar[d, hook] \ar[r] & S \\
        \reali{\cube{n}} \ar[u, bend right]
    \end{tikzcd} \]
    By adjointess, this gives a suitable map $\cube{n} \to \sing{S}$.
\end{example}
\begin{definition}
    A map $f \from X \to Y$ is a \emph{weak equivalence} if the map $\reali{f} \from \reali{X} \to \reali{Y}$ is a weak homotopy equivalence, i.e.~for any $n \geq 0$ and $x \in \reali{X}$, the map $\pi_n \reali{f} \from \pi_n(\reali{X},x) \to \pi_n(\reali{Y}, \reali{f}(x))$ is an isomorphism.
\end{definition}
We move towards describing the fibration category of Kan complexes.
\begin{definition}[{\cite[Def.~1.1]{brown}}]
    A \emph{fibration category} is a category $\cat{C}$ with two subcategories of \emph{fibrations} and \emph{weak equivalences} such that (in what follows, an \emph{acyclic fibration} is a map that is both a fibration and a weak equivalence):
    \begin{enumerate}
     \item weak equivalences satisfy two-out-of-three property; that is, given two composable morphisms:
     \[ X \overset{f}\longrightarrow Y \overset{g}\longrightarrow Z \]
     if two of $f, g, gf$ are weak equivalences then all three are;
     \item all isomorphisms are acyclic fibrations;
     \item pullbacks along fibrations exist; fibrations and acyclic fibrations are stable under pullback;
     \item $\cat{C}$ has a terminal object 1; the canonical map $X \to 1$ is a fibration for any object $X \in \cat{C}$ (that is, all objects are \emph{fibrant});
     \item every map can be factored as a weak equivalence followed by a fibration.
    \end{enumerate}
\end{definition}
\begin{example}[{\cite[Thm.~2.4.19]{hovey}}]
    The category $\Top$ of topological spaces is a fibration where
    \begin{itemize}
        \item fibrations are Serre fibrations;
        \item weak equivalences are weak homotopy equivalences; i.e.~maps $f \from S \to S'$ such that, for all $s \in S$ and $n \geq 0$, the map $\pi_n f \from \pi_n(S,s) \to \pi_n(S', f(s))$ is an isomorphism.
    \end{itemize}
\end{example}
\begin{definition}
    A functor $F \from \cat{C} \to \cat{D}$ between fibration categories is \emph{exact} if it preserves fibrations, acyclic fibrations, pullbacks along fibrations, and the terminal object.
\end{definition}
Given a fibration category $\cat{C}$ with finite coproducts and a terminal object, the category of pointed objects $1 \downarrow \cat{C}$ is a fibration category as well.
\begin{proposition} \label{thm:fib_pt}
    Let $\cat{C}$ be a fibration category with finite coproducts and a terminal object $1 \in \cat{C}$.
    \begin{enumerate}
        \item The slice category $1 \downarrow \cat{C}$ under 1 is a fibration category where a map is a fibration/weak equivalence if the underlying map in $\cat{C}$ is;
        \item the projection functor $1 \downarrow \cat{C} \to \cat{C}$ is exact.
    \end{enumerate} 
\end{proposition}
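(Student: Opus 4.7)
The plan is to verify each axiom of a fibration category for $1 \downarrow \cat{C}$ directly, exploiting the fact that the projection $U \from 1 \downarrow \cat{C} \to \cat{C}$ reflects and preserves both fibrations and weak equivalences by construction.

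Axioms (1) and (2) follow immediately from the corresponding axioms in $\cat{C}$: two-out-of-three for weak equivalences lifts to the slice because $U$ reflects weak equivalences, and any isomorphism in $1 \downarrow \cat{C}$ projects to an isomorphism, hence an acyclic fibration, in $\cat{C}$. For axiom (4), the terminal object of $1 \downarrow \cat{C}$ is $\id[1] \from 1 \to 1$; the unique map from a pointed object $(X, x \from 1 \to X)$ to it has underlying map $X \to 1$, which is a fibration since every object of $\cat{C}$ is fibrant.

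For axiom (3), given a fibration $f \from (Y, y) \to (Z, z)$ and any morphism $g \from (X, x) \to (Z, z)$, form the pullback $P := Y \times_Z X$ in $\cat{C}$, which exists because $Uf$ is a fibration. Since $Ufy = z = Ugx$, the universal property of $P$ yields a unique map $1 \to P$ compatible with the basepoints, making $P$ an object of $1 \downarrow \cat{C}$. A routine check verifies that this is the pullback in the slice and that the pulled-back map is a fibration (resp.\ acyclic fibration) because its underlying map in $\cat{C}$ is. For axiom (5), factor the underlying map of a morphism $f \from (X, x) \to (Y, y)$ as $X \weto W \fto Y$ in $\cat{C}$; composing $x$ with this weak equivalence endows $W$ with a basepoint $1 \to W$, and this produces the desired factorization inside the slice.

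Exactness of $U$ is then immediate: it preserves fibrations and weak equivalences by definition, sends the terminal object $\id[1]$ to $1$, and preserves pullbacks along fibrations because, as just observed, these are constructed in $\cat{C}$ and then equipped with canonically induced basepoints. The only step requiring mild care is verifying that the pullback in the slice is correctly built from the $\cat{C}$-pullback together with the induced basepoint; this amounts to combining the universal properties of the under-category and of the pullback in $\cat{C}$, and involves no real obstacle.
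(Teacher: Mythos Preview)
Your proof is correct and essentially complete; each axiom is handled appropriately, and your argument for exactness via the explicit construction of pullbacks in the slice is sound.

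The paper takes a different and somewhat slicker route. For part (1) it simply cites \cite[Prop.~1.1.8]{hovey} rather than verifying the axioms by hand. For part (2), instead of arguing directly that pullbacks in $1 \downarrow \cat{C}$ are computed in $\cat{C}$ and then equipped with induced basepoints, the paper observes that the projection $U$ is right adjoint to the disjoint-basepoint functor $(-) \sqcup 1 \from \cat{C} \to 1 \downarrow \cat{C}$, and hence preserves all limits. This is where the hypothesis of finite coproducts is used. Your direct argument, by contrast, never invokes coproducts at all, so it actually establishes the proposition under weaker hypotheses. The trade-off is that the paper's proof is a two-line appeal to general principles, while yours is a self-contained verification; both are entirely standard.
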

\begin{proof}
    The first statement follows from \cite[Prop.~1.1.8]{hovey}.

    For the second statement, the projection functor preserves fibrations/weak equivalences by definition.
    It is a right adjoint to the functor $\uvar \sqcup 1 \from \cat{C} \to 1 \downarrow \cat{C}$ which adds a disjoint basepoint, hence preserves finite limits.
\end{proof}

\begin{theorem} \label{thm:kan_fib} \leavevmode
    \begin{enumerate}
        \item The category $\Kan$ of Kan complexes is a fibration category where fibrations are Kan fibrations and weak equivalences are as defined above.
        \item $\sing{} \from \Top \to \Kan$ is an exact functor
        \item The category $\Kan_*$ of pointed Kan complexes is a fibration category where a map is a fibration/weak equivalence if the underlying map in $\Kan$ is.
    \end{enumerate}
\end{theorem}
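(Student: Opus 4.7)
My plan is to verify (1), (2), (3) in turn. Part (3) is essentially free: coproducts in $\cSet$ are computed as disjoint unions, and these preserve the Kan property because each $\dfobox$ is connected, so \cref{thm:fib_pt} applies directly. The bulk of the work is in (1).

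For (1), I would first dispatch the easy axioms. Two-out-of-three for weak equivalences is inherited from $\Top$ via $\reali{\uvar}$. Every isomorphism of cubical sets trivially admits lifts against any diagram and becomes an isomorphism (hence a weak homotopy equivalence) under realization, so isomorphisms are acyclic fibrations. The cubical set $\cube{0}$ is terminal in $\cSet$ and lies in $\Kan$ (every open box admits a unique map to $\cube{0}$), and $X \to \cube{0}$ is a Kan fibration for every Kan complex $X$ by definition. For stability of Kan fibrations under pullback, form the pullback $P = X \pull_Z Y$ in $\cSet$ of a Kan fibration $f \from X \to Z$ along a map between Kan complexes $Y \to Z$; a routine diagram chase shows that $P \to Y$ inherits the right lifting property against $\dfobox \ito \cube{n}$, and composing with $Y \to \cube{0}$ shows $P \in \Kan$.

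The substantive axioms are the factorization property and stability of acyclic fibrations under pullback. For factorization, I would apply Quillen's small object argument to the set $\{\dfobox \ito \cube{n}\}$ to factor any $f \from X \to Y$ in $\Kan$ as $X \xrightarrow{i} Z \xrightarrow{p} Y$ with $p$ a Kan fibration and $i$ a transfinite composite of pushouts of open box inclusions; since $Y \in \Kan$ and Kan fibrations compose, $Z \in \Kan$ as well. To see $i$ is a weak equivalence, note that $\reali{\uvar}$ preserves colimits and each $\reali{\dfobox} \ito \reali{\cube{n}}$ is a strong deformation retract, hence an acyclic cofibration of spaces; thus $\reali{i}$ is an acyclic cofibration in $\Top$ and in particular a weak homotopy equivalence. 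For stability of acyclic fibrations under pullback, I would use the characterization of acyclic Kan fibrations as maps with the right lifting property against the boundary inclusions $\bdcube{n} \ito \cube{n}$ coming from the Cisinski model structure on $\cSet$; right lifting properties are preserved by pullback in any category, so the stability is automatic.

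For (2), $\sing{S}$ lies in $\Kan$ by \cref{ex:space-kan}, so $\sing{}$ factors through $\Kan$; as a right adjoint, it preserves terminal objects and pullbacks. A Serre fibration $p$ is sent to a Kan fibration because, under the adjunction, a lifting problem against $\dfobox \ito \cube{n}$ transposes to one against $\reali{\dfobox} \ito \reali{\cube{n}}$, which is an anodyne cofibration and so lifts against any Serre fibration. For weak equivalences, naturality of the counit $\reali{\sing{\uvar}} \to \id$ together with the classical fact that each counit component is a weak homotopy equivalence gives, via two-out-of-three, that $\reali{\sing{f}}$ is a weak homotopy equivalence whenever $f$ is. The main obstacle I anticipate is the characterization of acyclic Kan fibrations needed for pullback stability of weak equivalences in (1); the cleanest path is to invoke the Cisinski model structure on $\cSet$ rather than redevelop the theory by hand.
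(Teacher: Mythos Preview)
Your proposal is correct, but it is far more self-contained than the paper's treatment. For (1), the paper simply cites the companion paper \cite[Thm.~2.16]{carranza-kapulkin:homotopy-cubical} and moves on; you instead verify the fibration category axioms directly, invoking the small object argument for factorization and the Cisinski model structure on $\cSet$ for the characterization of acyclic fibrations. This buys you independence from the companion paper at the cost of importing a heavier external input (the full model structure), though the paper itself leans on \cite{doherty-kapulkin-lindsey-sattler} elsewhere, so this is not out of spirit. For (2), the paper argues that $\sing{}$ preserves fibrations and acyclic fibrations by the adjunction trick ``analogous to \cref{ex:space-kan}''---i.e., transposing lifting problems against $\dfobox \ito \cube{n}$ and $\bdcube{n} \ito \cube{n}$ respectively---whereas you handle weak equivalences separately via the counit $\reali{\sing{\uvar}} \to \id$ being a weak homotopy equivalence; both routes work, and yours has the mild advantage of not needing the boundary-inclusion characterization of acyclic Kan fibrations at this step. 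For (3), both you and the paper invoke \cref{thm:fib_pt}; your observation that coproducts of Kan complexes are Kan (since each $\dfobox$ is connected) fills a hypothesis the paper leaves implicit.
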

\begin{proof}
    \begin{enumerate}
        \item This is shown in \cite[Thm.~2.17]{carranza-kapulkin:homotopy-cubical}.
        \item This is \cite[Cor.~2.25]{carranza-kapulkin:homotopy-cubical}.
        \item Follows from \cref{thm:fib_pt}. \qedhere
    \end{enumerate}
\end{proof}

\subsection*{Anodyne maps}
We move towards defining anodyne maps of cubical sets; that is, maps which are both monomorphisms and weak equivalences.
\begin{definition}
    A class of morphisms $S$ in a cocomplete category $\cat{C}$ is \emph{saturated} if it is closed under
    \begin{itemize}
        \item pushouts: if $s \from A \to B$ is in $S$ and $f \from A \to C$ is any map then the pushout $C \to B \push C$ of $s$ along $f$ is in $S$;
        \[ \begin{tikzcd}
            A \ar[r, "f"] \ar[d, "s"'] & C \ar[d, "\in {S}"] \\
            B \ar[r] & B \push C
        \end{tikzcd} \]
        \item retracts: if $s \from A \to B$ is in ${S}$ and $r \from C \to D$ is a retract of $s$ in $\arr{\cat{C}}$ then $r$ is in ${S}$;
        \[ \begin{tikzcd}
            A \ar[d, "r"'] \ar[r] \ar[rr, bend left, "{\id[A]}"] & B \ar[d, "s"] \ar[r] & A \ar[d, "r"] \\
            C \ar[r] \ar[rr, bend right, "{\id[C]}"] & D \ar[r] & C
        \end{tikzcd} \]
        \item transfinite composition: given a limit ordinal $\lambda$ and a diagram $\lambda \to \cat{C}$ whose morphisms lie in $S$,
        \[ \begin{tikzcd}
            A_1 \ar[r, "s_1"] & A_2 \ar[r, "s_2"] & A_3 \ar[r, "s_3"] & \dots
        \end{tikzcd} \]
        writing $A$ for the colimit of this diagram, the components of the colimit cone $\lambda_i \from A_i \to A$ are in ${S}$.
        \[ \begin{tikzcd}
            A_1 \ar[r] \ar[dr, "\lambda_1"'] & A_2 \ar[r] \ar[d, "\lambda_2" description, near start] & A_3 \ar[r] \ar[dl, "\lambda_3"] & \dots \ar[dll, bend left, "\dots" description, near start] \\
            & A
        \end{tikzcd} \]
    \end{itemize}
\end{definition}
\begin{definition}
    For a set of morphisms $S$ in a cocomplete category $\cat{C}$, the \emph{saturation} $\sat{S}$ of $S$ is the smallest saturated class containing $S$.
\end{definition}
For a saturated class of maps, closure under pushouts and transfinite composition gives that
\begin{proposition}[{\cite[Lem.~2.1.13]{hovey}}] \label{thm:sat_coprod}
    Saturated classes of maps are closed under coproduct.
    That is, given a collection $\{s_i \from A_i \to B_i \mid i \in I \}$ of morphisms in a saturated class ${S}$, the coproduct
    \[ \coprod\limits_{i \in I} s_i \from \coprod\limits_{i \in I} A_i \to \coprod\limits_{i \in I} B_i \]
    is in ${S}$. \noproof
\end{proposition}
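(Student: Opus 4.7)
The plan is to well-order the index set $I$ and construct a transfinite sequence interpolating between $\coprod_{i \in I} A_i$ and $\coprod_{i \in I} B_i$, each successor step of which is a pushout of some $s_i$. Fix a well-ordering identifying $I$ with an ordinal $\lambda$. For each $\alpha \leq \lambda$, define
\[ X_\alpha := \coprod_{i < \alpha} B_i \sqcup \coprod_{i \geq \alpha} A_i, \]
so that $X_0 = \coprod_{i \in I} A_i$ and $X_\lambda = \coprod_{i \in I} B_i$. For $\alpha \leq \alpha' \leq \lambda$, the canonical map $X_\alpha \to X_{\alpha'}$ acts as the identity on each $B_i$ with $i < \alpha$, as $s_i$ on $A_i$ for $\alpha \leq i < \alpha'$, and as the identity on $A_i$ for $i \geq \alpha'$. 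These maps assemble into a $\lambda$-indexed sequence whose total composite is precisely $\coprod_{i \in I} s_i$.

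At each successor stage $\alpha + 1 \leq \lambda$, the map $X_\alpha \to X_{\alpha+1}$ only modifies the $\alpha$-th summand, and fits into a pushout square
\[ \begin{tikzcd}
  A_\alpha \ar[r] \ar[d, "s_\alpha"'] & X_\alpha \ar[d] \\
  B_\alpha \ar[r] & X_{\alpha+1}
\end{tikzcd} \]
whose horizontal maps are the coproduct inclusions. Closure under pushouts thus places $X_\alpha \to X_{\alpha+1}$ in $S$. For a limit ordinal $\beta \leq \lambda$, I claim that $X_\beta = \colim_{\alpha < \beta} X_\alpha$: each $B_i$ with $i < \beta$ appears in $X_\alpha$ for every $\alpha > i$, each $A_i$ with $i < \beta$ is identified with $B_i$ via $s_i$ once $\alpha > i$, and the summands $A_j$ with $j \geq \beta$ are stable across the sequence.

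Having verified both the successor and limit conditions, closure under transfinite composition then yields that the total composite $X_0 \to X_\lambda$, namely $\coprod_{i \in I} s_i$, lies in $S$. The main conceptual step is recognizing how to decompose a coproduct as a transfinite sequence of pushouts; once the sequence $\{X_\alpha\}$ is in hand, the verifications reduce to the fact that coproducts are compatible with filtering $I$ by initial segments, which follows from the commutation of colimits with colimits. In the binary case $I = \{1, 2\}$, this specializes to the factorization
\[ A_1 \sqcup A_2 \xrightarrow{s_1 \sqcup \id[A_2]} B_1 \sqcup A_2 \xrightarrow{\id[B_1] \sqcup s_2} B_1 \sqcup B_2, \]
each factor of which is directly a pushout of $s_1$ or $s_2$ along a coproduct inclusion.
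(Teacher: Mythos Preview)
Your argument is correct and is the standard proof of this fact; the paper does not supply its own proof, instead citing \cite[Lem.~2.1.13]{hovey} and marking the statement with \noproof. One minor remark: the paper's definition of closure under transfinite composition is phrased only for limit ordinals $\lambda$, so to match it literally you may wish to take $\lambda$ to be a limit ordinal (always possible when $I$ is infinite, and in the finite case your closing binary factorization already handles it once one knows finite composites lie in $S$); this is a quibble about the paper's phrasing rather than a defect in your reasoning.
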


\begin{definition}
    A map of cubical sets is \emph{anodyne} if it is in the saturation of open box inclusions
    \[ \sat{\{ \dfobox \ito \cube{n} \mid n \geq 1, i = 0, \dots, n, \varepsilon = 0, 1 \}}. \]
\end{definition}
We use the following property of anodyne maps, which follows since saturations are closed under the left lifting property.
\begin{theorem}[{\cite[Thm.~1.34]{doherty-kapulkin-lindsey-sattler}}] \label{thm:anodyne_lift}
    Let $g \from A \to B$ be an anodyne map and $f \from X \to Y$ be a Kan fibration.
    Given a commutative square,
    \[ \begin{tikzcd}
        A \ar[r] \ar[d, "g"'] & X \ar[d, "f"] \\
        B \ar[r] & Y
    \end{tikzcd} \]
    there exists a map $B \to X$ so that the triangles
    \[ \begin{tikzcd}
        A \ar[r] \ar[d, "g"'] & X \ar[d, "f"] \\
        B \ar[ur, dotted] \ar[r] & Y
    \end{tikzcd} \]
    commute. \noproof
\end{theorem}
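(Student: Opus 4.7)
The plan is to use the classical retract/pushout/transfinite-composition argument: the class of maps having the left lifting property against a fixed map is automatically saturated, so it suffices to verify the lifting property only on the generators of the saturation.

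Fix a Kan fibration $g \from X \to Y$ and let $\mathcal{L}(g)$ denote the class of cubical maps having the left lifting property against $g$. I would verify that $\mathcal{L}(g)$ is saturated by checking the three closure properties. For pushouts, given $s \from A \to B$ in $\mathcal{L}(g)$ and any $A \to C$ with pushout $t \from C \to B \push_A C$, any lifting problem for $t$ against $g$ pre-composes to a lifting problem for $s$, and the resulting lift combines with the given map $C \to X$ through the universal property of the pushout to yield a filler for $t$. Retract closure is a direct diagram chase: pre- and post-composing with the retract data converts a lifting problem for the retract into one for the original map, and the resulting lift restricts back along the retract. For transfinite composition, given a chain $A_0 \to A_1 \to \cdots$ with colimit $A$ and each transition map in $\mathcal{L}(g)$, I would construct compatible lifts $A_i \to X$ by transfinite induction extending the given map $A_0 \to X$: at successor stages one lifts against the next transition map, and at limit stages one uses the universal property of the colimit in $\cSet$. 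These assemble into a map $A \to X$ filling the original square.

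Once saturation of $\mathcal{L}(g)$ is established, the theorem is immediate: by definition, Kan fibrations have the right lifting property against every open box inclusion $\dfobox \ito \cube{n}$, so such inclusions lie in $\mathcal{L}(g)$. Since $\mathcal{L}(g)$ is saturated, it contains the saturation of the open box inclusions, which is by definition the class of anodyne maps, completing the proof.

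The main obstacle will be the bookkeeping at limit ordinals in the transfinite composition step. However, $\cSet$ is a cocomplete presheaf category, so the induction goes through without complications; notably no appeal to smallness of domains is required here, since we construct the lift once rather than iterating a small-object argument.
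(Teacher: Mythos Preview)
The paper does not actually prove this theorem; it is stated with the \texttt{\textbackslash noproof} marker and simply cited from \cite{doherty-kapulkin-lindsey-sattler}. Your argument is the standard one and is correct: the class $\mathcal{L}(g)$ of maps with the left lifting property against a fixed map $g$ is always saturated, and since Kan fibrations by definition have the right lifting property against open box inclusions, the saturation of those inclusions---the anodyne maps---lies in $\mathcal{L}(g)$. There is nothing further to compare against in this paper.
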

\begin{theorem}
    A cubical map is anodyne if and only if it is a monomorphism and a weak equivalence.
\end{theorem}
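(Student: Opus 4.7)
The plan is to prove the two implications separately, with the forward direction following from closure properties and the reverse direction relying on the small object argument together with a characterization of trivial Kan fibrations.

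For the forward direction, I would show that the class of monomorphisms which are also weak equivalences is saturated and contains the open box inclusions; closing under saturation then forces every anodyne map into this class. Monomorphisms in the presheaf category $\cSet$ form a saturated class for formal reasons, and each open box inclusion is evidently monic. For the weak equivalence half, the realization $\reali{\dfobox} \ito \reali{\cube{n}} = [0,1]^n$ admits an explicit strong deformation retraction onto its image and is therefore a weak homotopy equivalence. Since $\reali{\uvar}$ is a left adjoint it commutes with the pushouts and transfinite composites appearing in the saturation, so one only needs to recall that monomorphic weak homotopy equivalences in $\Top$ are closed under pushouts along monos, transfinite composition, and retracts, which is standard.

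For the reverse direction, let $f \from X \to Y$ be a monomorphism that is a weak equivalence. Applying the small object argument to the generating set of open box inclusions factors $f$ as $f = p \circ i$, where $i \from X \to Z$ is anodyne and $p \from Z \to Y$ is a Kan fibration. The forward direction ensures $i$ is itself a weak equivalence, so two-out-of-three forces $p$ to be an acyclic Kan fibration. I would then exhibit a lift in the square
\[ \begin{tikzcd} X \ar[r, "i"] \ar[d, "f"'] & Z \ar[d, "p"] \\ Y \ar[ur, dotted] \ar[r, equal] & Y \end{tikzcd} \]
which displays $f$ as a retract of the anodyne map $i$. Since the class of anodyne maps is closed under retracts by definition of saturation, this suffices to conclude that $f$ is anodyne.

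The main obstacle is the existence of the dotted lift, equivalently that acyclic Kan fibrations have the right lifting property against every monomorphism of cubical sets. The standard argument proceeds in two steps. First, one characterizes acyclic Kan fibrations as precisely the maps with the right lifting property against all boundary inclusions $\bdcube{n} \ito \cube{n}$; this is the trivial fibration part of the Grothendieck--Cisinski-style model structure on $\cSet$ and is established in the cubical setting used here by \cite{doherty-kapulkin-lindsey-sattler}. Second, one observes that every monomorphism of cubical sets lies in the saturation of boundary inclusions, via the standard skeletal filtration attaching non-degenerate cubes one dimension at a time and expressing each such attachment as a pushout of a coproduct of boundary inclusions (using \cref{thm:sat_coprod}) along the inclusion of the previous skeleton. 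Chaining these two facts yields the required lift and completes the proof.
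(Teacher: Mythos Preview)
Your argument is correct and spells out in detail what the paper dispatches in one sentence. The paper's proof simply invokes \cite[Thm.~1.34]{doherty-kapulkin-lindsey-sattler}, which constructs the full Cisinski-type model structure on $\cSet$ with cofibrations the monomorphisms and fibrations the Kan fibrations; once that is in hand, the anodyne maps (defined as the saturation of open box inclusions) are by construction exactly the maps with the left lifting property against fibrations, i.e.\ the trivial cofibrations, which in any model structure are precisely the cofibrations that are weak equivalences. So the paper treats the whole statement as an immediate corollary of the existence of the model structure.

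Your approach instead reconstructs the relevant piece of that model-structure argument by hand: saturation plus realization for the forward direction, and the small object argument plus the retract trick for the reverse direction, with the lifting of acyclic fibrations against monomorphisms again deferred to \cite{doherty-kapulkin-lindsey-sattler}. This buys a more self-contained proof that does not require the reader to know what a model structure is, at the cost of length. One small imprecision worth tightening: the claim that ``monomorphic weak homotopy equivalences in $\Top$ are closed under pushouts along monos'' is false for arbitrary topological monomorphisms; what makes your argument work is that $\reali{\uvar}$ sends monomorphisms of cubical sets to relative CW inclusions, hence to Serre cofibrations, and it is acyclic Serre cofibrations that enjoy the required closure properties. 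Stating it that way closes the gap.
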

\begin{proof}
    This follows from \cite[Thm.~1.34]{doherty-kapulkin-lindsey-sattler}, as the saturation of open box inclusions is exactly the class of maps which have the left lifting property with respect to fibrations.
\end{proof}
In particular, we use that anodyne maps are sent to weak homotopy equivalences under geometric realization.
\begin{corollary} \label{thm:cset_we_to_top}
    If $f \from X \to Y$ is anodyne then, for all $n \geq 0$ and $x \in \reali{X}$, the map $\pi_n \reali{f} \from \pi_n(\reali{X}, x) \to \pi_n(\reali{Y}, \reali{f}(x))$ is an isomorphism. \noproof
\end{corollary}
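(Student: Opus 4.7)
The plan is to recognize that this corollary is essentially a two-step unwinding of definitions, chained together with the characterization theorem that immediately precedes it.

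First, by the preceding theorem, every anodyne map $f \from X \to Y$ is in particular a weak equivalence of cubical sets (anodyneness being the conjunction of being a monomorphism and a weak equivalence). Second, the definition of weak equivalence given earlier in \cref{sec:cset} states precisely that $\reali{f} \from \reali{X} \to \reali{Y}$ is a weak homotopy equivalence of topological spaces. Third, a weak homotopy equivalence of spaces is \emph{by definition} a map inducing an isomorphism $\pi_n(\reali{X}, x) \to \pi_n(\reali{Y}, \reali{f}(x))$ for every $n \geq 0$ and every $x \in \reali{X}$. Composing the three implications yields the claim directly.

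Since the conclusion is literally the definition of the hypothesis ``anodyne'' passed through two standard facts already in the paper, there is no real obstacle here. The content of the corollary is purely organizational: it isolates the special case of the preceding theorem that will be invoked in later sections when comparing A-groups of graphs with homotopy groups of the associated topological spaces, so that one need not re-expand the definitions every time.
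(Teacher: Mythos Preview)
Your argument is correct and matches the paper's intended reasoning exactly: the corollary is marked \noproof in the paper precisely because it follows by unwinding the definition of weak equivalence together with the preceding characterization of anodyne maps, which is what you did.
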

\subsection*{Homotopies and homotopy groups}
Using the geoemtric product, we may define a notion of homotopy between cubical maps.
\begin{definition}
    Given cubical maps $f, g \from X \to Y$, a \emph{homotopy} from $f$ to $g$ is a map $H \from X \gprod \cube{1} \to G$ such that the diagram
    \[ \begin{tikzcd}
        X \gprod \cube{0} \ar[d, left, "\face{}{1,0}"'] \ar[dr, "f"] & \\
        X \gprod \cube{1} \ar[r, "H"] & Y \\
        X \gprod \cube{0} \ar[u, left, "\face{}{1,1}"] \ar[ur, "g"'] & 
    \end{tikzcd} \]
    commutes.
\end{definition}

Let $\Mono{\cSet}$ denote the full subcategory of $\arr{\cSet}$ spanned by monomorphisms.
Explicitly, its objects are monic cubical maps $A \ito X$.
A morphism from $A \ito X$ to $B \ito Y$ is a pair of maps $(f, g)$ which form a commutative square of the following form.
\[ \begin{tikzcd}
    A \ar[r, "g"] \ar[d, hook] & B \ar[d, hook] \\
    X \ar[r, "f"] & Y
\end{tikzcd} \]

We refer to the objects and morphisms of $\Mono{\cSet}$ as \emph{relative cubical sets} and \emph{relative cubical map}, respectively.
Following our convention for relative graph maps, we denote a relative cubical set $A \ito X$ by $(X, A)$, supressing the data of the map itself.
If the domain of the map $A \ito X$ is the 0-cube $A = \cube{0}$, we denote it by $(X, x)$, where $x$ is the unique 0-cube in the image of the map $\cube{0} \ito X$.

For a relative cubical map $(f, g) \from (X, A) \to (Y, B)$, the map $g$ is uniquely determined by $f$ since $B \ito Y$ is monic.
As a result, we denote a relative cubical map by $f \from (X, A) \to (Y, B)$.
Mirroring our convention for relative graph maps, we write $f$ for the map $X \to Y$ and $\restr{f}{A}$ for the map $A \to B$.

We have a corresponding notion of homotopy between relative cubical maps.
\begin{definition}
    Let $f, g \from (X, A) \to (Y, B)$ be relative cubical maps.
    A \emph{relative homotopy} from $f$ to $g$ is a morphism $(X \gprod \cube{1}, A \gprod \cube{1}) \to (Y, B)$ in $\Mono{\cSet}$ such that 
    \begin{itemize}
        \item the map $A \gprod \cube{1} \to B$ is a homotopy from $\restr{f}{A}$ to $\restr{g}{A}$;
        \item the map $X \gprod \cube{1} \to Y$ is a homotopy from $f$ to $g$.
    \end{itemize}
\end{definition}
\begin{proposition}[{\cite[Prop.~2.30]{carranza-kapulkin:homotopy-cubical}}] \label{thm:rel_htpy_eq_rel}
    If $B, Y$ are Kan complexes then relative homotopy is an equivalence relation on relative cubical maps $(X, A) \to (B, Y)$. \noproof
\end{proposition}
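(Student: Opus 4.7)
The plan is to verify reflexivity, symmetry, and transitivity separately. Reflexivity is immediate: for any $f \from (X,A) \to (Y,B)$, the composite
\[ X \gprod \cube{1} \xrightarrow{\id[X] \gprod \degen{1}{1}} X \xrightarrow{f} Y \]
is a homotopy from $f$ to itself whose restriction to $A \gprod \cube{1}$ factors through $B$ as the analogous constant homotopy of $\restr{f}{A}$, giving a relative homotopy $f \Rightarrow f$.

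For transitivity, the main case, suppose $H^1$ realizes a relative homotopy $f \Rightarrow g$ and $H^2$ realizes $g \Rightarrow h$, with restrictions $\restr{H^1}{A}, \restr{H^2}{A} \from A \gprod \cube{1} \to B$. I would assemble $H^1$, $H^2$, and the constant homotopy at $g$ into a partial 2-cube $\phi \from X \gprod \obox{2}{i,\varepsilon} \to Y$, choosing the open box to miss the face that will become the composite; a routine check of the cubical identities shows that the three filled faces agree on their shared vertices. The same assembly applied to the $A$-restrictions yields a compatible $\phi_A \from A \gprod \obox{2}{i,\varepsilon} \to B$. Since $B$ is a Kan complex and the open-box inclusion is anodyne, \cref{thm:anodyne_lift} extends $\phi_A$ to $\Phi_A \from A \gprod \cube{2} \to B$. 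Then $\phi$ and the composite of $\Phi_A$ with $B \ito Y$ agree on $A \gprod \obox{2}{i,\varepsilon}$ and thereby assemble into a single map
\[ (X \gprod \obox{2}{i,\varepsilon}) \push_{A \gprod \obox{2}{i,\varepsilon}} (A \gprod \cube{2}) \longrightarrow Y. \]
The inclusion of this pushout into $X \gprod \cube{2}$ is the Leibniz product of the monomorphism $A \ito X$ with the anodyne $\obox{2}{i,\varepsilon} \ito \cube{2}$, and so is itself anodyne. Applying \cref{thm:anodyne_lift} again, this time using that $Y$ is a Kan complex, produces a filler $\Phi \from X \gprod \cube{2} \to Y$ extending all the given data. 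Restricting $\Phi$ and $\Phi_A$ along the missing face yields the desired relative homotopy from $f$ to $h$.

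Symmetry proceeds along the same lines: given $H \from f \Rightarrow g$, assemble a partial 2-cube with $H$ on one face and suitable constant homotopies on two adjacent faces, then run the identical two-stage relative lift (first $\phi_A$ into $B$, then extend into $Y$ along the Leibniz-anodyne inclusion above); the remaining face of the filler is the sought inverse homotopy $g \Rightarrow f$.

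The step I expect to require the most care is justifying that the Leibniz product of the anodyne open-box inclusion with the monomorphism $A \ito X$ is again anodyne — a standard closure property of the anodyne class of cubical maps, but the one that drives the entire relative lifting argument. Once it is granted, the remaining work in all three axioms is routine bookkeeping of faces and degeneracies of $\cube{2}$ via the cubical identities.
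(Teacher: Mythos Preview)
The paper does not actually prove this statement: it is marked with \verb|\noproof| and the result is deferred entirely to the companion paper \cite[Prop.~2.29]{carranza-kapulkin:homotopy-cubical}. Your argument is the standard one and is essentially what that reference contains. The key technical point you single out --- that the pushout-product of the monomorphism $A \ito X$ with an open-box inclusion is anodyne for the geometric product --- is precisely the pushout-product axiom for the Grothendieck model structure on $\cSet$, which is part of \cite[Thm.~1.34]{doherty-kapulkin-lindsey-sattler} (already invoked in the present paper as \cref{thm:anodyne_lift}). Once that is granted, the two-stage lift (first extend $\phi_A$ over $A \gprod \cube{2}$ using that $B$ is Kan, then extend over $X \gprod \cube{2}$ using that $Y$ is Kan) goes through exactly as you describe.

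One small slip in the transitivity step: the constant face on the open box should be the constant homotopy at $f$ (or at $h$), not at $g$. For instance, taking $\obox{2}{2,1}$ with $\face{}{1,0}$-face the constant homotopy at $f$, $\face{}{2,0}$-face equal to $H^1$, and $\face{}{1,1}$-face equal to $H^2$, the corners match and the missing $\face{}{2,1}$-face is the desired relative homotopy $f \Rightarrow h$. With a constant at $g$ no configuration of three faces closes up to give endpoints $f$ and $h$ on the missing face. This is a bookkeeping correction only and does not affect the structure of your argument.
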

It is essential in \cref{thm:rel_htpy_eq_rel} that $B$ and $Y$ are Kan complexes.
If not, the relation of relative homotopy is neither symmetric nor transitive (and in this case, one considers the symmetric transitive closure of relative homotopy).

We write $[(X, A), (Y, B)]$ for the set of relative homotopy classes of relative maps $(X, A) \to (Y, B)$.
With this, we define the homotopy groups of a Kan complex.
\begin{definition}[{\cite[Cor.~3.16]{carranza-kapulkin:homotopy-cubical}}]
    Let $(X,x)$ be a pointed Kan complex.
    We define the $n$-th homotopy group $\pi_n (X,x)$ of $(X,x)$ as the relative homotopy classes of relative maps $(\cube{n}, \bd \cube{n}) \to (X, x)$.
\end{definition}
We give an explicit description of multiplication in the first homotopy group $\pi_1(X,x)$.
\begin{definition}
    Given two 1-cubes $u, v \from \cube{1} \to X$ in a cubical set $X$, 
    \begin{enumerate}
        \item a \emph{concatenation square} for $u$ and $v$ is a map $\eta \from \cube{2} \to X$ such that
        \begin{itemize}
            \item $\eta\face{}{1,0} = u$;
            \item $\eta\face{}{1,1} = v\face{}{1,1}\degen{}{1}$;
            \item $\eta\face{}{2,1} = v$.
        \end{itemize}
        \item a \emph{concatenation} of $u$ and $v$ is a 1-cube $w \from \cube{1} \to X$ which is the $\face{}{2,0}$-face of some concatenation square for $u$ and $v$.
    \end{enumerate}
\end{definition}
\begin{proposition}[{\cite[Thm.~3.11]{carranza-kapulkin:homotopy-cubical}}]
    If $(X,x)$ is a pointed Kan complex then composition induces a well-defined binary operation 
    \[ [(\cube{1}, \bd \cube{1}), (X,x)] \times [(\cube{1}, \bd \cube{1}), (X,x)] \to [(\cube{1}, \bd \cube{1}), (X,x)] \]
    on relative homotopy classes of relative maps which gives a group structure on $[(\cube{1}, \bd \cube{1}), (X,x)]$. \noproof
\end{proposition}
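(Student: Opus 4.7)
The plan is to adapt the standard argument that concatenation of loops in a Kan complex descends to a group operation on homotopy classes, to the cubical setting with connections. The key tool throughout is \cref{thm:anodyne_lift}: because open box inclusions are anodyne, every map out of an open box into a Kan complex extends along the inclusion. As a first application, I will establish that concatenation exists on representatives: given based loops $u, v \from (\cube{1}, \bd\cube{1}) \to (X, x)$, the compatibility $u\face{}{1,1} = x = v\face{}{1,0}$ lets $u$, the degenerate $1$-cube $x\degen{}{1}$, and $v$ assemble into a map out of $\obox{2}{2,0}$, whose filling is a concatenation square $\eta$ and whose $(2,0)$-face is a concatenation $w$ based at $x$.

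Next I will check that the operation is well defined on relative homotopy classes, in two parts. For independence from the choice of filling: given two concatenation squares $\eta, \eta'$ sharing the same three prescribed faces, I will assemble $\eta$, $\eta'$, and suitable degenerate $2$-cubes into a map out of an open $3$-box and fill it; the missing face is a $2$-cube witnessing a relative homotopy between the two $(2,0)$-faces. For homotopy invariance in the first argument: given a relative homotopy from $u$ to $u'$ and a concatenation square for $(u, v)$, I analogously assemble and fill an open $3$-box whose extra face both exhibits a concatenation for $(u', v)$ and connects it by a relative homotopy to the original concatenation. The second argument is handled symmetrically.

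The group axioms are verified by the same strategy. For the identity, the degenerate loop $x\degen{}{1}$ serves as the unit: concatenation squares built directly from degeneracies and connections witness $[x\degen{}{1}] \cdot [u] = [u] = [u] \cdot [x\degen{}{1}]$. For associativity, given loops $u, v, w$, one first chooses concatenation squares for $(u,v)$ and $(v,w)$, then fits these together with a further concatenation square and degenerate cubes into an open $3$-box whose filler exhibits a concatenation square for the other bracketing along with a relative homotopy identifying the two outcomes. For inverses, the cubical connections $\conn{}{1,0}$ and $\conn{}{1,1}$ are essential: applied to $u$ they yield $2$-cubes with three degenerate sides whose remaining edge is a reversed loop $u^{-1}$, which then feed into the well-definedness machinery to show $[u] \cdot [u^{-1}] = [x\degen{}{1}] = [u^{-1}] \cdot [u]$.

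The real obstacle is not any single filling but the bookkeeping: at each step one must identify exactly which faces of a higher-dimensional cube are prescribed by previously constructed cubes, degeneracies, and connections, and then verify their compatibility on common lower-dimensional faces, so that they assemble into a valid map out of an open box. The cubical identities, particularly the degeneracy-connection relations, are invoked repeatedly to check these compatibilities, and it is precisely the presence of connections that makes the construction of inverses possible at all.
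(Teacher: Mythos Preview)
The paper does not prove this statement at all: it is cited from the companion paper \cite{carranza-kapulkin:homotopy-cubical} and marked with the \verb|\noproof| symbol. So there is no in-paper argument to compare against, and your sketch is the standard one that such a citation is pointing to.

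Your outline is essentially correct, but one detail about inverses is misstated. Applying a connection to a based loop $u$ does \emph{not} give a $2$-cube with three degenerate faces and a ``reversed loop'' on the fourth. For instance, $u\conn{}{1,0}$ has faces
\[
(u\conn{}{1,0})\face{}{1,0} = u,\quad
(u\conn{}{1,0})\face{}{1,1} = x\degen{}{1},\quad
(u\conn{}{1,0})\face{}{2,0} = u,\quad
(u\conn{}{1,0})\face{}{2,1} = x\degen{}{1},
\]
so it is precisely a concatenation square witnessing the right unit law $[u]\cdot[x\degen{}{1}] = [u]$; connections are what make the unit laws immediate, not inverses. To produce an inverse of $u$ you instead fill an open box directly: assemble $\obox{2}{2,1} \to X$ with $\face{}{1,0}$-face $u$, $\face{}{1,1}$-face $x\degen{}{1}$, and $\face{}{2,0}$-face $x\degen{}{1}$; the filler is then a concatenation square for $u$ and its $\face{}{2,1}$-face $v$, with concatenation $x\degen{}{1}$, so $[u]\cdot[v] = [x\degen{}{1}]$. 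A symmetric filling handles the other side. With this correction the rest of your plan goes through.
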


As with spaces, the homotopy groups of a Kan complex are the connected components of its \emph{loop space}, which we define.
\begin{definition}
    For a pointed Kan complex $(X,x)$, 
    \begin{itemize}
        \item the \emph{loop space} $\loopsp(X,x)$ of $X$ is the pullback
        \[ \begin{tikzcd}
            \loopsp(X,x) \ar[r] \ar[d] \ar[rd, phantom, "\ulcorner" very near start] & \hom(\cube{1}, X) \ar[d, "{(\face{*}{1,0}, \face{*}{1,1})}"] \\
            \cube{0} \ar[r, "x"] & X \times X
        \end{tikzcd} \]
        with a distinguished 0-cube $x\degen{}{1} \from \cube{0} \to \loopsp(X,x)$.
        \item for $n \geq 0$, the $n$-th \emph{loop space} $\loopsp[n](X,x)$ of $X$ is defined to be
        \[ \loopsp[n](X,x) := \begin{cases}
            (X,x) & n = 0 \\
            \loopsp(\loopsp[n-1](X,x)) & n > 0.
        \end{cases} \]
    \end{itemize} 
\end{definition}
\begin{proposition}[{\cite[Cor.~3.16]{carranza-kapulkin:homotopy-cubical}}] \label{thm:loopsp_pi}
    For a pointed Kan complex $(X,x)$ and $0 \leq k \leq n$, we have an isomorphism
    \[ \pi_{n}(X,x) \cong \pi_{n-k} (\loopsp[k](X,x)) \]
    natural in $X$. \noproof
\end{proposition}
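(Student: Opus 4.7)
The plan is to induct on $k$, reducing to the base case $k = 1$: a natural isomorphism $\pi_n(X, x) \iso \pi_{n-1}(\loopsp(X, x))$ for all $n \geq 1$ (of groups when $n \geq 2$ and of pointed sets when $n = 1$). The general case follows by iterating this base case through the tower of loop spaces.

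For the base case, I would exploit the adjunction $\cube{1} \gprod \uvar \adj \rhom(\cube{1}, \uvar)$. Identifying $\cube{n} \iso \cube{1} \gprod \cube{n-1}$, the boundary decomposes as a union of subobjects
\[ \bd \cube{n} = (\bd \cube{1} \gprod \cube{n-1}) \push (\cube{1} \gprod \bd \cube{n-1}) \text{,} \]
so a relative map $(\cube{n}, \bd \cube{n}) \to (X, x)$ is exactly a cubical map $\cube{1} \gprod \cube{n-1} \to X$ sending both pieces above to $x$. Currying produces a map $\cube{n-1} \to \rhom(\cube{1}, X)$ whose composite with $(\face{*}{1,0}, \face{*}{1,1})$ is constant at $(x, x)$; by the pullback definition of $\loopsp(X, x)$ it therefore factors uniquely through $\loopsp(X, x) \ito \rhom(\cube{1}, X)$, and the condition on $\cube{1} \gprod \bd \cube{n-1}$ translates to $\bd \cube{n-1}$ mapping to the basepoint $x \degen{}{1}$. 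This yields a bijection between relative maps $(\cube{n}, \bd \cube{n}) \to (X, x)$ and relative maps $(\cube{n-1}, \bd \cube{n-1}) \to (\loopsp(X, x), x \degen{}{1})$.

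Applying the same currying (using associativity of $\gprod$) to relative homotopies $(\cube{n} \gprod \cube{1}, \bd \cube{n} \gprod \cube{1}) \to (X, x)$ shows the bijection descends to relative homotopy classes; here one invokes \cref{thm:rel_htpy_eq_rel} together with the fact that $\loopsp(X, x)$ is a Kan complex, obtained as the pullback of the Kan fibration $(\face{*}{1,0}, \face{*}{1,1}) \from \rhom(\cube{1}, X) \to X \times X$ along $\cube{0} \to X \times X$ and using pullback-stability of fibrations from \cref{thm:kan_fib}. For the group structure when $n \geq 2$, I would take multiplication on both sides to be concatenation in a coordinate other than the first (the curried one); a concatenation square for two representatives in $\pi_n(X, x)$ then curries to a concatenation square for their images in $\pi_{n-1}(\loopsp(X, x))$, making the bijection a homomorphism. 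Naturality in $X$ is automatic from naturality of the adjunction together with functoriality of $\loopsp$. The principal technical obstacle I anticipate is twofold: verifying the Kan fibration property for $(\face{*}{1,0}, \face{*}{1,1})$ via a Leibniz-style argument leveraging the cofibration $\bd \cube{1} \ito \cube{1}$, and the coordinate bookkeeping needed to confirm that concatenation squares really do transport cleanly across the currying bijection.
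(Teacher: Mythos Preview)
The paper does not actually prove this proposition: it is quoted verbatim from the companion paper \cite{carranza-kapulkin:homotopy-cubical} and marked with \verb|\noproof|, so there is no argument here to compare against. Your proposal is the standard and correct one: reduce by induction to $k=1$, then use the adjunction $\cube{1}\gprod\uvar \dashv \rhom(\cube{1},\uvar)$ together with the Leibniz decomposition of $\bd\cube{n}$ to identify relative maps $(\cube{n},\bd\cube{n})\to(X,x)$ with relative maps $(\cube{n-1},\bd\cube{n-1})\to(\loopsp(X,x),x\degen{}{1})$, and similarly for relative homotopies. The two ingredients you flag as potential obstacles are routine: that $(\face{*}{1,0},\face{*}{1,1})$ is a Kan fibration is exactly the pullback-power of the cofibration $\bd\cube{1}\hookrightarrow\cube{1}$ against the fibration $X\to\cube{0}$, and the concatenation bookkeeping goes through once you fix the convention that multiplication on $\pi_n$ uses a coordinate not being curried away.
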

Using \cref{thm:loopsp_pi}, the group structure on higher homotopy groups is induced by the bijection $\pi_n(X,x) \cong \pi_1(\loopsp[n-1](X,x))$.

This definition of homotopy groups agrees with the homotopy groups of its geometric realization.
\begin{theorem}[{\cite[Thm.~3.25]{carranza-kapulkin:homotopy-cubical}}] \label{thm:cset_pi_eq_pi}
    There is an isomorphism 
    \[ \pi_n(X, x) \cong \pi_n(\reali{X}, x) \]
    natural in $X$. \noproof
\end{theorem}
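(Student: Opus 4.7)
The plan is to construct a natural comparison map $\f_n \from \pi_n(X, x) \to \pi_n(\reali{X}, x)$ by geometric realization, and to show it is a group isomorphism by factoring through the singular complex $\sing{\reali{X}}$. Given a representative $f \from (\cube{n}, \bdcube{n}) \to (X, x)$, I define $\f_n([f]) := [\reali{f}]$, using the natural identifications $\reali{\cube{n}} \iso [0,1]^n$ and $\reali{\bdcube{n}} \iso \bd[0,1]^n$ that come from the left Kan extension definition of $\reali{\uvar}$. Well-definedness follows because $\reali{\uvar}$ sends the geometric product $\cube{n} \gprod \cube{1}$ to the topological product $[0,1]^n \times [0,1]$ (as a strong monoidal left adjoint, which holds by construction on representables), so relative cubical homotopies realize to relative topological homotopies. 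Naturality in $X$ is then immediate from functoriality.

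Next, I would verify that $\f_n$ is a group homomorphism in two stages. For $n = 1$, a concatenation square $\cube{2} \to X$ realizes to a continuous map $[0,1]^2 \to \reali{X}$ exhibiting the composite path as a topological concatenation, so $\f_1$ preserves products directly. For $n \geq 2$, I would induct using \cref{thm:loopsp_pi} and its topological analogue $\pi_n(S, s) \iso \pi_{n-1}(\loopsp S, \ast)$, noting that the right adjoint $\sing{}$ commutes with the loop space construction (both sides are pullbacks of mapping objects out of the interval), which aligns the iterated-loop-space descriptions on the two sides.

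The main step is bijectivity, which I would establish by a two-part reduction through the singular complex. First, the adjunction $\reali{\uvar} \adj \sing{}$ yields $\pi_n(\sing{S}, s) \iso \pi_n(S, s)$ for any space $S$ essentially for free: relative maps $(\cube{n}, \bdcube{n}) \to (\sing{S}, s)$ correspond bijectively to relative maps $([0,1]^n, \bd[0,1]^n) \to (S, s)$, cubical relative homotopies correspond to topological relative homotopies, and both are equivalence relations (\cref{thm:rel_htpy_eq_rel} combined with \cref{ex:space-kan}). Second, I would show that the unit $\eta_X \from X \to \sing{\reali{X}}$ induces an isomorphism on cubical $\pi_n$. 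Composing these two facts yields the desired natural isomorphism $\pi_n(X, x) \iso \pi_n(\sing{\reali{X}}, x) \iso \pi_n(\reali{X}, x)$.

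The principal obstacle is the final claim, that $\eta_X$ induces an isomorphism on cubical $\pi_n$. This is effectively a cubical approximation theorem: every continuous map $([0,1]^n, \bd[0,1]^n) \to (\reali{X}, x)$ must agree up to relative homotopy with the realization of some cubical map, and likewise every continuous homotopy between realizations of cubical maps must descend to a cubical homotopy. My strategy would be a cell-by-cell approximation argument, subdividing the domain sufficiently finely and then exploiting the lifting property of $\sing{\reali{X}}$ against anodyne inclusions (\cref{thm:anodyne_lift}) to construct a cubical representative one cube at a time, matching the original continuous map up to a controlled relative homotopy. Carrying this out with full combinatorial precision, and checking compatibility with the group operations, is the genuine technical core of the statement and is worked out in the companion paper \cite{carranza-kapulkin:homotopy-cubical}.
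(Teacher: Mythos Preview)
The paper does not prove this statement; it is imported from the companion paper \cite{carranza-kapulkin:homotopy-cubical} with no argument given here, so there is no in-document proof to compare against.

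Your reduction is correct through the penultimate step: the adjunction $\reali{\uvar} \adj \sing{}$ does yield $\pi_n(\sing{S}, s) \iso \pi_n(S, s)$ directly, and the problem genuinely reduces to showing that the unit $\eta_X \from X \to \sing{\reali{X}}$ induces an isomorphism on cubical $\pi_n$. Where your sketch becomes problematic is the proposed attack on that final step. A direct cell-by-cell subdivision-and-approximation argument lies in the same territory as the cubical approximation property that the introduction to this very paper flags as open --- it is precisely the hypothesis \cite[Prop.~5.1]{babson-barcelo-longueville-laubenbacher} on which the original conditional proof of the conjecture rested, and whose avoidance is the point of the present work. The route taken in practice is model-categorical and sidesteps approximation entirely: the model structure on $\cSet$ and its Quillen equivalence with $\Top$ (cf.~\cite{cisinski:presheaves,doherty-kapulkin-lindsey-sattler}, already invoked here for \cref{thm:anodyne_lift}) make $\eta_X$ a weak equivalence between fibrant--cofibrant objects, hence a genuine homotopy equivalence by the abstract Whitehead theorem for model categories. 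Since cubical $\pi_n$ is manifestly invariant under homotopy equivalence, the isomorphism follows with no subdivision or approximation needed.
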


We know that the loop space functor is exact.
\begin{theorem}[{\cite[Thm.~3.6]{carranza-kapulkin:homotopy-cubical}}] \label{thm:cset_loopsp_exact}
    The loop space functor $\loopsp \from \Kan_* \to \Kan_*$ is exact. \noproof
\end{theorem}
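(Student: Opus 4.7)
The plan is to unfold the definition of $\loopsp$ and reduce each axiom of exactness to a pushout-product statement in $\cSet$. Recall that for a pointed Kan complex $(X,x)$,
\[ \loopsp(X,x) = \cube{0} \times_{X \times X} \rhom(\cube{1}, X), \]
with basepoint the constant $1$-cube at $x$. Two technical inputs drive the argument. First, the adjunction $\uvar \gprod \cube{1} \adj \rhom(\cube{1}, \uvar)$ ensures that $\rhom(\cube{1}, \uvar)$ preserves all limits. Second, the pushout-product axiom states that the Leibniz product of an anodyne map with a monomorphism is anodyne, and the pushout-product of two monomorphisms is a monomorphism. Combined with \cref{thm:anodyne_lift}, the pushout-product axiom implies both that $\rhom(\cube{1}, \uvar)$ preserves Kan fibrations and that the evaluation $(\face{*}{1,0}, \face{*}{1,1}) \from \rhom(\cube{1}, X) \to X \times X$ is a Kan fibration whenever $X$ is Kan, so in particular $\loopsp(X, x)$ is a Kan complex.

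With these tools, exactness reduces to four checks. For the \emph{terminal object}, $\rhom(\cube{1}, \cube{0}) \iso \cube{0}$ directly gives $\loopsp(\cube{0}, \id) \iso \cube{0}$. For \emph{pullbacks along fibrations}, given $(P,p) = (X \times_Y Z, (x,z))$ with $f \from (X,x) \to (Y,y)$ a Kan fibration, the functors $\rhom(\cube{1}, \uvar)$, $\uvar \times \uvar$, and the underlying-object projection $\Kan_* \to \Kan$ all preserve limits, so a Fubini-style rearrangement of iterated limits yields $\loopsp(P, p) \iso \loopsp(X, x) \times_{\loopsp(Y, y)} \loopsp(Z, z)$. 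For \emph{fibrations}, a lifting problem
\[ \dfobox \to \loopsp(X,x), \qquad \cube{n} \to \loopsp(Y,y) \]
against $\loopsp f$ transposes via the adjunction and the pullback defining $\loopsp$ to the problem of lifting a map
\[ (\cube{n} \gprod \bdcube{1}) \cup_{\dfobox \gprod \bdcube{1}} (\dfobox \gprod \cube{1}) \to X \]
(which is constant at $x$ on $\cube{n} \gprod \bdcube{1}$ and agrees with the transposed data of $\dfobox \to \loopsp(X,x)$ on $\dfobox \gprod \cube{1}$) against $f$, covering a given map $\cube{n} \gprod \cube{1} \to Y$. The domain inclusion into $\cube{n} \gprod \cube{1}$ is precisely the pushout-product $(\dfobox \ito \cube{n}) \pp (\bdcube{1} \ito \cube{1})$, which is anodyne, so \cref{thm:anodyne_lift} supplies the lift. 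For \emph{acyclic fibrations}, acyclic fibrations in $\Kan$ are characterized by the right lifting property against monomorphisms, so the same argument goes through with $\dfobox \ito \cube{n}$ replaced by an arbitrary monomorphism, the relevant pushout-product then being a monomorphism.

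The main obstacle, and the only genuinely nontrivial step, is the pushout-product axiom for anodyne maps against monomorphisms; everything else is a formal consequence of adjunctions and limit manipulations. Once the pushout-product axiom is in hand (in the companion paper this follows from a generating set for the anodyne class together with standard small-object / cell-complex arguments), the four exactness verifications above proceed essentially mechanically.
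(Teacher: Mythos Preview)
Your argument is correct. The paper itself does not prove this statement: it is imported from the companion paper \cite{carranza-kapulkin:homotopy-cubical} and marked \texttt{\textbackslash noproof}, so there is no in-paper argument to compare against. What you have written is the standard proof one would expect in that companion paper --- $\loopsp$ is assembled from right adjoints and finite limits, so preservation of the terminal object and of pullbacks is formal, while preservation of (acyclic) fibrations reduces, via the defining adjunction and pullback, to the pushout-product axiom for the geometric product (anodyne $\pp$ mono is anodyne, mono $\pp$ mono is mono). One small remark: in the acyclic-fibration step it suffices to lift against boundary inclusions $\bdcube{n} \ito \cube{n}$ rather than arbitrary monomorphisms, since these generate the class; the argument is otherwise unchanged.
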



\section{Cubical nerve of a graph}\label{sec:nerves}
Let $m \geq 1$. 
Geometrically, we view the graph $\gexp{I_m}{n}$ as an $n$-dimensional cube.
Making this intuition formal, we have face, degeneracy, and connection maps defined as follows.
\begin{itemize}
    \item the face map $\face{n}{i,\varepsilon} \from \gexp{I_m}{n-1} \to \gexp{I_m}{n}$ for $1 \leq i \leq n$ and $\varepsilon = 0$ or $1$ is given by 
    \[ \face{n}{i\varepsilon}(v_1, \dots, v_n) = (v_1, \dots, v_{i-1}, \varepsilon m, v_{i}, \dots, v_n); \]
    \item the degeneracy map $\degen{n}{i} \from \gexp{I_m}{n+1} \to \gexp{I_m}{n}$ for $1 \leq i \leq n$ is given by
    \[ \degen{n}{i}(v_1, \dots, v_n) = (v_1, \dots, v_{i-1}, v_{i+1}, \dots, v_n); \]
    \item the negative connection map $\conn{n}{i,0} \from \gexp{I_m}{n} \to \gexp{I_m}{n-1}$ for $1 \leq i \leq n-1$ is given by
    \[ \conn{n}{i,0}(v_1, \dots, v_n) = (v_1, \dots, v_{i-1}, \max(v_i, v_{i+1}), v_{i+2}, \dots, v_n); \]
    \item the positive connection map $\conn{n}{i,1} \from \gexp{I_m}{n} \to \gexp{I_m}{n-1}$ for $1 \leq i \leq n-1$ is given by
    \[ \conn{n}{i,1}(v_1, \dots, v_n) = (v_1, \dots, v_{i-1}, \min(v_i, v_{i+1}), v_{i+2}, \dots, v_n). \]
\end{itemize}
It is straightfoward to verify that these maps satisfy cubical identites.
This defines a functor $\boxcat \to \Graph$ which sends $[1]^n$ to $\gexp{I_m}{n}$.
Left Kan extension along the Yoneda embedding gives an adjunction $\cSet \rightleftarrows \Graph$.
\[ \begin{tikzcd}
    \boxcat \ar[r] \ar[d] & \Graph \ar[dl, yshift=-0.5ex, xshift=1ex] \\
    \cSet \ar[ur, yshift=0.5ex] 
\end{tikzcd} \]
\begin{definition}
    For $m \geq 1$,
    \begin{enumerate}
        \item the \emph{$m$-realization} functor $\reali[m]{\uvar} \from \cSet \to \Graph$ is the left Kan extension of the functor $\boxcat \to \Graph$ which sends $[1]^n$ to $\gexp{I_m}{n}$.
        \item the \emph{$m$-nerve} functor $\gnerve[m] \from \Graph \to \cSet$ is the right adjoint of the $m$-realization functor defined by
        \[ (\gnerve[m]G)_n := \Graph(\gexp{I_m}{n}, G). \]
    \end{enumerate}
\end{definition}
\begin{remark}
    The 1-nerve of a graph $G$ is constructed in \cite{babson-barcelo-longueville-laubenbacher} as the \emph{cubical set associated to $G$}, denoted $M_*(G)$. The geometric realization of this cubical set is referred to as the \emph{cell complex associated to $G$}, denoted $X_G$.
\end{remark}
For a cubical set $X \in \cSet$, the graph $\reali[m]{X}$ may be explicitly described as the colimit
\[ \reali[m]{X} := \colim (G_{m, 0} \ito G_{m, 1} \ito G_{m, 2} \ito \dots ) \]
where
\begin{itemize}
    \item $G_{m, 0}$ is the discrete graph whose vertices are $X_0$;
    \item $G_{m, n+1}$ is obtained from $G_{m, n}$ via the following pushout (where $(X_n)_{\textsf{nd}}$ is the subset of $X_n$ consisting of non-degenerate cubes)
    \[ \begin{tikzcd}
        \coprod\limits_{x \in (X_n)_{\textsf{nd}}} \bd \gexp{I_m}{n} \ar[r] \ar[d] \ar[rd, phantom, "\ulcorner" very near end] & G_{m, n} \ar[d] \\
        \coprod\limits_{x \in (X_n)_{\textsf{nd}}} \gexp{I_m}{n} \ar[r] & G_{m, n+1}
    \end{tikzcd} \]
    where $\bd \gexp{I_m}{n}$ is the subgraph of $\gexp{I_m}{n}$ defined by
    \[ \bd \gexp{I_m}{n} := \{ (v_1, \dots, v_n) \in \gexp{I_m}{n} \mid v_i = 0 \text{ or } m \text{ for some } i=0, \dots, n \} \]
    where the edge set is discrete if $m = n = 1$ and full otherwise.
\end{itemize}
\begin{example} \leavevmode
    \begin{enumerate}
        \item We describe the graph $\reali[1]{\obox{2}{2,1}}$.
        The cubical set $\obox{2}{2,1}$ has four 0-cubes, thus, the graph $G_{1, 0}$ is the discrete graph with four vertices.
        \begin{figure}[H]
            \centering
            \begin{tikzpicture}[node distance=25pt]
                \node[vertex] (00a) {};
                \node[vertex] (10a) [right of=00a] {};
                \node[vertex] (01a) [below of=00a] {};
                \node[vertex] (11a) [below of=10a] {};
                \node[vertex] (00b) [right=40pt of 10a] {};
                \node[vertex] (10b) [right of=00b] {};
                \node[vertex] (01b) [below of=00b] {};
                \node[vertex] (11b) [below of=10b] {};
                \draw (00b) to (10b)
                      (00b) to node (m2) {} (01b)
                      (10b) to (11b);
                \path (10a) to node (m1) {} (11a);
                \draw[right hook->, shorten >=10pt, shorten <=10pt] (m1) to (m2);
            \end{tikzpicture}
            \caption{The embedding of $G_{1, 0} \ito G_{1, 1}$ for $\reali[1]{\obox{2}{2,1}}$.}
        \end{figure}
    
        The open box $\obox{2}{2,1}$ has three non-degenerate 1-cubes.
        The pushout constructed to obtain $G_{1, 1}$ glues three copies of $I_1$ to $G_{1, 0}$.
        As $\obox{2}{2,1}$ contains only degenerate cubes above dimension 1, constructing $G_{1, 1}$ completes the construction of the graph $\reali[1]{\obox{2}{2,1}}$.    

        \item To construct $\reali[3]{\obox{2}{2,1}}$, we instead glue three copies of $I_3$.
        Observe that this process adds new vertices to the graph.
        \begin{figure}[H]
            \centering
            \begin{tikzpicture}[node distance=20pt]
                \node[vertex, fill=gray!35] (00) {};
                \node[vertex] (10) [right of=00] {};
                \node[vertex] (20) [right of=10] {};
                \node[vertex, fill=gray!35] (30) [right of=20] {};
                \node[vertex] (01) [below of=00] {};
                \node[vertex] (02) [below of=01] {};
                \node[vertex, fill=gray!35] (03) [below of=02] {};
                \node[vertex] (31) [below of=30] {};
                \node[vertex] (32) [below of=31] {};
                \node[vertex, fill=gray!35] (33) [below of=32] {};
    
                \draw (00) to (10)
                      (10) to (20)
                      (20) to (30)
                      (00) to (01)
                      (01) to (02)
                      (02) to (03)
                      (30) to (31)
                      (31) to (32)
                      (32) to (33);
            \end{tikzpicture}
            \caption{The graph $\reali[3]{\obox{2}{2,1}}$. The image of the embedding $G_{3, 0} \ito \reali[3]{\obox{2}{2,1}}$ is highlighted.}
        \end{figure}    
    \end{enumerate}
\end{example}

For a graph $G$, let $l^*, r^* \from \gnerve[m]{G} \to \gnerve[m+1]{G}$ denote the cubical maps obtained by precomposition with the surjections $\gexp{l}{n}, \gexp{r}{n} \from \gexp{I_{m+1}}{n} \to \gexp{I_m}{n}$.
We think of these maps as inclusions of $n$-cubes of size $m$ into $n$-cubes of size $m+1$.
\begin{figure}[ht]
    \centering
    \begin{tikzpicture}[node distance=20pt]
        \node[vertex, fill=red] (0) {};
        \node[vertex, fill=cyan] (1) [right=of 0] {};
        
        \node[vertex, fill=cyan] (l2) [below left=of 0] {};
        \node[vertex, fill=red] (l1) [left=of l2] {};
        \node[vertex, fill=red] (l0) [left=of l1] {};

        \node[vertex, fill=red] (r0) [below right=of 1] {};
        \node[vertex, fill=cyan] (r1) [right=of r0] {};
        \node[vertex, fill=cyan] (r2) [right=of r1] {};

        \draw (0) to (1);
        \draw (l0) to (l1) to (l2);
        \draw (r0) to (r1) to (r2);
        \draw[|->, shorten >= 10pt, shorten <= 10pt] (0) to[bend right] node[above] {$l^*$} (l1);
        \draw[|->, shorten >= 10pt, shorten <= 10pt] (1) to[bend left] node[above] {$r^*$} (r1);
    \end{tikzpicture}
    \caption{For a 1-cube $f \from I_1 \to G$ of $\gnerve[1]{G}$, the map $l^* \from \gnerve[1]G \to \gnerve[2] G$ sends $f$ to the 1-cube $fl \from I_2 \to G$ of $\gnerve[2]{G}$ whereas $r^* \from \gnerve[1] G \to \gnerve[2] G$ sends $f$ to the 1-cube $fr \from I_2 \to G$ of $\gnerve[2]{G}$.}
\end{figure}
\begin{figure}[ht]
    \centering
    \begin{tikzpicture}[node distance=15pt]
        \node[vertex, fill=red] (00) {};
        \node[vertex, fill=cyan] (10) [right=of 00] {};
        \node[vertex, fill=yellow] (01) [below=of 00] {};
        \node[vertex, fill=green] (11) [right=of 01] {};

        \node (lspace) [below left=of 01] {};
        \node (rspace) [below right=of 11] {};
        
        \node[vertex, fill=cyan] (l20) [left=of lspace] {};
        \node[vertex, fill=red] (l10) [left=of l20] {};
        \node[vertex, fill=red] (l00) [left=of l10] {};
        \node[vertex, fill=cyan] (l21) [below=of l20] {};
        \node[vertex, fill=red] (l11) [left=of l21] {};
        \node[vertex, fill=red] (l01) [left=of l11] {};
        \node[vertex, fill=green] (l22) [below=of l21] {};
        \node[vertex, fill=yellow] (l12) [left=of l22] {};
        \node[vertex, fill=yellow] (l02) [left=of l12] {};
        
        \node[vertex, fill=red] (r00) [right=of rspace] {};
        \node[vertex, fill=cyan] (r10) [right=of r00] {};
        \node[vertex, fill=cyan] (r20) [right=of r10] {};
        \node[vertex, fill=yellow] (r01) [below=of r00] {};
        \node[vertex, fill=green] (r11) [right=of r01] {};
        \node[vertex, fill=green] (r21) [right=of r11] {};
        \node[vertex, fill=yellow] (r02) [below=of r01] {};
        \node[vertex, fill=green] (r12) [right=of r02] {};
        \node[vertex, fill=green] (r22) [right=of r12] {};

        \draw (00) to (01);
        \draw (01) to (11);
        \draw (00) to (10);
        \draw (10) to (11);

        \draw (l00) to (l10) to (l20);
        \draw (l01) to (l11) to (l21);
        \draw (l02) to (l12) to (l22);
        \draw (l00) to (l01) to (l02);
        \draw (l10) to (l11) to (l12);
        \draw (l20) to (l21) to (l22);
        
        \draw (r00) to (r10) to (r20);
        \draw (r01) to (r11) to (r21);
        \draw (r02) to (r12) to (r22);
        \draw (r00) to (r01) to (r02);
        \draw (r10) to (r11) to (r12);
        \draw (r20) to (r21) to (r22);

        \draw[|->, shorten >= 10pt, shorten <= 10pt] (01) to[bend right] node[above] {$l^*$} (l20);
        \draw[|->, shorten >= 10pt, shorten <= 10pt] (11) to[bend left] node[above] {$r^*$} (r00);
    \end{tikzpicture}
    \caption{For a 2-cube $g \from \gexp{I_1}{2} \to G$ of $\gnerve[1]{G}$, the map $l^* \from \gnerve[1]G \to \gnerve[2] G$ sends $g$ to the 1-cube $g\gexp{l}{2} \from \gexp{I_2}{2} \to G$ of $\gnerve[2]{G}$ whereas $r^* \from \gnerve[1] G \to \gnerve[2] G$ sends $g$ to the 1-cube $g\gexp{r}{2} \from \gexp{I_2}{2} \to G$ of $\gnerve[2]{G}$.}
\end{figure}
We write $c^* \from \gnerve[m]{G} \to \gnerve[m+2]{G}$ for the composite $l^*r^* = r^* l^*$.

\begin{remark}
    While the maps $\gexp{l}{n}, \gexp{r}{n}$ have sections $\gexp{I_m}{n} \to \gexp{I_{m+1}}{n}$, these maps do not commute with face maps, hence do not give retractions $\gnerve[m+1]{G} \to \gnerve[m]{G}$.
    To demonstrate this, we show the map $l^* \from \gnerve[1]{I_2} \to \gnerve[2]{I_2}$ does not have a retraction.
    The identity map $\id[I_2] \from I_2 \to I_2$ gives a 1-cube of $\gnerve[2]{I_2}$ whose faces are the 0-cubes 0 and 2.
    Observe a retraction of $l^*$ must send the 0-cubes 0 and 2 to 0 and 2, respectively.
    There is no map $f \from I_1 \to I_2$ such that $f\face{}{1,0} = 0$ and $f\face{}{1,1} = 2$.
    That is, there is no 1-cube of $\gnerve[1] I_2$ which $\id[I_2] \in (\gnerve[2] I_2)_1$ can be mapped to.
    Thus, the map $l^*$ does not have a retraction.
\end{remark} 

For a cubical set $X$, we analogously have maps $l_*, r_* \from \reali[m+1]{X} \to \reali[m]{X}$.
We write $c_* \from \reali[m+2]{X} \to \reali[m]{X}$ for the composite $l_* r_* = r_* l_*$.

We define the \emph{nerve} functor $\gnerve \from \Graph \to \cSet$ by
\[ (\gnerve G)_n := \{ f \from \gexp{I_\infty}{n} \to G \mid f \text{ is stable in all directions } (i, \varepsilon) \}. \]
Cubical operators of $\gnerve G$ are given as follows.
\begin{itemize}
    \item The map $\face{n}{i,\varepsilon} \from (\gnerve G)_n \to (\gnerve G)_{n-1}$ for $i = 1, \dots, n$ and $\varepsilon = 0, 1$ is given by $f\face{n}{i,\varepsilon} \from \gexp{I_\infty}{n-1} \to G$ defined by
    \[ f\face{n}{i,\varepsilon}(v_1, \dots, v_{n-1}) = f(v_1, \dots, v_{i-1}, (2\varepsilon - 1) M, v_i, \dots, v_{n-1}) \]
    where $M$ is such that $f$ is stable in direction $(i,\varepsilon)$;
    \item The map $\degen{n}{i} \from (\gnerve G)_n \to (\gnerve G)_{n+1}$ for $i=1, \dots, n$ is given by $f\degen{n}{i} \from \gexp{I_\infty}{n+1} \to G$ defined by
    \[ f\degen{n}{i}(v_1, \dots, v_{n+1}) = f(v_1, \dots, v_{i-1}, v_{i+1}, \dots, v_{n+1}); \]
    \item The map $\conn{n}{i,0} \from (\gnerve G)_n \to (\gnerve G)_{n+1}$ for $i=1, \dots, n-1$ is given by $f\conn{n}{i,0} \from \gexp{I_\infty}{n+1} \to G$ defined by
    \[ f\conn{n}{i,0}(v_1, \dots, v_{n+1}) = f(v_1, \dots, v_{i-1}, \max(v_i, v_{i+1}), v_{i+1}, \dots, v_{n+1}); \]
    \item The map $\conn{n}{i,1} \from (\gnerve G)_n \to (\gnerve G)_{n+1}$ for $i=1, \dots, n-1$ is given by $f\conn{n}{i,0} \from \gexp{I_\infty}{n+1} \to G$ defined by
    \[ f\conn{n}{i,0}(v_1, \dots, v_{n+1}) = f(v_1, \dots, v_{i-1}, \min(v_i, v_{i+1}), v_{i+1}, \dots, v_{n+1}). \]
\end{itemize}
One verifies these maps satisfy cubical identities, thus $\gnerve G$ is a cubical set. 
A straightforward computation gives the following statement.
\begin{proposition} \label{thm:nerve_colim}
    We have an isomorphism
    \[ \gnerve{G} \cong \colim (\gnerve[1]{G} \xrightarrow{l^*} \gnerve[2]{G} \xrightarrow{r^*} \gnerve[3]{G} \xrightarrow{l^*} \gnerve[4]{G} \xrightarrow{r^*} \dots) \]
    natural in $G$.
    \noproof
\end{proposition}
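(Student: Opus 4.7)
The plan is to reduce, using the fact that colimits in the presheaf category $\cSet$ are computed pointwise, to showing for each $n \geq 0$ a natural bijection
\[ (\gnerve G)_n \iso \colim_m \Graph(\gexp{I_m}{n}, G) \]
compatible with all cubical operators. I build this bijection via an explicit clamping construction.

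For each $m \geq 1$, I define a graph surjection $c_m \from I_\infty \to I_m$ by clamping with a shift, $c_m(v) := \max(0, \min(v + \lfloor m/2 \rfloor, m))$. Precomposition with $\gexp{c_m}{n}$ sends $\phi \from \gexp{I_m}{n} \to G$ to a map $\phi \circ \gexp{c_m}{n} \from \gexp{I_\infty}{n} \to G$ that is stable in all directions by construction, hence an element of $(\gnerve G)_n$. A direct calculation gives $l \circ c_{m+1} = c_m$ for odd $m$ and $r \circ c_{m+1} = c_m$ for even $m$, matching the alternating pattern of $l^*$ and $r^*$ transitions; so the family of maps $\gnerve[m] G \to \gnerve G$ forms a cocone under the diagram and factors through a natural cubical map $\alpha \from \colim_m \gnerve[m] G \to \gnerve G$.

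It remains to show that $\alpha$ is a levelwise bijection. For surjectivity, any $f \in (\gnerve G)_n$ stable with bound $M$ in every direction equals $\phi \circ \gexp{c_m}{n}$ for any sufficiently large $m$, where $\phi$ is the restriction of $f$ along an inclusion $\gexp{I_m}{n} \ito \gexp{I_\infty}{n}$ whose window is large enough to contain $[-M, M]^n$ and is positioned so that the clamp shift matches. For injectivity, if two representatives $\phi, \phi'$ induce the same stable extension, then after enough applications of the alternating transition maps both agree with the common extension restricted to a sufficiently large cube, so they are identified in the filtered colimit of sets. Compatibility with cubical operators and naturality in $G$ follow because each cubical operator on $\gnerve G$ is induced by a graph endomap of $\gexp{I_\infty}{\ast}$ coming from the cubical structure on $\{\gexp{I_m}{\ast}\}_m$, and these commute with the clamp maps at the indexing levels relevant to each cube.

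The main obstacle is tracking the shifts $\lfloor m/2 \rfloor$ alongside the alternation of $l^*$ and $r^*$; once this bookkeeping is set up, all required identities reduce to the two clamp equalities above and the explicit description of the $n$-cubes of the colimit as a filtered colimit of hom-sets.
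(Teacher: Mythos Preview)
Your approach is correct and is essentially the ``straightforward computation'' the paper alludes to but omits entirely (the statement is marked \noproof). The clamping maps $c_m$ you introduce are exactly the right gadget: the identities $l \circ c_{m+1} = c_m$ for $m$ odd and $r \circ c_{m+1} = c_m$ for $m$ even hold on the nose, and since each $c_m$ is a surjective monotone graph map, precomposition with $\gexp{c_m}{n}$ is injective and commutes with degeneracies and connections (monotonicity gives $c_m(\max(a,b)) = \max(c_m a, c_m b)$, and similarly for $\min$). For faces the compatibility is slightly more delicate than you indicate, since the face map on $\gnerve G$ is defined via an unspecified stabilization bound $M$ rather than a fixed graph map $\gexp{I_\infty}{n-1} \to \gexp{I_\infty}{n}$; but because $c_m((2\varepsilon-1)M) = \varepsilon m$ for all sufficiently large $M$, the check goes through. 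Your injectivity argument is also fine once you observe that the transition map $p \from I_{m'} \to I_m$ satisfies $p \circ c_{m'} = c_m$, so surjectivity of $c_{m'}$ forces the two representatives to agree in $\gnerve[m']G$.
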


The nerve and realization functors satisfy the following categorical properties.

\begin{proposition} \label{thm:reali_prod}
    For cubical sets $X, Y$ and $m \geq 1$, we have an isomorphism
    \[ \reali[m]{X \gprod Y} \cong \reali[m]{X} \gtimes \reali[m]{Y} \]
    natural in $X$ and $Y$.
  \end{proposition}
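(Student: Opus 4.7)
The plan is to prove the isomorphism by the standard strategy for identifying two cocontinuous bifunctors: verify that both sides agree on representables, then extend by density.

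First, I would establish that both bifunctors
\[ \reali[m]{- \gprod -} \quad \text{and} \quad \reali[m]{-} \gtimes \reali[m]{-} \from \cSet \times \cSet \to \Graph \]
preserve colimits in each variable separately. For the left-hand side, the geometric product $\gprod$ on $\cSet$ is biclosed, so it preserves colimits in each variable, and $\reali[m]{-}$ is a left adjoint hence cocontinuous. For the right-hand side, $\reali[m]{-}$ is again cocontinuous, and $\gtimes$ on $\Graph$ preserves colimits in each variable because $(\Graph, \gtimes, I_0, \ghom{-}{-})$ is closed symmetric monoidal.

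Next, I would check the representable case. For $p, q \geq 0$, the definition of $\gprod$ as the left Kan extension of $\gprod \from \boxcat \times \boxcat \to \boxcat$ gives $\cube{p} \gprod \cube{q} \iso \cube{p+q}$, so
\[ \reali[m]{\cube{p} \gprod \cube{q}} \iso \reali[m]{\cube{p+q}} \iso \gexp{I_m}{p+q} \iso \gexp{I_m}{p} \gtimes \gexp{I_m}{q} \iso \reali[m]{\cube{p}} \gtimes \reali[m]{\cube{q}}\text{,} \]
where the penultimate isomorphism is (associativity of $\gtimes$ and) the definition of the $n$-fold cartesian power. These isomorphisms are natural in the box operators by direct inspection of the Kan-extension formulas.

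Finally, I would invoke the density theorem: every cubical set $X$ is canonically a colimit of representables, $X \iso \colim_{\cube{p} \to X} \cube{p}$, and similarly for $Y$. Writing $X \iso \colim_i \cube{p_i}$ and $Y \iso \colim_j \cube{q_j}$, cocontinuity in each variable on both sides yields
\[ \reali[m]{X \gprod Y} \iso \colim_{i,j} \reali[m]{\cube{p_i} \gprod \cube{q_j}} \iso \colim_{i,j} \reali[m]{\cube{p_i}} \gtimes \reali[m]{\cube{q_j}} \iso \reali[m]{X} \gtimes \reali[m]{Y}\text{,} \]
and the naturality of the representable-case isomorphism makes this a natural isomorphism in $X$ and $Y$.

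The only mild obstacle is bookkeeping around the bifunctor aspect: one must be careful to perform the colimit interchange in one variable at a time (which is permitted since both bifunctors are separately cocontinuous), rather than naively treat $\gprod$ and $\gtimes$ as colimit-preserving in the joint argument. Once that is handled, the proof is mechanical.
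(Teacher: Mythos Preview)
Your proposal is correct and follows essentially the same approach as the paper: both reduce to representables via cocontinuity in each variable and then compute $\reali[m]{\cube{p} \gprod \cube{q}} \iso \gexp{I_m}{p+q} \iso \reali[m]{\cube{p}} \gtimes \reali[m]{\cube{q}}$. Your write-up is in fact slightly more explicit than the paper's about why each functor preserves colimits and about the one-variable-at-a-time colimit interchange.
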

  \begin{proof}
    The composite functors
    \[ \reali[m]{\uvar \gprod \uvar}, \reali[m]{\uvar} \gtimes \reali[m]{\uvar} \from \cSet \times \cSet \to \Graph \]
    preserve all colimits.
    As $\cSet$ is a presheaf category, every cubical set is a colimit of representable presheaves.
    Thus, it suffices to show these composites are naturally isomorphic on pairs $(\cube{a}, \cube{b})$ for $a, b \geq 0$. We compute
    \begin{align*}
      \reali[m]{\cube{a} \gprod \cube{b}} &\cong \reali[m]{\cube{a + b}} \\
      &\cong \gexp{I_{m}}{a + b} \\
      &\cong \gexp{I_m}{a} \gtimes \gexp{I_m}{b} \\
      &\cong \reali[m]{\cube{a}} \gtimes \reali[m]{\cube{b}}. \qedhere
    \end{align*}
  \end{proof}
  \begin{corollary} \label{thm:nerve_hom_iso}
    Let $X$ be a cubical set and $G$ be a graph.
    For $m \geq 1$, we have isomorphisms
    \[ \lhom(X, \gnerve[m] G) \cong \gnerve[m] (\ghom{\reali[m]{X}}{G}) \cong \rhom(X, \gnerve[m] G) \]
    natural in $X$ and $G$.
  \end{corollary}
  \begin{proof}
      The square
      \[ \begin{tikzcd}[column sep = large]
        \cSet \ar[r, "{\uvar \gprod X}"] \ar[d, "{\reali[m]{\uvar}}"'] & \cSet \ar[d, "{\reali[m]{\uvar}}"] \\
        \Graph \ar[r, "{\uvar \gtimes \reali[m]{X}}"] & \Graph
      \end{tikzcd} \]
      commutes up to natural isomorphism by \cref{thm:reali_prod}, thus the corresponding square of right adjoints
      \[ \begin{tikzcd}[column sep = huge]
          \Graph \ar[r, "{\ghom{\reali[m]{X}}{\uvar}}"] \ar[d, "{\gnerve[m]}"'] & \Graph \ar[d, "{\gnerve[m]}"] \\
          \cSet \ar[r, "{\lhom(X, \uvar)}"] & \cSet
      \end{tikzcd} \]
      commutes up to natural isomorphism.
      For naturality in $X$, the required square commutes by faithfulness of the Yoneda embedding $\cSet \to \fcat{\cSet^\op}{\Set}$.
      A similar argument involving $X \gprod -$ constructs the isomorphism involving $\rhom(X, -)$.
  \end{proof}
\begin{proposition} \label{thm:nerve_fin_lim}
    The nerve functor $\gnerve \from \Graph \to \cSet$ preserves finite limits.
\end{proposition}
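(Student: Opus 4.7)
The plan is to reduce the claim to the fact that filtered colimits commute with finite limits in $\Set$. My starting point is \cref{thm:nerve_colim}, which presents $\gnerve G$ as the sequential colimit of the diagram $\gnerve[1] G \to \gnerve[2] G \to \cdots$ naturally in $G$. Each $\gnerve[m]$ is a right adjoint to $\reali[m]{\uvar}$ and therefore preserves all limits formed in $\Graph$.

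Given a finite diagram $F \from J \to \Graph$, I would chain the isomorphisms
\[
  \gnerve\bigl(\lim\nolimits_J F\bigr) \cong \colim_m \gnerve[m]\bigl(\lim\nolimits_J F\bigr) \cong \colim_m \lim\nolimits_J \gnerve[m] F \cong \lim\nolimits_J \colim_m \gnerve[m] F \cong \lim\nolimits_J \gnerve F,
\]
where the outer two isomorphisms are instances of \cref{thm:nerve_colim}, the second step is the limit-preservation of each $\gnerve[m]$, and the middle swap is the classical commutation of filtered colimits with finite limits. The latter applies in $\cSet$ because limits and colimits in a presheaf category are computed levelwise in $\Set$, reducing the swap to the corresponding statement for sets.

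I do not anticipate any genuine obstacle, as every ingredient is already assembled above. The one subtle point worth flagging is that the finiteness of $J$ is essential and enters precisely in the colimit--limit swap; in a direct unpacking --- realizing an $n$-cube of $\gnerve G$ as a map $\gexp{I_\infty}{n} \to G$ that is stable in every direction --- the same finiteness would manifest as the ability to pick a single stability threshold valid simultaneously across the finitely many projections from $\lim_J F$.
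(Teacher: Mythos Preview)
Your proof is correct and follows exactly the same approach as the paper: use \cref{thm:nerve_colim} to write $\gnerve$ as a filtered colimit of the right adjoints $\gnerve[m]$, then invoke the commutation of filtered colimits with finite limits. Your version is slightly more explicit about why the colimit--limit swap is valid in $\cSet$ (levelwise computation in $\Set$), but the argument is the same.
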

\begin{proof}
    By \cref{thm:nerve_colim}, the nerve of $G$ is a filtered colimit.
    This then follows as filtered colimits commute with finite limits and $\gnerve[m] \from \Graph \to \cSet$ is a right adjoint for all $m \geq 1$. 
\end{proof}
We prove that the nerve functors preserve filtered colimits, which we use to give an analogue of \cref{thm:nerve_hom_iso} for the nerve functor $\gnerve \from \Graph \to \cSet$.
\begin{proposition} \label{thm:nerve_colim_filtered}
    For $m \geq 0$, the functors $\gnerve[m], \gnerve \from \Graph \to \cSet$ preserve filtered colimits.
\end{proposition}
\begin{proof}
    For $\gnerve[m]$, it suffices to show filtered colimits are preserved component-wise, i.e.~that 
    \[ \Graph(\gexp{I_m}{n}, \uvar) \from \Graph \to \Set \]
    preserves filtered colimits.
    This follows from \cref{thm:fin_graph_compact}.

    For $\gnerve$, this follows since $\gnerve[m]$ preserves filtered colimits and colimits commute with colimits.
\end{proof}
For a cubical set $X$, define a functor $\indP{X} \from \Graph \to \Graph$ by
\[ \indP{X} G := \colim \left( \begin{tikzcd}
    \ghom{\reali[1]{X}}{G} \ar[r, "(l_*)^*"] & \ghom{\reali[2]{X}}{G} \ar[r, "(r_*)^*"] & \ghom{\reali[3]{X}}{G} \ar[r, "(l_*)^*"] & \dots
\end{tikzcd} \right). \] 
As an example, the path graph $PG$ of a graph is exactly $\indP{\cube{1}} G$.
\begin{proposition} \label{thm:nerve_hom_iso_fin}
    Let $X$ be a cubical set with finitely many non-degenerate cubes.
    We have isomorphisms
    \[ \gnerve(\indP{X}{G}) \cong \lhom(X, \gnerve G) \cong \rhom(X, \gnerve G) \]
    natural in $X$ and $G$.
\end{proposition}
\begin{proof}
    By \cref{thm:nerve_colim}, the left term $\gnerve(\indP{X}{G})$ is the colimit
    \[ \gnerve(\indP{X}{G}) \cong \colim \left( \begin{tikzcd}
        \gnerve[1] \ghom{\reali[1]{X}}{G} \ar[r, hook, "l^*"] \ar[d, hook, "(l_*)^*"] & \gnerve[2] \ghom{\reali[1]{X}}{G} \ar[r, hook, "r^*"] \ar[d, hook, "(l_*)^*"] & \dots \\
        \gnerve[1] \ghom{\reali[2]{X}}{G} \ar[r, hook, "l^*"] \ar[d, hook, "(r_*)^*"] & \gnerve[2] \ghom{\reali[2]{X}}{G} \ar[r, hook, "r^*"] \ar[d, hook, "(r_*)^*"] & \dots \\
        \dots & \dots & \dots
    \end{tikzcd} \right), \]
    where the vertical maps are monomorphisms since $l_*, r_* \from \reali[m+1]{X} \to \reali[m]{X}$ are epimorphisms for all $m \geq 1$.
    Computing this colimit component-wise in $\Set$, this colimit is naturally isomorphic to the colimit
    \[ \gnerve(\indP{X}{G}) \cong \colim \left( \begin{tikzcd}
        \gnerve[1] \ghom{\reali[1]{X}}{G} \ar[r] & \gnerve[2] \ghom{\reali[2]{X}}{G} \ar[r] & \dots 
    \end{tikzcd} \right) \]
    along the diagonal.
    Applying \cref{thm:nerve_hom_iso}, we may write
    \[ \gnerve(\indP{X}{G}) \cong \colim \left( \begin{tikzcd}
        \lhom(X, \gnerve[1] G) \ar[r] & \lhom(X, \gnerve[2] G) \ar[r] & \dots
    \end{tikzcd} \right). \]
    As $X$ has finitely many non-degenerate cubes, the functor $\lhom(X, \uvar)$ preserves filtered colimits.
    Thus,
    \[ \gnerve(\indP{X}{G}) \cong \lhom(X, \gnerve G). \]
    An analogous proof applies in the case of $\rhom(X, \gnerve G)$.
\end{proof}

We show that the nerve functors ``detect'' concatenation of paths. 
That is, they contain all possible composition squares.
\begin{proposition} \label{thm:nerve_conc}
    Let $f \from (I_\infty, I_{\geq M}) \to (G, v)$ and $g \from (I_\infty, I_{\geq N}) \to (G, v)$ be based graph maps for some $M, N \geq 0$ such that $f\face{}{1,1} = g\face{}{1,0}$.
    The concatenation $f \cdot g$ of $f$ followed by $g$ is a concatenation of $f$ and $g$ in the $(M+N)$-nerve $\gnerve[(M + N)]{G}$.
\end{proposition}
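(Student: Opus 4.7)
The plan is to exhibit an explicit 2-cube $\eta$ in $\gnerve[(M+N)]{G}$, equivalently a graph homomorphism $\eta \from \gexp{I_{M+N}}{2} \to G$, whose four faces realize the data required by the concatenation square for appropriate restrictions of $f$, $g$, and $f \cdot g$. First, I would fix canonical 1-cube representatives of $f$, $g$, and $f \cdot g$ in the $(M+N)$-nerve: each is obtained by restricting the corresponding map $\gexp{I_\infty}{1} \to G$ along a suitable inclusion $I_{M+N} \ito \gexp{I_\infty}{1}$, with the shifts chosen so that, by the basing hypothesis and the defining formula for $f \cdot g$, both endpoints of each restriction equal the basepoint $v$. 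The decomposition $(f \cdot g)(w) = f(w)$ for $w \leq M$ and $(f \cdot g)(w) = g(w - M - N)$ for $w > M$ naturally splits the interval of length $M + N$ into an initial $f$-segment of length $M$ and a final $g$-segment of length $N$, which guides the construction.

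Next, I would define $\eta$ piecewise on the vertex set $\{0, 1, \ldots, M+N\}^2$. On the region where the $f$-part is active, set $\eta(i, j)$ to be a value of $f$ at an affine shift of $(i, j)$ chosen so that the left edge ($i = 0$) recovers the fixed representative of $f$; symmetrically, on the complementary region, set $\eta(i, j)$ to be a value of $g$ at an affine shift chosen so that the top edge ($j = M+N$) recovers $g$. Along the interface between the two regions, both formulas return $v$ by the hypothesis $f\face{}{1,1} = g\face{}{1,0} = v$, so the piecewise definition is consistent. The guiding intuition is that traversing $\eta$ along the left and then top edges executes $f$ followed by $g$ at their original scales, while traversing along the bottom and then right edges (the latter being constant at $v$) executes the concatenation $f \cdot g$.

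The main technical point will be verifying that $\eta$ is a graph homomorphism. Adjacencies within either region are inherited directly from those of $f$ or $g$, so the only subtle case is an edge of $\gexp{I_{M+N}}{2}$ that crosses the interface between the two regions: one endpoint lies just inside the $f$-region where its image is $v$, and the other endpoint lies just inside the $g$-region, so adjacency of their images reduces to the fact that $g$ (respectively $f$) carries vertices adjacent to its stable region to vertices equal to or adjacent to $v$, which holds because $g$ (respectively $f$) is itself a graph map. Once $\eta$ is confirmed to be a graph map, reading off the four faces $\eta\face{2}{1,0}$, $\eta\face{2}{1,1}$, $\eta\face{2}{2,0}$, $\eta\face{2}{2,1}$ and matching them against the fixed representatives of $f$, the degenerate 1-cube at $v$, $f \cdot g$, and $g$ respectively completes the verification that $f \cdot g$ is a concatenation of $f$ and $g$ in $\gnerve[(M+N)]{G}$.
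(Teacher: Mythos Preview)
Your plan is sound and would prove the statement, and conceptually it is the same construction as the paper's: both build an explicit $2$-cube $\eta \from \gexp{I_{M+N}}{2} \to G$ whose faces are $f$, $g$, $f \cdot g$, and the constant path at $v$, and both exploit the splitting of the interval of length $M+N$ into an $f$-piece of length $M$ and a $g$-piece of length $N$.

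The difference is one of packaging. The paper avoids your piecewise definition and interface analysis by writing $\eta$ as the horizontal concatenation of two ready-made squares: the connection $f\gamma_{1,0}$ on the left (which has $f$ on both its $\partial_{1,0}$- and $\partial_{2,0}$-faces and the constant path at $f\partial_{1,1}$ on the other two) and the degeneracy $g\sigma_2$ on the right (which has $g$ on both its $\partial_{2,0}$- and $\partial_{2,1}$-faces and constant paths on the other two). Since $\gamma_{1,0}$ and $\sigma_2$ are already known to be graph maps and the two squares agree on their shared edge (the constant path at $f\partial_{1,1} = g\partial_{1,0}$), there is nothing further to check: the concatenated map is automatically a graph homomorphism, and the four faces can be read off from the cubical identities. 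Your approach buys explicitness at the cost of having to re-verify by hand the graph-map property across the interface; the paper's approach buys brevity by leaning on the pre-existing cubical structure maps.
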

\begin{proof}
    The horizontal concatenation of $f\conn{}{1,0} \from \gexp{I_M}{2} \to G$ on the left and $g\degen{}{2} \from \gexp{I_N}{2} \to G$ on the right gives a square $\eta \from \gexp{I_{M+N}}{2} \to G$ such that
    \[ \begin{array}{l l}
        \eta\face{}{1,0} = f & \eta\face{}{1,1} = g\face{}{1,1}\degen{}{1} \\
        \eta\face{}{2,0} = f \cdot g & \eta\face{}{2,1} = g.
    \end{array} \]
    This is exactly a composition square witnessing $f \cdot g$ as a composition of $f$ and $g$ in $\gnerve[(M+N)]{G}$.
\end{proof}
As well, the nerve functors reflect isomorphisms.
  \begin{proposition}
    The nerve functors reflect isomorphisms.
    That is, given a graph map $f \from G \to H$, if either
    \begin{enumerate}
        \item $\gnerve[m] f \from \gnerve[m] G \to \gnerve[m] H$ is an isomorphism for some $m \geq 1$; or
        \item $\gnerve f \from \gnerve G \to \gnerve H$ is an isomorphism
    \end{enumerate}
    then $f$ is.
  \end{proposition}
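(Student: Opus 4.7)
The plan is to handle both cases uniformly by testing the isomorphism hypothesis against $0$-cubes (to get bijectivity on vertices) and against one carefully chosen $1$-cube (to reflect edges).

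First I would observe that $(\gnerve[m] G)_0 = (\gnerve G)_0 = \Graph(I_0, G) = G_V$, and the cubical map induced by $f$ in each case acts on $0$-cubes exactly as $f_V$. In either hypothesis, $\gnerve[m]f$ or $\gnerve f$ is a bijection on $0$-cubes, so $f_V \from G_V \to H_V$ is a bijection. Since a map of graphs is determined by its effect on vertices (via the discussion following \cref{def:graph}), showing $f$ is an isomorphism reduces to proving that $f^{-1}_V$ is a graph map: whenever $w, w' \in H_V$ are connected by an edge in $H$, the vertices $f^{-1}(w)$ and $f^{-1}(w')$ are either equal or connected by an edge in $G$.

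Next, given such $w, w'$, I would construct a ``step function'' $1$-cube witnessing this edge. Concretely, in the $\gnerve[m]$ case define $g \from I_m \to H$ by $g(0) = w$ and $g(i) = w'$ for $1 \leq i \leq m$; in the $\gnerve$ case define $g \from I_\infty \to H$ analogously with $g(i) = w$ for $i \leq 0$ and $g(i) = w'$ for $i \geq 1$ (which is stable in the $(1,0)$ and $(1,1)$ directions). In each case $g$ is a valid graph map precisely because $w, w'$ are connected in $H$, and defines a $1$-cube of $\gnerve[m]H$ (resp.\ $\gnerve H$) with $\face{}{1,0}$-face $w$ and $\face{}{1,1}$-face $w'$.

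Using the hypothesis that $\gnerve[m]f$ (resp.\ $\gnerve f$) is an isomorphism, I would lift $g$ along this isomorphism to a $1$-cube $\tilde g$ with $f\tilde g = g$, whose faces are $f^{-1}(w)$ and $f^{-1}(w')$. The key point is then that bijectivity of $f_V$ forces $\tilde g$ to be the corresponding step function: every index $i \geq 1$ satisfies $f\tilde g(i) = w'$, hence $\tilde g(i) = f^{-1}(w')$. Since $\tilde g$ is a graph map, the edge between $0$ and $1$ in $I_m$ (or $I_\infty$) is sent to an edge or a degenerate edge of $G$, so $f^{-1}(w)$ and $f^{-1}(w')$ are equal or connected in $G$, as required.

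The only subtlety — and the main obstacle worth checking — is that the step-function $g$ is indeed a well-defined graph map and that the lift genuinely collapses onto a step function on the nose; both reduce to the bijectivity of $f_V$ together with the fact that the only edge in $I_m$ whose endpoints could land in distinct fibres of $f$ is the first one. With that verified, the argument is identical in both cases, and no separate treatment of $\gnerve$ via the colimit presentation of \cref{thm:nerve_colim} is needed.
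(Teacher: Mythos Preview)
Your argument is correct and follows essentially the same approach as the paper: use bijectivity on $0$-cubes to get $f_V$ bijective, then lift a $1$-cube of $\gnerve[m]H$ (or $\gnerve H$) representing a given edge and use injectivity of $f_V$ to force the lift to be the corresponding step function, hence an edge in $G$. The only cosmetic difference is that the paper obtains its $1$-cube via the inclusion $\gnerve[1]H \hookrightarrow \gnerve H$ rather than writing down the step function explicitly, but this amounts to the same thing.
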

  \begin{proof}
    We prove that $\gnerve \from \Graph \to \cSet$ reflects isomorphisms as the case for $\gnerve[m] \from \Graph \to \cSet$ is analogous.

    The inverse of $\gnerve f$ is, in particular, an inverse on 0-cubes $(\gnerve H)_0 \to (\gnerve G)_0$, i.e.~an inverse of $f$ on vertices $H_V \to G_V$.
    It suffices to show this map $g \from H_V \to G_V$ is a graph map.

    An edge in $H$ gives a map $e \from I_1 \to H$.
    This gives a 1-cube $\overline{e} \from \cube{1} \to \gnerve G$ by the inclusion $\gnerve[1] H \ito \gnerve H$.
    As $\gnerve f \from \gnerve G \to \gnerve H$ is an isomorphism, there is a unique 1-cube $\overline{p} \from \cube{1} \to \gnerve G$ such that $\gnerve f(\overline{p}) = \overline{e}$.
    This corresponds to a map $p \from I_\infty \to G$ such that $p$ stabilizes in both directions and $fp = e$.

    As $f$ is injective on vertices and $e$ is a path of length 1 (i.e.~it consists of 2 vertices and 1 edge), we deduce that $p$ is a path of length 1.
    That is, $p$ is an edge, hence $g$ is a graph map.
  \end{proof}
  Recall that a functor which reflects isomorphisms also reflects any (co)limits which it preserves.
  Thus,
  \begin{corollary} \label{thm:nerve_ref_fin_lim} \leavevmode
    \begin{enumerate}
        \item For $m \geq 1$, the $m$-nerve $\gnerve[m] \from \Graph \to \cSet$ reflects all limits.
        \item The nerve functor $\gnerve \from \Graph \to \cSet$ reflects finite limits. \noproof
    \end{enumerate} 
  \end{corollary}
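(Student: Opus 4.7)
The plan is to invoke the general categorical fact cited immediately before the corollary: a functor that reflects isomorphisms reflects any (co)limits it preserves. So the proof reduces to assembling the two pieces for each functor.

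For part (1), I would note that $\gnerve[m] \from \Graph \to \cSet$ is by construction a right adjoint (to the $m$-realization functor $\reali[m]{\uvar}$), hence preserves all limits. Combined with the preceding proposition (that $\gnerve[m]$ reflects isomorphisms), the cited categorical fact gives that $\gnerve[m]$ reflects all limits.

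For part (2), the functor $\gnerve$ is not a right adjoint, but \cref{thm:nerve_fin_lim} tells us that it preserves finite limits (as a filtered colimit of the right adjoints $\gnerve[m]$). Together with the reflection of isomorphisms established in the preceding proposition, the same categorical fact yields that $\gnerve$ reflects finite limits.

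There is no real obstacle here; the work has already been done in \cref{thm:nerve_fin_lim} and in the preceding proposition on reflection of isomorphisms. The only thing to be careful about is not overstating part (2): because $\gnerve$ only preserves finite limits (not arbitrary ones), the reflection statement is correspondingly restricted to finite limits.
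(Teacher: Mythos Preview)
Your proposal is correct and matches the paper's approach exactly: the corollary is marked \texttt{\textbackslash noproof} and is introduced by the sentence ``Recall that a functor which reflects isomorphisms also reflects any (co)limits which it preserves,'' so the intended argument is precisely the one you give---combine the preceding proposition on reflection of isomorphisms with preservation of limits (all limits for the right adjoint $\gnerve[m]$, finite limits for $\gnerve$ via \cref{thm:nerve_fin_lim}).
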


\section{Main result} \label{sec:main}

\subsection*{Statement}
Our main theorem is the following.
\begin{restatable}{theorem}{mainthm} \label{thm:main}
    For any graph $G$,
    \begin{enumerate}
        \item the nerve $\gnerve G$ of $G$ is a Kan complex;
        \item the natural inclusion $\gnerve[1] G \ito \gnerve G$ is anodyne.
    \end{enumerate}
\end{restatable}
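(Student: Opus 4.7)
My plan is to reduce both parts to a single extension lemma for graph maps on hypercubes, and then use the colimit presentation $\gnerve G \cong \colim_m \gnerve[m] G$ from \cref{thm:nerve_colim} to organize each part as a combinatorial filtration argument.

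The central extension lemma to establish is the following: for every graph map $\phi \from \reali[m]{\dfobox} \to G$, there exist $M \geq m$ and a graph map $\psi \from \gexp{I_M}{n} \to G$ extending the composite $\reali[M]{\dfobox} \to \reali[m]{\dfobox} \xrightarrow{\phi} G$, where the first arrow is induced by iterating the surjections $l$ and $r$. The proof proceeds by induction on $n$. The base case $n=1$ is immediate since $\reali[m]{\obox{1}{i,\varepsilon}}$ is a single vertex, so a constant map suffices. For the inductive step, one uses \cref{thm:nerve_conc} to glue together partial filler cubes along the boundary, inserting additional copies of $I_m$ as padding in each coordinate direction so that successive concatenations have the room required to interpolate between the given boundary pieces and produce actual graph homomorphisms on the enlarged cubes.

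Granting this extension result, part (1) is immediate. A lifting problem $\dfobox \to \gnerve G$ factors through some $\gnerve[m] G$ by finite presentation of the source and filteredness of the colimit; via adjointness this is a graph map $\reali[m]{\dfobox} \to G$; the extension lemma then produces the required filler in $\gnerve[M] G$, which maps into $\gnerve G$. For part (2), since saturated classes of maps are closed under transfinite composition, it suffices to show that each $l^*, r^* \from \gnerve[m] G \ito \gnerve[m+1] G$ is anodyne. I will do this by constructing a relative skeletal filtration of $\gnerve[m+1] G$ over $\gnerve[m] G$ whose attaching maps are coproducts of open box inclusions, indexed over the non-degenerate cubes of $\gnerve[m+1] G$ not lying in the image. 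The combinatorial structure of $\gexp{I_{m+1}}{n}$ relative to $\gexp{I_m}{n}$ determines, for each such new cube, which face plays the role of the missing $(i,\varepsilon)$-face, reducing each attachment to a pushout along an open box inclusion.

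The main obstacle is the extension lemma, and in particular carrying out the induction on $n$ uniformly across the cases $(i,\varepsilon)$: one must describe explicitly how to concatenate intermediate fillers, how much padding is needed at each stage, and how to verify graph-homomorphism compatibility at every seam. A secondary subtlety in the filtration step is to choose a well-founded order on new cells so that, when each is attached, its prescribed missing-face data genuinely lies within the subcomplex already constructed; this is what makes each attachment formally a pushout against an open box inclusion rather than against some more complicated subobject.
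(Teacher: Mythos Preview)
Your overall architecture matches the paper's: reduce part~(1) to an extension lemma via the filtered colimit presentation, and prove part~(2) by exhibiting each $l^*,r^*$ as a transfinite composite of pushouts of open box inclusions. The differences are in how each half is actually carried out.

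For part~(1), the paper does not argue by induction on $n$. It writes down a single explicit retraction $\Phi\colon \gexp{I_{3m}}{n}\to \reali[m]{\dfobox}$ (so $M=3m$ uniformly) using a ``distance to the central subcube'' function $d$ and a truncation $\beta[t]$; \cref{thm:cube_obox_map} checks the triangle commutes. Your proposed induction on $n$ leaning on \cref{thm:nerve_conc} is not obviously workable: that result produces $2$-cubes witnessing concatenation of $1$-cubes, and gives no control over higher-dimensional compatibility. An inductive filling would have to produce, for each slice in one coordinate, an $(n-1)$-dimensional filler that varies as a graph map in the remaining coordinate, and nothing in the induction hypothesis supplies that coherence. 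The paper's closed-form $\Phi$ sidesteps this entirely.

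For part~(2), your description contains a genuine gap. You propose to attach each new $n$-cube of $\gnerve[m+1]G$ along an open box $\dfobox\hookrightarrow\cube{n}$, with one of its own faces playing the role of the missing face. But in general \emph{all} faces of a new $n$-cube already lie in the image of $l^*$ (e.g., both endpoints of a new $1$-cube are vertices, hence already present), so such an attachment would freely adjoin a new face rather than glue onto existing data. The paper's key maneuver is to go up one dimension: for each new $n$-cube $x(\gexp{\id}{j+1}\gtimes\gexp{l}{n-j-1})$ it builds an explicit $(n{+}1)$-cube $x\lstepbar{m,j}\colon\gexp{I_{m+1}}{n+1}\to G$ whose $\face{}{n+1,1}$-face is the new cube and whose remaining faces are shown (\cref{thm:lstep_face_inner,thm:lstep_face_outer,thm:lanodyne_factor_lemma}) to lie in the previous stage. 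The filtration is then doubly indexed by $(n,j)$, and each step is a pushout along $\obox{n+1}{n+1,1}\hookrightarrow\cube{n+1}$. Your well-founded ordering concern is real, and this is precisely what the $(n,j)$ indexing and the factorization lemmas resolve.
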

Before proving \cref{thm:main}, we explain the proof strategy and establish some auxilliary lemmas.

To show that the nerve of any graph is a Kan complex, we construct a map $\f \from \gexp{I_{3m}}{n} \to \reali[m]{\dfobox}$ so that the triangle
\[ \begin{tikzcd}
    \reali[3m]{\dfobox} \ar[r, "{c_*^m}"] \ar[d, hook] & \reali[m]{\dfobox} \\
    \gexp{I_{3m}}{n} \ar[ur, dotted, "\f"]
\end{tikzcd} \]
commutes.
We show that every map $\dfobox \to \gnerve G$ must factor through some $m$-nerve $\dfobox \to \gnerve[m] G \to \gnerve G$.
The map $\f$ gives a filler for the composite $\dfobox \to \gnerve[m] G \to \gnerve[3m] G$, thus a filler in $\gnerve G$.

To show that the map $\gnerve[1] G \ito \gnerve G$ is anodyne, we show the maps $l^*, r^* \from \gnerve[m] G \ito \gnerve[m+1] G$ are anodyne.
This is done by an explicit construction establishing $l^*$ and $r^*$ as a transfinite composition of pushouts along coproducts of open box inclusions.
This implies $\gnerve[1] G \ito \gnerve G$ is anodyne as the nerve of $G$ is a transfinite composition of $l^*$ and $r^*$.

\subsection*{Proof of part (1)}
\begin{construction}
    Fix $m, n \geq 1$, $i \in \{ 1, \dots, n \}$, and $\varepsilon \in \{ 0, 1 \}$.
    We construct the map $\f \from \gexp{I_{3m}}{n} \to \reali[m]{\dfobox}$. For $n = 1$, we compute $\reali[m]{\dfobox[1]} \cong I_0$.
    In this case, the map $\f$ is immediate. 
    Thus, we assume $n \geq 2$.

    Define a map $\varphi \from I_{3m} \to I_{3m}$ by
    \[ \varphi v := \begin{cases}
        m & v \leq m \\
        i & m \leq v \leq 2m \\
        2m & v \geq 2m.
    \end{cases} \]
    It is straightforward to verify this definition gives a graph map.
    From this, we have a map $\gexp{\varphi}{n-1} \from \gexp{I_{3m}}{n-1} \to \gexp{I_{3m}}{n-1}$ which sends $(v_1, \dots, v_{n-1})$ to $(\varphi v_1, \dots, \varphi v_{n-1})$.
    
    For $v = (v_1, \dots, v_{n-1}) \in \gexp{I_{3m}}{n-1}$, let $d(v)$ denote the node distance between $v$ and $\gexp{\varphi}{n-1}v$.
    That is,
    \[ d(v) := \sum\limits_{i=1}^{n-1} |v_i - \varphi v_i|. \]
    Observe this gives a graph map $d \from \gexp{I_{3m}}{n-1} \to I_\infty$.

    \begin{figure}[ht]
        \centering
        \begin{tikzpicture}[node distance=20pt]
            \node[vertex, minimum size=16pt] (00) {4};
            \node[vertex, minimum size=16pt] (01) [right=of 00] {3};
            \node[vertex, minimum size=16pt] (02) [right=of 01] {2};
            \node[vertex, minimum size=16pt] (03) [right=of 02] {2};
            \node[vertex, minimum size=16pt] (04) [right=of 03] {2};
            \node[vertex, minimum size=16pt] (05) [right=of 04] {3};
            \node[vertex, minimum size=16pt] (06) [right=of 05] {4};
            \node[vertex, minimum size=16pt] (10) [below=of 00] {3};
            \node[vertex, minimum size=16pt] (11) [right=of 10] {2};
            \node[vertex, minimum size=16pt] (12) [right=of 11] {1};
            \node[vertex, minimum size=16pt] (13) [right=of 12] {1};
            \node[vertex, minimum size=16pt] (14) [right=of 13] {1};
            \node[vertex, minimum size=16pt] (15) [right=of 14] {2};
            \node[vertex, minimum size=16pt] (16) [right=of 15] {3};
            \node[vertex, minimum size=16pt] (20) [below=of 10] {2};
            \node[vertex, minimum size=16pt] (21) [right=of 20] {1};
            \node[vertex, minimum size=16pt] (22) [right=of 21] {0};
            \node[vertex, minimum size=16pt] (23) [right=of 22] {0};
            \node[vertex, minimum size=16pt] (24) [right=of 23] {0};
            \node[vertex, minimum size=16pt] (25) [right=of 24] {1};
            \node[vertex, minimum size=16pt] (26) [right=of 25] {2};
            \node[vertex, minimum size=16pt] (30) [below=of 20] {2};
            \node[vertex, minimum size=16pt] (31) [right=of 30] {1};
            \node[vertex, minimum size=16pt] (32) [right=of 31] {0};
            \node[vertex, minimum size=16pt] (33) [right=of 32] {0};
            \node[vertex, minimum size=16pt] (34) [right=of 33] {0};
            \node[vertex, minimum size=16pt] (35) [right=of 34] {1};
            \node[vertex, minimum size=16pt] (36) [right=of 35] {2};
            \node[vertex, minimum size=16pt] (40) [below=of 30] {2};
            \node[vertex, minimum size=16pt] (41) [right=of 40] {1};
            \node[vertex, minimum size=16pt] (42) [right=of 41] {0};
            \node[vertex, minimum size=16pt] (43) [right=of 42] {0};
            \node[vertex, minimum size=16pt] (44) [right=of 43] {0};
            \node[vertex, minimum size=16pt] (45) [right=of 44] {1};
            \node[vertex, minimum size=16pt] (46) [right=of 45] {2};
            \node[vertex, minimum size=16pt] (50) [below=of 40] {3};
            \node[vertex, minimum size=16pt] (51) [right=of 50] {2};
            \node[vertex, minimum size=16pt] (52) [right=of 51] {1};
            \node[vertex, minimum size=16pt] (53) [right=of 52] {1};
            \node[vertex, minimum size=16pt] (54) [right=of 53] {1};
            \node[vertex, minimum size=16pt] (55) [right=of 54] {2};
            \node[vertex, minimum size=16pt] (56) [right=of 55] {3};
            \node[vertex, minimum size=16pt] (60) [below=of 50] {4};
            \node[vertex, minimum size=16pt] (61) [right=of 60] {3};
            \node[vertex, minimum size=16pt] (62) [right=of 61] {2};
            \node[vertex, minimum size=16pt] (63) [right=of 62] {2};
            \node[vertex, minimum size=16pt] (64) [right=of 63] {2};
            \node[vertex, minimum size=16pt] (65) [right=of 64] {3};
            \node[vertex, minimum size=16pt] (66) [right=of 65] {4};
            \draw (00) to (01) to (02) to (03) to (04) to (05) to (06);
            \draw (10) to (11) to (12) to (13) to (14) to (15) to (16);
            \draw (20) to (21) to (22) to (23) to (24) to (25) to (26);
            \draw (30) to (31) to (32) to (33) to (34) to (35) to (36);
            \draw (40) to (41) to (42) to (43) to (44) to (45) to (46);
            \draw (50) to (51) to (52) to (53) to (54) to (55) to (56);
            \draw (60) to (61) to (62) to (63) to (64) to (65) to (66);
            \draw (00) to (10) to (20) to (30) to (40) to (50) to (60);
            \draw (01) to (11) to (21) to (31) to (41) to (51) to (61);
            \draw (02) to (12) to (22) to (32) to (42) to (52) to (62);
            \draw (03) to (13) to (23) to (33) to (43) to (53) to (63);
            \draw (04) to (14) to (24) to (34) to (44) to (54) to (64);
            \draw (05) to (15) to (25) to (35) to (45) to (55) to (65);
            \draw (06) to (16) to (26) to (36) to (46) to (56) to (66);
        \end{tikzpicture}
        \caption{Vertices of $\gexp{I_6}{2}$ labelled by their image under $d \from \gexp{I_6}{2} \to I_\infty$.}
    \end{figure}

    For $t \geq 0$, we have a graph map $\bound{t} \from I_\infty \to I_{t}$ defined by
    \[ \bound{t}(v) := \begin{cases}
        0 & v \leq 0 \\
        v & 0 \leq v \leq t \\
        t & v \geq t.
    \end{cases} \]
    One thinks of $\bound{t}$ as bounding the graph $I_\infty$ between 0 and $t$.
    Recall the map $c^m \from I_{3m} \to I_{m}$ is given by
    \[ c^m v := \begin{cases}
        0 & v \leq m \\
        v - m & m \leq v \leq 2m \\
        m & v \geq 2m.
    \end{cases} \]
    For a vertex $(v_1, \dots, v_n) \in \gexp{I_{3m}}{n}$, we write $\degen{}{i}v$ for the vertex $(v_1, \dots, v_{i-1}, v_{i+1}, \dots, v_n) \in \gexp{I_{3m}}{n-1}$.
    As well, let $\alpha^{v_i}_\varepsilon$ denote the value
    \[ \alpha^{v_i}_\varepsilon := \varepsilon m + (1 - 2\varepsilon)(c^m v_i). \]
    We may also write this as
    \[ \alpha^{v_i}_\varepsilon = \begin{cases}
        c^m v_i & \varepsilon = 0 \\
        m - c^m v_i & \varepsilon = 1.
    \end{cases} \]
    We define $\f \from \gexp{I_{3m}}{n} \to \reali[m]{\dfobox}$ by
    \[ \f (v_1, \dots, v_n) := (c^m v_1, \dots, c^m v_{i-1}, (1 - \varepsilon)m + (2\varepsilon - 1)(\bound{\alpha^{v_i}_{(1 - \varepsilon)}}(d(\degen{}{i}v) - \alpha^{v_i}_\varepsilon)), c^m v_{i+1}, \dots, c^m v_{n}). \]
    That is,
    \[ \f (v_1, \dots, v_n) = \begin{cases}
        (c^m v_1, \dots, c^m v_{i-1}, m - \bound{m - c^m v_i}(d(\degen{}{i}v) - c^m v_i), c^m v_{i+1}, \dots, c^m v_{n}) & \text{ if } \varepsilon = 0 \\
        (c^m v_1, \dots, c^m v_{i-1}, \bound{c^m v_i}(d(\degen{}{i}v) + c^m v_i - m), c^m v_{i+1}, \dots, c^m v_{n}) & \text{ if } \varepsilon = 1.
    \end{cases} \]
    We first show this formula is well-defined, i.e.~that this tuple lies in the subgraph $\reali[m]{\dfobox}$ of $\gexp{I_m}{n}$.
    Observe that if $c^m v_k = 0$ or $m$ for some $k \neq i$ in $\{ 1, \dots, n \}$ then this tuple indeed lies in $\reali[m]{\dfobox}$.
    For $k < i$, this tuple lies on the $(k, 0)$ or $(k, 1)$-face.
    For $k > i$, this tuple lies on the $(k+1,0)$ or $(k+1,1)$-face.
    Otherwise, if $0 < c^m v_k < m$ for all $k \neq i$ then $d(\degen{}{i}v) = 0$ since $v_k = \varphi v_k$ for all $k \neq i$.
    From this, it follows that 
    \[ \bound{\alpha^{v_i}_{1 - \varepsilon}}(d(\degen{}{i}v) - \alpha^{v_i}_{\varepsilon}) = \bound{\alpha^{v_i}_{1 - \varepsilon}}(-\alpha^{v_i}_{\varepsilon}) = 0, \]
    thus $\f (v_1, \dots, v_n)$ lies on the $(i,1-\varepsilon)$-face; that is, the face opposite the missing face.
    
    To see this formula gives a graph map, suppose $(v_1, \dots, v_n)$ and $(w_1, \dots, w_n)$ are connected vertices in $\gexp{I_{3m}}{n}$.
    By definition, there exists $k = 1, \dots, n$ so that $v_k$ and $w_k$ are connected in $I_{3m}$ and $v_j = w_j$ for all $j \neq k$.
    We first consider the case where $k \neq i$.
    Observe that if $c^m v_k = c^m w_k$ then this is immediate.
    If $c^m v_k \neq c^m w_k$ then $\degen{}{i}v = \varphi(\degen{}{i}v)$.
    This gives that $d(\degen{}{i}v) = d(\degen{}{i}w)$, hence $\f(v_1, \dots, v_n)$ and $\f(w_1, \dots, w_n)$ are equal on all components except the $k$-th component.
    That is, they are connected.
    In the case where $k = i$, we have that $d v$ and $d w$ differ by at most 1.
    This implies $(1 - e)m + (2\varepsilon - 1)(\bound{\alpha^{v_i}_{1-\varepsilon}}(dv - \alpha^{v_i}_\varepsilon))$ and $(1 - e)m + (2\varepsilon - 1)(\bound{\alpha^{v_i}_{1-\varepsilon}}(dw - \alpha^{v_i}_\varepsilon))$ differ by at most 1, thus $\f (v_1, \dots, v_n)$ and $\f (w_1, \dots, w_n)$ are connected.
\end{construction}

\begin{example}
    We look at the map $\f \from \gexp{I_{3m}}{n} \to \reali[m]{\dfobox}$ in the case of $n = 3$, $m = 2$, and $(i,\varepsilon) = (3, 1)$.
    \begin{figure}[ht]
        \centering
        \begin{tikzpicture}[node distance=20pt]
            \node[vertex, fill=red] (BTL) {};
            \node[vertex, fill=orange] (BTM) [right=of BTL] {};
            \node[vertex, fill=yellow] (BTR) [right=of BTM] {};
            \node[vertex, fill=black!40!red] (BML) [below=of BTL] {};
            \node[vertex, fill=black!40!orange] (BMM) [right=of BML] {};
            \node[vertex, fill=black!40!yellow] (BMR) [right=of BMM] {};
            \node[vertex, fill=white!50!red] (TL) [below=of BML] {};
            \node[vertex, fill=white!50!orange] (TM) [right=of TL] {};
            \node[vertex, fill=white!50!yellow] (TR) [right=of TM] {};
            \node[vertex, fill=red!20!white!70!pink] (ML) [below=of TL] {};
            \node[vertex, fill=white] (MM) [below=of TM] {};
            \node[vertex, fill=white!70!green] (MR) [below=of TR] {};
            \node[vertex, fill=white!70!violet] (BL) [below=of ML] {};
            \node[vertex, fill=white!70!blue] (BM) [below=of MM] {};
            \node[vertex, fill=white!70!cyan] (BR) [below=of MR] {};
            \node[vertex, fill=black!40!yellow] (RTM) [right=of TR] {};
            \node[vertex, fill=yellow] (RTR) [right=of RTM] {};
            \node[vertex, fill=black!40!green] (RMM) [right=of MR] {};
            \node[vertex, fill=green] (RMR) [right=of RMM] {};
            \node[vertex, fill=black!40!cyan] (RBM) [right=of BR] {};
            \node[vertex, fill=cyan] (RBR) [right=of RBM] {};
            \node[vertex, fill=black!40!red] (LTM) [left=of TL] {};
            \node[vertex, fill=red] (LTL) [left=of LTM] {};
            \node[vertex, fill=red!10!black!40!pink] (LMM) [left=of ML] {};
            \node[vertex, fill=red!10!pink] (LML) [left=of LMM] {};
            \node[vertex, fill=black!40!violet] (LBM) [left=of BL] {};
            \node[vertex, fill=violet] (LBL) [left=of LBM] {};
            \node[vertex, fill=black!40!violet] (FML) [below=of BL] {};
            \node[vertex, fill=black!40!blue] (FMM) [below=of BM] {};
            \node[vertex, fill=black!40!cyan] (FMR) [below=of BR] {};
            \node[vertex, fill=violet] (FBL) [below=of FML] {};
            \node[vertex, fill=blue] (FBM) [below=of FMM] {};
            \node[vertex, fill=cyan] (FBR) [below=of FMR] {};
            \draw (BTL) to (BTM) 
                  (BTM) to (BTR)
                  (BTL) to (BML)
                  (BTM) to (BMM)
                  (BTR) to (BMR)
                  (BML) to (BMM)
                  (BMM) to (BMR)
                  (BML) to (TL)
                  (BMM) to (TM)
                  (BMR) to (TR)
                  (LTL) to (LTM) 
                  (LTM) to (TL)
                  (TL) to (TM)
                  (TM) to (TR)
                  (TR) to (RTM)
                  (RTM) to (RTR)
                  (LTL) to (LML)
                  (LTM) to (LMM)
                  (TL) to (ML)
                  (TM) to (MM)
                  (TR) to (MR)
                  (RTM) to (RMM)
                  (RTR) to (RMR)
                  (LML) to (LMM)
                  (LMM) to (ML)
                  (ML) to (MM)
                  (MM) to (MR)
                  (MR) to (RMM)
                  (RMM) to (RMR)
                  (LML) to (LBL)
                  (LMM) to (LBM)
                  (ML) to (BL)
                  (MM) to (BM)
                  (MR) to (BR)
                  (RMM) to (RBM)
                  (RMR) to (RBR)
                  (LBL) to (LBM)
                  (LBM) to (BL)
                  (BL) to (BM)
                  (BM) to (BR)
                  (BR) to (RBM)
                  (RBM) to (RBR)
                  (BL) to (FML) 
                  (BM) to (FMM)
                  (BR) to (FMR)
                  (FML) to (FMM)
                  (FMM) to (FMR)
                  (FML) to (FBL)
                  (FMM) to (FBM)
                  (FMR) to (FBR)
                  (FBL) to (FBM)
                  (FBM) to (FBR);
            \draw[dotted, shorten <=5pt, shorten >=5pt] (BTL) to (LTL);
            \draw[dotted, shorten <=5pt, shorten >=5pt] (BML) to (LTM);
            \draw[dotted, shorten <=5pt, shorten >=5pt] (BTR) to (RTR);
            \draw[dotted, shorten <=5pt, shorten >=5pt] (BMR) to (RTM);
            \draw[dotted, shorten <=5pt, shorten >=5pt] (LBL) to (FBL);
            \draw[dotted, shorten <=5pt, shorten >=5pt] (LBM) to (FML);
            \draw[dotted, shorten <=5pt, shorten >=5pt] (RBR) to (FBR);
            \draw[dotted, shorten <=5pt, shorten >=5pt] (RBM) to (FMR);
        \end{tikzpicture}
        \caption{The graph $\reali[2]{\obox{3}{3,1}}$ as a net. Vertices connected by a dotted line are identical.}
        \label{fig:obox}
    \end{figure}

    We may write the map $\f \from \gexp{I_6}{3} \to \reali[2]{\obox{3}{3,1}}$ as
    \[ \f (v_1, v_2, v_3) = \begin{cases}
        (c^2 v_1, c^2 v_2, 0) & \text{ if }v_3 \leq 2 \\
        (c^2 v_1, c^2 v_2, \beta[1](d(v_1, v_2) - 1)) & \text{ if } v_3 = 3 \\
        (c^2 v_1, c^2 v_2, \beta[2](d(v_1, v_2))) & \text{ if } v_3 \geq 4.
    \end{cases} \]
    \begin{figure}[ht]
        \centering
        \begin{subfigure}[b]{0.3\textwidth}
            \centering
            \begin{tikzpicture}[node distance=10pt]
                \node[vertex, fill=white!50!red] (00) {};
                \node[vertex, fill=white!50!red] (10) [right=of 00] {};
                \node[vertex, fill=white!50!red] (20) [right=of 10] {};
                \node[vertex, fill=white!50!orange] (30) [right=of 20] {};
                \node[vertex, fill=white!50!yellow] (40) [right=of 30] {};
                \node[vertex, fill=white!50!yellow] (50) [right=of 40] {};
                \node[vertex, fill=white!50!yellow] (60) [right=of 50] {};
                \node[vertex, fill=white!50!red] (01) [below=of 00] {};
                \node[vertex, fill=white!50!red] (11) [right=of 01] {};
                \node[vertex, fill=white!50!red] (21) [right=of 11] {};
                \node[vertex, fill=white!50!orange] (31) [right=of 21] {};
                \node[vertex, fill=white!50!yellow] (41) [right=of 31] {};
                \node[vertex, fill=white!50!yellow] (51) [right=of 41] {};
                \node[vertex, fill=white!50!yellow] (61) [right=of 51] {};
                \node[vertex, fill=white!50!red] (02) [below=of 01] {};
                \node[vertex, fill=white!50!red] (12) [right=of 02] {};
                \node[vertex, fill=white!50!red] (22) [right=of 12] {};
                \node[vertex, fill=white!50!orange] (32) [right=of 22] {};
                \node[vertex, fill=white!50!yellow] (42) [right=of 32] {};
                \node[vertex, fill=white!50!yellow] (52) [right=of 42] {};
                \node[vertex, fill=white!50!yellow] (62) [right=of 52] {};
                \node[vertex, fill=red!20!white!70!pink] (03) [below=of 02] {};
                \node[vertex, fill=red!20!white!70!pink] (13) [right=of 03] {};
                \node[vertex, fill=red!20!white!70!pink] (23) [right=of 13] {};
                \node[vertex] (33) [right=of 23] {};
                \node[vertex, fill=white!70!green] (43) [right=of 33] {};
                \node[vertex, fill=white!70!green] (53) [right=of 43] {};
                \node[vertex, fill=white!70!green] (63) [right=of 53] {};
                \node[vertex, fill=white!70!violet] (04) [below=of 03] {};
                \node[vertex, fill=white!70!violet] (14) [right=of 04] {};
                \node[vertex, fill=white!70!violet] (24) [right=of 14] {};
                \node[vertex, fill=white!70!blue] (34) [right=of 24] {};
                \node[vertex, fill=white!70!cyan] (44) [right=of 34] {};
                \node[vertex, fill=white!70!cyan] (54) [right=of 44] {};
                \node[vertex, fill=white!70!cyan] (64) [right=of 54] {};
                \node[vertex, fill=white!70!violet] (05) [below=of 04] {};
                \node[vertex, fill=white!70!violet] (15) [right=of 05] {};
                \node[vertex, fill=white!70!violet] (25) [right=of 15] {};
                \node[vertex, fill=white!70!blue] (35) [right=of 25] {};
                \node[vertex, fill=white!70!cyan] (45) [right=of 35] {};
                \node[vertex, fill=white!70!cyan] (55) [right=of 45] {};
                \node[vertex, fill=white!70!cyan] (65) [right=of 55] {};
                \node[vertex, fill=white!70!violet] (06) [below=of 05] {};
                \node[vertex, fill=white!70!violet] (16) [right=of 06] {};
                \node[vertex, fill=white!70!violet] (26) [right=of 16] {};
                \node[vertex, fill=white!70!blue] (36) [right=of 26] {};
                \node[vertex, fill=white!70!cyan] (46) [right=of 36] {};
                \node[vertex, fill=white!70!cyan] (56) [right=of 46] {};
                \node[vertex, fill=white!70!cyan] (66) [right=of 56] {};
                
                \draw (00) to (10) 
                      (10) to (20)
                      (20) to (30)
                      (30) to (40)
                      (40) to (50)
                      (50) to (60)
                      (00) to (01)
                      (10) to (11)
                      (20) to (21)
                      (30) to (31)
                      (40) to (41)
                      (50) to (51)
                      (60) to (61)
                      (01) to (11) 
                      (11) to (21)
                      (21) to (31)
                      (31) to (41)
                      (41) to (51)
                      (51) to (61)
                      (01) to (02)
                      (11) to (12)
                      (21) to (22)
                      (31) to (32)
                      (41) to (42)
                      (51) to (52)
                      (61) to (62)
                      (02) to (12) 
                      (12) to (22)
                      (22) to (32)
                      (32) to (42)
                      (42) to (52)
                      (52) to (62)
                      (02) to (03)
                      (12) to (13)
                      (22) to (23)
                      (32) to (33)
                      (42) to (43)
                      (52) to (53)
                      (62) to (63)
                      (03) to (13) 
                      (13) to (23)
                      (23) to (33)
                      (33) to (43)
                      (43) to (53)
                      (53) to (63)
                      (03) to (04)
                      (13) to (14)
                      (23) to (24)
                      (33) to (34)
                      (43) to (44)
                      (53) to (54)
                      (63) to (64)
                      (04) to (14) 
                      (14) to (24)
                      (24) to (34)
                      (34) to (44)
                      (44) to (54)
                      (54) to (64)
                      (04) to (05)
                      (14) to (15)
                      (24) to (25)
                      (34) to (35)
                      (44) to (45)
                      (54) to (55)
                      (64) to (65)
                      (05) to (15) 
                      (15) to (25)
                      (25) to (35)
                      (35) to (45)
                      (45) to (55)
                      (55) to (65)
                      (05) to (06)
                      (15) to (16)
                      (25) to (26)
                      (35) to (36)
                      (45) to (46)
                      (55) to (56)
                      (65) to (66)
                      (06) to (16) 
                      (16) to (26)
                      (26) to (36)
                      (36) to (46)
                      (46) to (56)
                      (56) to (66);
                \path (11) to node[anchor=center] (P1) {} (22)
                      (51) to node[anchor=center] (P2) {} (42) 
                      (55) to node[anchor=center] (P3) {} (44)
                      (15) to node[anchor=center] (P4) {} (24);
            \end{tikzpicture}
            \caption{$v_3 \leq 2$.}
        \end{subfigure}
        \hfill
        \begin{subfigure}[b]{0.3\textwidth}
            \centering
            \begin{tikzpicture}[node distance=10pt]
                \node[vertex, fill=black!40!red] (00) {};
                \node[vertex, fill=black!40!red] (10) [right=of 00] {};
                \node[vertex, fill=black!40!red] (20) [right=of 10] {};
                \node[vertex, fill=black!40!orange] (30) [right=of 20] {};
                \node[vertex, fill=black!40!yellow] (40) [right=of 30] {};
                \node[vertex, fill=black!40!yellow] (50) [right=of 40] {};
                \node[vertex, fill=black!40!yellow] (60) [right=of 50] {};
                \node[vertex, fill=black!40!red] (01) [below=of 00] {};
                \node[vertex, fill=black!40!red] (11) [right=of 01] {};
                \node[vertex, fill=white!50!red] (21) [right=of 11] {};
                \node[vertex, fill=white!50!orange] (31) [right=of 21] {};
                \node[vertex, fill=white!50!yellow] (41) [right=of 31] {};
                \node[vertex, fill=black!40!yellow] (51) [right=of 41] {};
                \node[vertex, fill=black!40!yellow] (61) [right=of 51] {};
                \node[vertex, fill=black!40!red] (02) [below=of 01] {};
                \node[vertex, fill=white!50!red] (12) [right=of 02] {};
                \node[vertex, fill=white!50!red] (22) [right=of 12] {};
                \node[vertex, fill=white!50!orange] (32) [right=of 22] {};
                \node[vertex, fill=white!50!yellow] (42) [right=of 32] {};
                \node[vertex, fill=white!50!yellow] (52) [right=of 42] {};
                \node[vertex, fill=black!40!yellow] (62) [right=of 52] {};
                \node[vertex, fill=red!10!black!40!pink] (03) [below=of 02] {};
                \node[vertex, fill=red!20!white!70!pink] (13) [right=of 03] {};
                \node[vertex, fill=red!20!white!70!pink] (23) [right=of 13] {};
                \node[vertex] (33) [right=of 23] {};
                \node[vertex, fill=white!70!green] (43) [right=of 33] {};
                \node[vertex, fill=white!70!green] (53) [right=of 43] {};
                \node[vertex, fill=black!40!green] (63) [right=of 53] {};
                \node[vertex, fill=black!40!violet] (04) [below=of 03] {};
                \node[vertex, fill=white!70!violet] (14) [right=of 04] {};
                \node[vertex, fill=white!70!violet] (24) [right=of 14] {};
                \node[vertex, fill=white!70!blue] (34) [right=of 24] {};
                \node[vertex, fill=white!70!cyan] (44) [right=of 34] {};
                \node[vertex, fill=white!70!cyan] (54) [right=of 44] {};
                \node[vertex, fill=black!40!cyan] (64) [right=of 54] {};
                \node[vertex, fill=black!40!violet] (05) [below=of 04] {};
                \node[vertex, fill=black!40!violet] (15) [right=of 05] {};
                \node[vertex, fill=white!70!violet] (25) [right=of 15] {};
                \node[vertex, fill=white!70!blue] (35) [right=of 25] {};
                \node[vertex, fill=white!70!cyan] (45) [right=of 35] {};
                \node[vertex, fill=black!40!cyan] (55) [right=of 45] {};
                \node[vertex, fill=black!40!cyan] (65) [right=of 55] {};
                \node[vertex, fill=black!40!violet] (06) [below=of 05] {};
                \node[vertex, fill=black!40!violet] (16) [right=of 06] {};
                \node[vertex, fill=black!40!violet] (26) [right=of 16] {};
                \node[vertex, fill=black!40!blue] (36) [right=of 26] {};
                \node[vertex, fill=black!40!cyan] (46) [right=of 36] {};
                \node[vertex, fill=black!40!cyan] (56) [right=of 46] {};
                \node[vertex, fill=black!40!cyan] (66) [right=of 56] {};
                
                \draw (00) to (10) 
                      (10) to (20)
                      (20) to (30)
                      (30) to (40)
                      (40) to (50)
                      (50) to (60)
                      (00) to (01)
                      (10) to (11)
                      (20) to (21)
                      (30) to (31)
                      (40) to (41)
                      (50) to (51)
                      (60) to (61)
                      (01) to (11) 
                      (11) to (21)
                      (21) to (31)
                      (31) to (41)
                      (41) to (51)
                      (51) to (61)
                      (01) to (02)
                      (11) to (12)
                      (21) to (22)
                      (31) to (32)
                      (41) to (42)
                      (51) to (52)
                      (61) to (62)
                      (02) to (12) 
                      (12) to (22)
                      (22) to (32)
                      (32) to (42)
                      (42) to (52)
                      (52) to (62)
                      (02) to (03)
                      (12) to (13)
                      (22) to (23)
                      (32) to (33)
                      (42) to (43)
                      (52) to (53)
                      (62) to (63)
                      (03) to (13) 
                      (13) to (23)
                      (23) to (33)
                      (33) to (43)
                      (43) to (53)
                      (53) to (63)
                      (03) to (04)
                      (13) to (14)
                      (23) to (24)
                      (33) to (34)
                      (43) to (44)
                      (53) to (54)
                      (63) to (64)
                      (04) to (14) 
                      (14) to (24)
                      (24) to (34)
                      (34) to (44)
                      (44) to (54)
                      (54) to (64)
                      (04) to (05)
                      (14) to (15)
                      (24) to (25)
                      (34) to (35)
                      (44) to (45)
                      (54) to (55)
                      (64) to (65)
                      (05) to (15) 
                      (15) to (25)
                      (25) to (35)
                      (35) to (45)
                      (45) to (55)
                      (55) to (65)
                      (05) to (06)
                      (15) to (16)
                      (25) to (26)
                      (35) to (36)
                      (45) to (46)
                      (55) to (56)
                      (65) to (66)
                      (06) to (16) 
                      (16) to (26)
                      (26) to (36)
                      (36) to (46)
                      (46) to (56)
                      (56) to (66);
            \end{tikzpicture}
            \caption{$v_3 = 3$}
        \end{subfigure}
        \hfill
        \begin{subfigure}[b]{0.3\textwidth}
            \centering
            \begin{tikzpicture}[node distance=10pt]
                \node[vertex, fill=red] (00) {};
                \node[vertex, fill=red] (10) [right=of 00] {};
                \node[vertex, fill=red] (20) [right=of 10] {};
                \node[vertex, fill=orange] (30) [right=of 20] {};
                \node[vertex, fill=yellow] (40) [right=of 30] {};
                \node[vertex, fill=yellow] (50) [right=of 40] {};
                \node[vertex, fill=yellow] (60) [right=of 50] {};
                \node[vertex, fill=red] (01) [below=of 00] {};
                \node[vertex, fill=red] (11) [right=of 01] {};
                \node[vertex, fill=black!40!red] (21) [right=of 11] {};
                \node[vertex, fill=black!40!orange] (31) [right=of 21] {};
                \node[vertex, fill=black!40!yellow] (41) [right=of 31] {};
                \node[vertex, fill=yellow] (51) [right=of 41] {};
                \node[vertex, fill=yellow] (61) [right=of 51] {};
                \node[vertex, fill=red] (02) [below=of 01] {};
                \node[vertex, fill=black!40!red] (12) [right=of 02] {};
                \node[vertex, fill=white!50!red] (22) [right=of 12] {};
                \node[vertex, fill=white!50!orange] (32) [right=of 22] {};
                \node[vertex, fill=white!50!yellow] (42) [right=of 32] {};
                \node[vertex, fill=black!40!yellow] (52) [right=of 42] {};
                \node[vertex, fill=yellow] (62) [right=of 52] {};
                \node[vertex, fill=red!10!pink] (03) [below=of 02] {};
                \node[vertex, fill=red!10!black!40!pink] (13) [right=of 03] {};
                \node[vertex, fill=red!20!white!70!pink] (23) [right=of 13] {};
                \node[vertex] (33) [right=of 23] {};
                \node[vertex, fill=white!70!green] (43) [right=of 33] {};
                \node[vertex, fill=black!40!green] (53) [right=of 43] {};
                \node[vertex, fill=green] (63) [right=of 53] {};
                \node[vertex, fill=violet] (04) [below=of 03] {};
                \node[vertex, fill=black!40!violet] (14) [right=of 04] {};
                \node[vertex, fill=white!70!violet] (24) [right=of 14] {};
                \node[vertex, fill=white!70!blue] (34) [right=of 24] {};
                \node[vertex, fill=white!70!cyan] (44) [right=of 34] {};
                \node[vertex, fill=black!40!cyan] (54) [right=of 44] {};
                \node[vertex, fill=cyan] (64) [right=of 54] {};
                \node[vertex, fill=violet] (05) [below=of 04] {};
                \node[vertex, fill=violet] (15) [right=of 05] {};
                \node[vertex, fill=black!40!violet] (25) [right=of 15] {};
                \node[vertex, fill=black!40!blue] (35) [right=of 25] {};
                \node[vertex, fill=black!40!cyan] (45) [right=of 35] {};
                \node[vertex, fill=cyan] (55) [right=of 45] {};
                \node[vertex, fill=cyan] (65) [right=of 55] {};
                \node[vertex, fill=violet] (06) [below=of 05] {};
                \node[vertex, fill=violet] (16) [right=of 06] {};
                \node[vertex, fill=violet] (26) [right=of 16] {};
                \node[vertex, fill=blue] (36) [right=of 26] {};
                \node[vertex, fill=cyan] (46) [right=of 36] {};
                \node[vertex, fill=cyan] (56) [right=of 46] {};
                \node[vertex, fill=cyan] (66) [right=of 56] {};
                
                \draw (00) to (10) 
                      (10) to (20)
                      (20) to (30)
                      (30) to (40)
                      (40) to (50)
                      (50) to (60)
                      (00) to (01)
                      (10) to (11)
                      (20) to (21)
                      (30) to (31)
                      (40) to (41)
                      (50) to (51)
                      (60) to (61)
                      (01) to (11) 
                      (11) to (21)
                      (21) to (31)
                      (31) to (41)
                      (41) to (51)
                      (51) to (61)
                      (01) to (02)
                      (11) to (12)
                      (21) to (22)
                      (31) to (32)
                      (41) to (42)
                      (51) to (52)
                      (61) to (62)
                      (02) to (12) 
                      (12) to (22)
                      (22) to (32)
                      (32) to (42)
                      (42) to (52)
                      (52) to (62)
                      (02) to (03)
                      (12) to (13)
                      (22) to (23)
                      (32) to (33)
                      (42) to (43)
                      (52) to (53)
                      (62) to (63)
                      (03) to (13) 
                      (13) to (23)
                      (23) to (33)
                      (33) to (43)
                      (43) to (53)
                      (53) to (63)
                      (03) to (04)
                      (13) to (14)
                      (23) to (24)
                      (33) to (34)
                      (43) to (44)
                      (53) to (54)
                      (63) to (64)
                      (04) to (14) 
                      (14) to (24)
                      (24) to (34)
                      (34) to (44)
                      (44) to (54)
                      (54) to (64)
                      (04) to (05)
                      (14) to (15)
                      (24) to (25)
                      (34) to (35)
                      (44) to (45)
                      (54) to (55)
                      (64) to (65)
                      (05) to (15) 
                      (15) to (25)
                      (25) to (35)
                      (35) to (45)
                      (45) to (55)
                      (55) to (65)
                      (05) to (06)
                      (15) to (16)
                      (25) to (26)
                      (35) to (36)
                      (45) to (46)
                      (55) to (56)
                      (65) to (66)
                      (06) to (16) 
                      (16) to (26)
                      (26) to (36)
                      (36) to (46)
                      (46) to (56)
                      (56) to (66);
            \end{tikzpicture}
            \caption{$v_3 \geq 4$}
        \end{subfigure}
        \caption{The map $\f \from \gexp{I_6}{3} \to \reali[2]{\obox{3}{3,1}}$ split into cross-sections. Vertices are colored by their image as in \cref{fig:obox}.}
    \end{figure}
    If $v_3 \leq 2$ then $(v_1, v_2, v_3)$ is contained in the $\face{}{3,0}$-face of $\reali[2]{\obox{3}{3,1}}$, which is the face opposite the missing face.
    For the cross-section where $v_3 = 3$, if $d(v_1, v_2) \geq 2$ then $(v_1, v_2, v_3)$ is sent to a vertex in the $v_3 = 1$ cross-section of $\reali[2]{\obox{3}{3,1}}$.
    For $v_3 \geq 3$, if $d(v_1, v_2) = 1$ then $(v_1, v_2, v_3)$ is sent to a vertex in the $v_3 = 1$ cross-section.
    If $d(v_1, v_2) \geq 2$ then $(v_1, v_2, v_3)$ is sent to a vertex in the $v_3 = 2$ cross-section.
    In all three cross-section, if $d(v_1, v_2) = 0$ then $(v_1, v_2, v_3)$ is contained in the face opposite the missing face of $\reali[2]{\obox{3}{3,1}}$.
\end{example}
\begin{lemma} \label{thm:cube_obox_map}
    The diagram
    \[ \begin{tikzcd}
        \reali[3m]{\dfobox} \ar[r, "(c_*)^m"] \ar[d, hook] & \reali[m]{\dfobox} \\
        \gexp{I_{3m}}{n} \ar[ur, "\f"]
    \end{tikzcd} \]
    commutes.
\end{lemma}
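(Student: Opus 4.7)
The plan is to verify the identity at the level of vertices. Both $\f$ precomposed with the inclusion $\reali[3m]{\dfobox} \ito \gexp{I_{3m}}{n}$ and the map $(c_*)^m$ are graph maps, so they are determined by their action on vertex sets. Under the canonical identification $\reali[3m]{\cube{n}} \iso \gexp{I_{3m}}{n}$, the map $(c_*)^m$ acts coordinatewise by $c^m(v) = \min(m, \max(0, v-m))$, sending $(v_1, \ldots, v_n) \mapsto (c^m v_1, \ldots, c^m v_n)$. Moreover, the formula defining $\f$ already agrees with this on every coordinate $k \neq i$. Thus the whole lemma reduces to showing that, for each vertex $v = (v_1, \ldots, v_n)$ of the subgraph $\reali[3m]{\dfobox}$, the $i$-th component of $\f(v)$ equals $c^m v_i$.

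To this end, recall that a vertex $v$ lies in $\reali[3m]{\dfobox}$ exactly when either (A) $v_i = (1-\varepsilon)\cdot 3m$, so that $v$ sits on the face opposite the missing one, or (B) $v_k \in \{0, 3m\}$ for some $k \neq i$. In case (A), a direct computation gives $c^m v_i = (1-\varepsilon)m$, hence $\alpha^{v_i}_{1-\varepsilon} = 0$; since $\bound{0}$ is identically zero, the $i$-th component of $\f(v)$ collapses to $(1-\varepsilon)m = c^m v_i$. In case (B), the boundary coordinate $v_k$ contributes $|v_k - \varphi v_k| = m$ to the node distance $d(\degen{}{i}v)$, so $d(\degen{}{i}v) \geq m$. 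Because $\alpha^{v_i}_{1-\varepsilon} \leq m$, the argument of $\bound{\alpha^{v_i}_{1-\varepsilon}}$ appearing in the formula for $\f$ exceeds its saturation threshold, and one then checks by direct computation, splitting on $\varepsilon \in \{0,1\}$, that the resulting value of the $i$-th component is exactly $c^m v_i$.

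The main obstacle is purely notational: keeping track of $\alpha^{v_i}_\varepsilon$, the sign $(2\varepsilon - 1)$, and the piecewise behaviour of $\bound{t}$ through the two values of $\varepsilon$. Once one isolates the two structural observations above -- that on the opposite face $\alpha^{v_i}_{1-\varepsilon}$ vanishes and so $\bound{0} \equiv 0$, and that elsewhere on the boundary $d(\degen{}{i}v) \geq m$ saturates $\bound{\alpha^{v_i}_{1-\varepsilon}}$ -- each case reduces to a short arithmetic verification, with no further geometric input required beyond the explicit formula set up in the construction.
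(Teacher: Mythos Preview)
Your proof is correct and follows the same case analysis as the paper: reduce to the $i$-th coordinate, then split according to whether $v$ lies on the face opposite the missing one or has some other coordinate $v_k \in \{0,3m\}$. Your handling of case~(A) is in fact cleaner than the paper's: you observe that $\alpha^{v_i}_{1-\varepsilon}=0$ and hence $\bound{0}\equiv 0$, which works regardless of $d(\degen{}{i}v)$, whereas the paper asserts $d(\degen{}{i}v)=0$ in this case, a claim that is not generally true (a vertex on the opposite face can still have other coordinates outside $[m,2m]$).
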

\begin{proof}
    For $n = 1$, we have $\reali[m]{\dfobox[1]} \cong I_0$.
    The diagram then commutes as $I_0$ is terminal in $\Graph$.

    For $n \geq 2$, fix $(v_1, \dots, v_n) \in \reali[3m]{\dfobox}$.
    It suffices to show
    \[ (1 - \varepsilon)m + (2\varepsilon - 1)(\bound{\alpha^{v_i}_{1 - \varepsilon}}(d(\degen{}{i}v) - \alpha^{v_i}_\varepsilon)) = c^m v_i. \]  
    If $v_i = (1 - \varepsilon)(3m)$ then $d(\degen{}{i}v) = 0$.
    Thus,
    \begin{align*}
        (1 - \varepsilon)m + (2\varepsilon - 1)(\bound{\alpha^{v_i}_{1 - \varepsilon}}(d(\degen{}{i}v) - \alpha^{v_i}_\varepsilon)) &= (1 - \varepsilon)m + (2\varepsilon - 1)(\bound{\alpha^{v_i}_{1 - \varepsilon}}(-\alpha^{v_i}_\varepsilon)) \\
        &= (1 - \varepsilon)m + (2\varepsilon - 1)(0) \\
        &= (1 - \varepsilon)m \\
        &= c^m ((1 - \varepsilon)(3m)) \\
        &= c^m v_i.
    \end{align*}
    Otherwise, if $v_i \neq (1 - \varepsilon)(3m)$ then $d(\degen{}{i}v) \geq m$ (since $v_k = 0, 3m$ for some $k \neq i$).
    For $\varepsilon = 0$, we compute
    \begin{align*}
        (1 - \varepsilon)m + (2\varepsilon - 1)(\bound{\alpha^{v_i}_{1 - \varepsilon}}(d(\degen{}{i}v) - \alpha^{v_i}_\varepsilon)) &= m - \bound{\alpha^{v_i}_1}(d(\degen{}{i}v) - \alpha^{v_i}_0) \\
        &= m - \bound{m - c^m v_i}(d(\degen{}{i}v) - c^m v_i) \\
        &= m - (m - c^m v_i) \\
        &= c^m v_i.
    \end{align*}
    For $\varepsilon = 1$, we compute
    \begin{align*}
        (1 - \varepsilon)m + (2\varepsilon - 1)(\bound{\alpha^{v_i}_{1 - \varepsilon}}(d(\degen{}{i}v) - \alpha^{v_i}_\varepsilon)) &= \bound{\alpha^{v_i}_0}(d(\degen{}{i}v) - \alpha^{v_i}_1) \\
        &= \bound{c^m v_i}(d(\degen{}{i}v) + c^m v_i - m) \\
        &= c^m v_i. \qedhere
    \end{align*}
\end{proof}
\begin{theorem} \label{thm:nerve_kan}
    For any graph $G$, the nerve $\gnerve G$ of $G$ is a Kan complex.
\end{theorem}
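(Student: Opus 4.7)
The plan is to construct, for any open box $\dfobox \to \gnerve G$, an extension to a cube $\cube{n} \to \gnerve G$ by reducing to a finite stage of the colimit $\gnerve G = \colim_m \gnerve[m] G$ (\cref{thm:nerve_colim}) and applying the map $\f$ constructed above. Since this colimit is filtered and $\dfobox$ has only finitely many non-degenerate cubes, any map $\dfobox \to \gnerve G$ factors through some $\gnerve[m] G$ for $m \geq 1$, so it suffices to produce a filler of the composite $\dfobox \to \gnerve[m] G \ito \gnerve G$.

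First, I transpose $\dfobox \to \gnerve[m] G$ across the adjunction $\reali[m]{\uvar} \adj \gnerve[m]$ to obtain a graph map $\reali[m]{\dfobox} \to G$. Pre-composing with $\f \from \gexp{I_{3m}}{n} \to \reali[m]{\dfobox}$ gives a graph map $\gexp{I_{3m}}{n} \to G$, which transposes back to a cube $\cube{n} \to \gnerve[3m] G$. Post-composing with the colimit inclusion $\gnerve[3m] G \ito \gnerve G$ yields the candidate filler.

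To verify that this cube restricts along $\dfobox \ito \cube{n}$ to the original open box, I transpose once more and observe by \cref{thm:cube_obox_map} that the restriction of $\gexp{I_{3m}}{n} \xrightarrow{\f} \reali[m]{\dfobox} \to G$ along $\reali[3m]{\dfobox} \ito \gexp{I_{3m}}{n}$ equals $\reali[3m]{\dfobox} \xrightarrow{(c_*)^m} \reali[m]{\dfobox} \to G$. Transposing back, the restriction of the candidate filler to $\dfobox$ is the composite $\dfobox \to \gnerve[m] G \xrightarrow{(c^*)^m} \gnerve[3m] G$. Since $(c^*)^m$ is built from the transition maps $l^*$ and $r^*$ of the filtered colimit, post-composing with $\gnerve[3m] G \ito \gnerve G$ recovers the direct inclusion $\gnerve[m] G \ito \gnerve G$, and so the restriction agrees with the original open box, completing the filler.

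The main technical content is already packaged in the construction of $\f$ and \cref{thm:cube_obox_map}. The obstacle I expect here is purely bookkeeping: carefully tracking the two adjoint transposes and the colimit transition maps to confirm that the candidate cube restricts to the original open box rather than to some shifted or off-by-one variant, and justifying the factorization of $\dfobox \to \gnerve G$ through a finite stage using that $\dfobox$ is a finite colimit of representables.
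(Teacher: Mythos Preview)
Your proposal is correct and follows essentially the same argument as the paper's proof: factor the open box through a finite stage $\gnerve[m] G$ using compactness of $\dfobox$, transpose across the adjunction $\reali[m]{\uvar} \adj \gnerve[m]$, use $\f$ together with \cref{thm:cube_obox_map} to produce a filler in $\gnerve[3m] G$, and include into $\gnerve G$. The only cosmetic difference is that the paper passes to the cofinal subdiagram indexed by odd $m$ with transition maps $c^*$, which makes the identification of $(c^*)^m$ with the colimit transition map immediate rather than requiring the commutativity $l^* r^* = r^* l^*$.
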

\begin{proof}
    Fix a map $f \from \dfobox \to \gnerve{G}$.
    We know that $\gnerve{G} \cong \colim(\gnerve[1]{G} \ito \gnerve[3]{G} \ito \gnerve[5]{G} \ito \dots)$ by \cref{thm:nerve_colim}.
    Recall that in a presheaf category, any map from a representable presheaf to a colimit must factor through some component of the colimit cone by the Yoneda lemma.
    Thus, for any $k \geq 0$ and $x \from \cube{k} \to \dfobox$, the map $fx \from \cube{k} \to \gnerve{G}$ factors through an inclusion $\gnerve[m] G \ito \gnerve G$ for some $m \geq 1$.
    As $\dfobox$ has only finitely many non-degenerate cubes, $f$ factors through the natural inclusion $\gnerve[m]{G} \ito \gnerve G$ as a map $g \from \dfobox \to \gnerve[m]{G}$ for some $m \geq 0$.

    By adjointness, $g$ corresponds to a map $\overline{g} \from \reali[m]{\dfobox} \to G$.
    \cref{thm:cube_obox_map} shows that $\overline{g}\f \from \gexp{I_{3m}}{n} \to G$ is a lift of the composite map $\overline{g}(c^*)^m  \from \reali[3m]{\dfobox} \to G$.
    \[ \begin{tikzcd}
        \reali[3m]{\dfobox} \ar[r, "(c_*)^m"] \ar[d, hook] & \reali[m]{\dfobox} \ar[r, "\overline{g}"] & G \\
        \reali[3m]{\cube{n}} \ar[ur, "\f"']
    \end{tikzcd} \]
    By adjointness, this gives a filler $\cube{n} \to \gnerve[3m]{G}$ for the map $(c^*)^m g \from \dfobox \to \gnerve[3m]{G}$.
    Post-composing with the natural inclusion $\gnerve[3m]{G} \ito \gnerve G$, this gives a filler $\cube{n} \to \gnerve{G}$ of $f$.
\end{proof}
    
Via \cref{thm:nerve_kan}, we may speak of the homotopy groups of $\gnerve G$.
We prove that the A-homotopy groups of $G$ are exactly the cubical homotopy groups of $\gnerve G$. 
\begin{theorem}\label{thm:a_eq_cset_pi}
    We have an isomorphism $A_n(G, v) \cong \pi_n(\gnerve{G}, v)$ natural in $(G, v)$.
\end{theorem}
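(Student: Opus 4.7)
The plan is to reduce to the case $n = 0$ by using the loop space descriptions of both sides, namely the graph-level isomorphism $A_n(G, v) \iso A_0(\loopsp[n](G, v))$ of \cite[Prop.~7.4]{babson-barcelo-longueville-laubenbacher} and its cubical counterpart \cref{thm:loopsp_pi}, combined with the fact that the nerve functor $\gnerve$ commutes with the loop space construction.

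The main step is to establish a natural isomorphism $\gnerve \loopsp(G, v) \iso \loopsp(\gnerve G, v)$ for any pointed graph $(G, v)$. Both sides arise as pullbacks: on the graph side, $\loopsp(G, v) \iso I_0 \pull_{G \times G} PG$ by \cref{thm:graph_loopsp_pb}, and on the cubical side, $\loopsp(\gnerve G, v) \iso \cube{0} \pull_{\gnerve G \times \gnerve G} \rhom(\cube{1}, \gnerve G)$ by definition. Since $\gnerve$ preserves finite limits (\cref{thm:nerve_fin_lim}) and sends $I_0$ to the terminal $\cube{0}$, it suffices to exhibit a natural isomorphism $\gnerve PG \iso \rhom(\cube{1}, \gnerve G)$ compatible with the endpoint maps to $\gnerve G \times \gnerve G$. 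Unfolding definitions, an $n$-cube of $\gnerve PG$ is a stable map $h \from \gexp{I_\infty}{n} \to PG$; by the tensor-hom adjunction in $\Graph$ and the definition of $PG$ as a subgraph of $\ghom{I_\infty}{G}$, such an $h$ corresponds to a map $\hat h \from \gexp{I_\infty}{n+1} \to G$ whose restriction $\hat h(v_1, \dots, v_n, \uvar)$ is stable for each $(v_1, \dots, v_n)$. The key observation is that stability of $h$ in its $n$ base directions makes the set $\{ h(v_1, \dots, v_n) \}$ essentially finite, so a uniform stability constant exists in the $(n+1)$st direction; hence $\hat h$ is stable in all $n + 1$ directions, i.e., an $(n+1)$-cube of $\gnerve G$, i.e., an $n$-cube of $\rhom(\cube{1}, \gnerve G)$. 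That this bijection intertwines cubical operators and endpoint maps is a direct check, since the "extra" direction $n+1$ plays the same role on both sides of the adjunction. Iterating yields $\gnerve \loopsp[n](G, v) \iso \loopsp[n](\gnerve G, v)$ naturally.

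It remains to verify the base case $n = 0$, where both $A_0(G, v)$ and $\pi_0(\gnerve G, v)$ are naturally the set of path components of $G$: a $0$-cube of $\gnerve G$ is a vertex of $G$, and two vertices are related by a $1$-cube precisely when connected by a stable map $I_\infty \to G$, i.e., by a path in $G$. Combining this with the previous step yields the chain of natural isomorphisms
\[ A_n(G, v) \iso A_0(\loopsp[n](G, v)) \iso \pi_0(\gnerve \loopsp[n](G, v)) \iso \pi_0(\loopsp[n](\gnerve G, v)) \iso \pi_n(\gnerve G, v). \]
Compatibility with the group structure at $n = 1$ follows from \cref{thm:nerve_conc}, which shows that concatenation of paths in $G$ is realized as a concatenation in $\gnerve G$; higher $n$ reduces to this case via the loop space iso. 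The main technical obstacle is the verification that $\gnerve PG \iso \rhom(\cube{1}, \gnerve G)$ is an isomorphism of cubical sets over $\gnerve G \times \gnerve G$; in particular, the stability argument relies crucially on the observation that a cube of $\gnerve PG$ already enforces stability in its $n$ base directions, which in turn constrains the pointwise values $h(v)$ to lie in a finite subset of $PG$.
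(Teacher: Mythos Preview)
Your proof is correct, but it takes a genuinely different route from the paper's.  The paper argues directly: by the very definition of $\gnerve G$, a relative graph map $(\gexp{I_\infty}{n}, \gexp{I_{\geq M}}{n}) \to (G, v)$ \emph{is} a relative cubical map $(\cube{n}, \bd\cube{n}) \to (\gnerve G, v)$, and likewise for relative homotopies; this gives the set bijection in one line, with \cref{thm:nerve_conc} supplying the group structure.  No loop spaces, no auxiliary isomorphism $\gnerve PG \iso \rhom(\cube{1}, \gnerve G)$.

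Your approach instead factors through the loop-space identifications on both sides and the commutation $\gnerve\loopsp \iso \loopsp\gnerve$.  This is more work up front---in particular your stability argument for $\gnerve PG \iso \rhom(\cube{1}, \gnerve G)$ is the real content, and it is correct: stability of $h$ in its $n$ base directions forces the image $\{h(v)\}$ to be finite as a subset of $PG$, whence a uniform stability bound in the $(n{+}1)$st direction.  The payoff is that you have essentially reproved \cref{thm:loopsp_graph_cset}, which the paper establishes separately in the consequences section; so your route surfaces that structural fact earlier and uses it, whereas the paper keeps the proof of \cref{thm:a_eq_cset_pi} as a bare definitional check and defers the loop-space compatibility.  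Both are valid; the paper's is shorter, yours is more conceptual.
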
 
\begin{proof}
    It is straightforward to verify that a relative graph map $(\gexp{I_\infty}{n}, \gexp{I_{\geq M}}{n}) \to (G, v)$ is exactly a relative cubical map $(\cube{n}, \bd \cube{n}) \to (\gnerve G, v)$ and that a relative homotopy between two such relative graph maps is exactly a relative homotopy between two such relative cubical maps.
    This gives a set bijection $A_n(G, v) \to \pi_n(\gnerve{G}, v)$.
    \cref{thm:nerve_conc} shows this map is a group homomorphism.
\end{proof}

\begin{remark}
  \cref{thm:a_eq_cset_pi} shows why the functor $\gnerve \from \Graph \to \cSet$ fails to preserve arbitrary limits, even though, by \cref{thm:nerve_fin_lim}, it preserves finite ones.
  To see that, we first note that 
  \[A_1\left(\prod\limits_{k \geq 5} (C_k, 0)\right) \cong \bigoplus\limits_{k \geq 5} \mathbb{Z}\text{,}\]
   since a map $I_\infty \to \prod_{k \geq 5} C_k$ that stabilizes outside of a finite interval must necessarily be null-homotopic in all but finitely many $C_k$'s.
   That is, $A_1 \from \Graph_* \to \mathsf{Grp}$ does not preserve infinite products.
  \cref{thm:a_eq_cset_pi} shows that $A_1 \cong \pi_1 \circ \gnerve$, and $\pi_1 \colon \cSet_* \to \mathsf{Grp}$ preserves infinite products by \cite[Prop.~4.1]{carranza-kapulkin:homotopy-cubical}.
  Thus, we conclude that
  \[  \gnerve\left(\prod\limits_{k \geq 5} (C_k, 0)\right) \not \cong \prod\limits_{k \geq 5} (\gnerve C_k, 0).  \]
\end{remark}

\subsection*{Proof of part (2)}
Now we show that the inclusions $l^*, r^* \from \gnerve[m] G \ito \gnerve[m+1] G$ are anodyne.
We first explain the intuition for why this statement holds.

The 1-nerve $\gnerve[1] G$ of a graph $G$ contains, as 1-cubes, all paths of length 1 in $G$ (i.e.~paths with 2 vertices and 1 edge).
Consider the image of the embedding $l^* \from \gnerve[1] G \to \gnerve[2] G$ as a cubical subset $X \subseteq \gnerve[2] G$ of the 2-nerve of $G$.
A 1-cube of $\gnerve[2] G$ is exactly a path of length 2 in $G$; a 1-cube of $X$ is a path of length 1 regarded as a path of length 2 whose first two vertices are the same.
Given a path $f \from I_2 \to G$ of length 2 in $G$, we may define a $3 \times 3$ square $g \from \gexp{I_2}{2} \to G$ in $G$ by
\[ g(v_1, v_2) := \begin{cases}
    f(l(v_1)) & v_2 < 2 \\
    f(v_1) & v_2 = 2.
\end{cases} \]
\begin{figure}[H]
    \centering
    \begin{tikzpicture}[node distance = 15pt]
        \node[vertex, fill=red] (0) {};
        \node[vertex, fill=yellow] (1) [right=of 0] {};
        \node[vertex, fill=cyan] (2) [right=of 1] {};

        \node (arrow) [right=of 2] {$\leadsto$};
        
        \node[vertex, fill=red] (01) [right=of arrow] {};
        \node[vertex, fill=red] (11) [right=of 01] {};
        \node[vertex, fill=yellow] (21) [right=of 11] {};

        \node[vertex, fill=red] (00) [above=of 01] {};
        \node[vertex, fill=red] (10) [right=of 00] {};
        \node[vertex, fill=yellow] (20) [right=of 10] {};
        
        \node[vertex, fill=red] (02) [below=of 01] {};
        \node[vertex, fill=yellow] (12) [right=of 02] {};
        \node[vertex, fill=cyan] (22) [right=of 12] {};

        \draw (0) to (1) to (2);

        \draw (00) to (01) to (02);
        \draw (10) to (11) to (12);
        \draw (20) to (21) to (22);
        \draw (00) to (10) to (20);
        \draw (01) to (11) to (21);
        \draw (02) to (12) to (22);
    \end{tikzpicture} 
    \caption{The square $g \from \gexp{I_2}{2} \to G$ constructed from the path $f \from I_2 \to G$.}
\end{figure}
Observe the $\face{}{1,0}$-, $\face{}{1,1}$-, and $\face{}{2,0}$-faces of $g$ are 1-cubes of $X$, i.e.~they are paths of length 2 whose first two vertices are the same, whereas the $\face{}{2,1}$-face of $g$ is $f$. 
That is, we have shown the restriction $\restr{g}{\reali[2]{\obox{2}{2,1}}} \from \reali[2]{\obox{2}{2,1}} \to G$ of $g$ to the open box corresponds to a map $\obox{2}{2,1} \to \gnerve[2] G$ whose image is contained in $X$.
Let $Y \subseteq \gnerve[2] G$ denote the cubical subset generated by $X$ and $g$; that is, the cubical subset containing $X$, the 2-cube $g \in (\gnerve[2] G)_2$, and all faces, degeneracies and connections of $g$.
The square
\[ \begin{tikzcd}
    \obox{2}{2,1} \ar[r, "{\restr{g}{\reali[2]{\obox{2}{2,1}}}}"] \ar[d, hook] \ar[rd, phantom, "\ulcorner" very near end] & X \ar[d, hook] \\
    \cube{2} \ar[r, "g"] & Y
\end{tikzcd} \]
is a pushout by definition of $Y$.
This gives exactly that the inclusion $X \ito Y$ is anodyne.
Following this approach, one may construct an anodyne inclusion $X \ito X_n$ such that $X_n$ contains all $n$-cubes of $\gnerve[2] G$.
With this, the colimit of the sequence $X \ito X_1 \ito X_2 \ito \dots$ is exactly $\gnerve[2] G$ and the inclusion $X \ito \gnerve[2] G$ is anodyne by closure under transfinite composition.

For $n > 0$ and $j \in \{ 0, \dots, n \}$, the maps $l, r \from I_{m+1} \to I_m$ yield maps 
\[ (\gexp{\id[I_{m+1}]}{j} \gtimes \gexp{l}{n-j}), (\gexp{\id[I_{m+1}]}{j} \gtimes \gexp{r}{n-j}) \from \gexp{I_{m+1}}{n} \to \gexp{I_{m+1}}{j} \gtimes \gexp{I_m}{n-j}. \]
For $j \in \{ 0, \dots, n-1 \}$, we define maps $\lstep{m,j}, \rstep{m,j} \from \gexp{I_{m+1}}{n} \gtimes I_1 \to \gexp{I_{m+1}}{j+1} \gtimes \gexp{I_m}{n-j-1}$ by
\begin{align*}
    \lstep{m,j}(v_1, \dots, v_n, v_{n+1}) &= \begin{cases}
        (v_1, \dots, v_j, lv_{j+1}, lv_{j+2}, \dots, lv_n) & v_{n+1} = 0 \\
        (v_1, \dots, v_{j+1}, lv_{j+2}, \dots, lv_n) & v_{n+1} = 1.
    \end{cases} \\
    \rstep{m,j}(v_1, \dots, v_n, v_{n+1}) &= \begin{cases}
        (v_1, \dots, v_j, rv_{j+1}, rv_{j+2}, \dots, rv_n) & v_{n+1} = 0 \\
        (v_1, \dots, v_{j+1}, rv_{j+2}, \dots, rv_n) & v_{n+1} = 1.
    \end{cases}
\end{align*}
That is, the restriction of $\lstep{m,j}$ to $\gexp{I_{m+1}}{n} \gtimes \{ 0 \}$ is the map $\gexp{\id[I_{m+1}]}{j} \gtimes \gexp{l}{n-j}$ and its restriction to $\gexp{I_{m+1}}{n} \gtimes \{ 1 \}$ is $\gexp{\id[I_{m+1}]}{j+1} \gtimes \gexp{l}{n-j-1}$ (likewise for $\rstep{m,j}$).
\begin{figure}[H]
    \centering
    \begin{tikzpicture}[node distance = 20pt]
        \node[vertex, minimum size=12pt] (00) {0};
        \node[vertex, minimum size=12pt] (10) [right=of 00] {0};
        \node[vertex, minimum size=12pt] (20) [right=of 10] {1};
        \node[vertex, minimum size=12pt] (01) [below=of 00] {0};
        \node[vertex, minimum size=12pt] (11) [right=of 01] {1};
        \node[vertex, minimum size=12pt] (21) [right=of 11] {2};

        \draw (00) to (10) to (20);
        \draw (01) to (11) to (21);
        \draw (00) to (01);
        \draw (10) to (11);
        \draw (20) to (21);
    \end{tikzpicture}
    \caption{The graph $I_2 \gtimes I_1$ with vertices labelled by their image under $\lstep[1]{1,0} \from I_2 \gtimes I_1 \to I_2$.}
\end{figure}
\begin{figure}[H]
    \centering
    \begin{subfigure}[h]{0.45\textwidth}
        \centering
        \begin{tikzpicture}[node distance=15pt]
            \node[vertex, minimum size=15pt] (00) {(0,0)};
            \node[vertex, minimum size=15pt] (10) [right=of 00] {(0,0)};
            \node[vertex, minimum size=15pt] (20) [right=of 10] {(1,0)};
            \node[vertex, minimum size=15pt] (01) [below=of 00] {(0,0)};
            \node[vertex, minimum size=15pt] (11) [right=of 01] {(0,0)};
            \node[vertex, minimum size=15pt] (21) [right=of 11] {(1,0)};
            \node[vertex, minimum size=15pt] (02) [below=of 01] {(0,1)};
            \node[vertex, minimum size=15pt] (12) [right=of 02] {(0,1)};
            \node[vertex, minimum size=15pt] (22) [right=of 12] {(1,1)};

            \draw (00) to (10) to (20);
            \draw (01) to (11) to (21);
            \draw (02) to (12) to (22);
            \draw (00) to (01) to (02);
            \draw (10) to (11) to (12);
            \draw (20) to (21) to (22);
        \end{tikzpicture}    
        \caption{The subgraph $\gexp{I_2}{2} \gprod \{0\}$ under $\lstep[2]{1,0}$}
    \end{subfigure}
    \begin{subfigure}[h]{0.45\textwidth}
        \centering
        \begin{tikzpicture}[node distance=15pt]
            \node[vertex, minimum size=15pt] (00) {(0,0)};
            \node[vertex, minimum size=15pt] (10) [right=of 00] {(1,0)};
            \node[vertex, minimum size=15pt] (20) [right=of 10] {(2,0)};
            \node[vertex, minimum size=15pt] (01) [below=of 00] {(0,0)};
            \node[vertex, minimum size=15pt] (11) [right=of 01] {(1,0)};
            \node[vertex, minimum size=15pt] (21) [right=of 11] {(2,0)};
            \node[vertex, minimum size=15pt] (02) [below=of 01] {(0,1)};
            \node[vertex, minimum size=15pt] (12) [right=of 02] {(1,1)};
            \node[vertex, minimum size=15pt] (22) [right=of 12] {(2,1)};

            \draw (00) to (10) to (20);
            \draw (01) to (11) to (21);
            \draw (02) to (12) to (22);
            \draw (00) to (01) to (02);
            \draw (10) to (11) to (12);
            \draw (20) to (21) to (22);
        \end{tikzpicture}
        \caption{The subgraph $\gexp{I_2}{2} \gprod \{1\}$ under $\lstep[2]{1,1}$}
    \end{subfigure}
    \begin{subfigure}[h]{0.45\textwidth}
        \centering
        \begin{tikzpicture}[node distance=15pt]
            \node[vertex, minimum size=15pt] (00) {(0,0)};
            \node[vertex, minimum size=15pt] (10) [right=of 00] {(1,0)};
            \node[vertex, minimum size=15pt] (20) [right=of 10] {(2,0)};
            \node[vertex, minimum size=15pt] (01) [below=of 00] {(0,0)};
            \node[vertex, minimum size=15pt] (11) [right=of 01] {(1,0)};
            \node[vertex, minimum size=15pt] (21) [right=of 11] {(2,0)};
            \node[vertex, minimum size=15pt] (02) [below=of 01] {(0,1)};
            \node[vertex, minimum size=15pt] (12) [right=of 02] {(1,1)};
            \node[vertex, minimum size=15pt] (22) [right=of 12] {(2,1)};

            \draw (00) to (10) to (20);
            \draw (01) to (11) to (21);
            \draw (02) to (12) to (22);
            \draw (00) to (01) to (02);
            \draw (10) to (11) to (12);
            \draw (20) to (21) to (22);
        \end{tikzpicture}    
        \caption{The subgraph $\gexp{I_2}{2} \gprod \{0\}$ under $\lstep[2]{1,1}$}
    \end{subfigure}
    \begin{subfigure}[h]{0.45\textwidth}
        \centering
        \begin{tikzpicture}[node distance=15pt]
            \node[vertex, minimum size=15pt] (00) {(0,0)};
            \node[vertex, minimum size=15pt] (10) [right=of 00] {(1,0)};
            \node[vertex, minimum size=15pt] (20) [right=of 10] {(2,0)};
            \node[vertex, minimum size=15pt] (01) [below=of 00] {(0,1)};
            \node[vertex, minimum size=15pt] (11) [right=of 01] {(1,1)};
            \node[vertex, minimum size=15pt] (21) [right=of 11] {(2,1)};
            \node[vertex, minimum size=15pt] (02) [below=of 01] {(0,2)};
            \node[vertex, minimum size=15pt] (12) [right=of 02] {(1,2)};
            \node[vertex, minimum size=15pt] (22) [right=of 12] {(2,2)};

            \draw (00) to (10) to (20);
            \draw (01) to (11) to (21);
            \draw (02) to (12) to (22);
            \draw (00) to (01) to (02);
            \draw (10) to (11) to (12);
            \draw (20) to (21) to (22);
        \end{tikzpicture}
        \caption{The subgraph $\gexp{I_2}{2} \gprod \{1\}$ under $\lstep[2]{1,1}$}
    \end{subfigure}
    \caption{Cross-sections of the graph $\gexp{I_2}{2} \gprod I_1$ with vertices labelled by their image under the maps $\lstep[2]{1,0} \from \gexp{I_2}{2} \gtimes I_1 \to I_2 \gtimes I_1$ and $\lstep[2]{1,1} \from \gexp{I_2}{2} \gtimes I_1 \to \gexp{I_2}{2}$.}
\end{figure}
    
    
The maps $l^m, r^m \from I_{m+1} \to I_1$ denote application of the maps $l, r$ a total of $m$ times.
That is,
\begin{align*}
    l^m(v) &= \begin{cases}
        0 & v < m+1 \\
        1 & v = m+1,
    \end{cases} \\
    r^m(v) &= \begin{cases}
        0 & v = 0 \\
        1 & v > 0.
    \end{cases}
\end{align*}
We write $\lstepbar{m,j} \from \gexp{I_{m+1}}{n+1} \to \gexp{I_{m+1}}{j+1} \gtimes \gexp{I_m}{n-j-1}$ for the composition of $\gexp{\id[I_{m+1}]}{n} \gtimes l^m \from \gexp{I_{m+1}}{n+1} \to \gexp{I_{m+1}}{n} \gtimes I_1$ followed by $\lstep{m,j} \from \gexp{I_{m+1}}{n} \gtimes I_1 \to \gexp{I_{m+1}}{j+1} \gtimes \gexp{I_m}{n-j-1}$ and we write $\rstepbar{m,j} \from \gexp{I_{m+1}}{n+1} \to \gexp{I_{m+1}}{j+1} \gtimes \gexp{I_m}{n-j-1}$ for the composition of $\gexp{\id[I_{m+1}]}{n} \gtimes r^n \from \gexp{I_{m+1}}{n+1} \to \gexp{I_{m+1}}{n} \gtimes I_1$ followed $\rstep{m,j} \from \gexp{I_{m+1}}{n} \gtimes I_1 \to \gexp{I_{m+1}}{j+1} \gtimes \gexp{I_m}{n-j-1}$, respectively.

\begin{proposition} \label{thm:lstep_face_inner}
    Let $m, n > 0$ and $j \in \{ 0, \dots, n \}$.
    For $i = 1, \dots, n$ such that $i \neq j+1$ and $\varepsilon = 0, 1$,
    \begin{enumerate}
        \item $\lstepbar{m,j}\face{}{i,\varepsilon} \from \gexp{I_{m+1}}{n} \to \gexp{I_{m+1}}{n}$ factors through $\lstepbar[n-1]{m,j} \from \gexp{I_{m+1}}{n} \to \gexp{I_{m+1}}{n-1}$;
        \item $\rstepbar{m,j}\face{}{i,\varepsilon} \from \gexp{I_{m+1}}{n} \to \gexp{I_{m+1}}{n}$ factors through $\rstepbar[n-1]{m,j} \from \gexp{I_{m+1}}{n} \to \gexp{I_{m+1}}{n-1}$
    \end{enumerate}
\end{proposition}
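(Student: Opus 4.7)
The plan is to verify both statements by direct computation on vertices. Since $\rstepbar{m,j}$ arises from $\lstepbar{m,j}$ by replacing every occurrence of $l$ with $r$, and the defining formulas for $\lstep{m,j}$ and $\rstep{m,j}$ are structurally identical, I would handle part (2) by the same argument as part (1), so I focus on part (1). The approach is to split into two subcases based on whether $i \leq j$ or $i \geq j+2$ -- these being the only two possibilities, since $i \in \{1,\dots,n\}$ and $i \neq j+1$. In each subcase I would unwind $\lstepbar{m,j}\face{}{i,\varepsilon}(v_1,\dots,v_n)$ step by step: the face inserts $\varepsilon(m+1)$ at position $i$, then $l^m$ acts on the final coordinate (sending $\varepsilon(m+1)$ to $\varepsilon$ and every other value to $0$), and finally $\lstep{m,j}$ branches on this final coordinate into one of its two cases.

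The key observation is that since $i \neq j+1$, the inserted face value lies strictly away from the branching coordinate $j+1$ at which $\lstep{m,j}$ switches behavior. Consequently, the case split of $\lstep{m,j}$ on the final coordinate passes identically through both $\lstepbar{m,j}\face{}{i,\varepsilon}$ and $\lstepbar[n-1]{m,j}$: in the first case ($v_n < m+1$) both produce outputs of the form $(v_1,\dots,v_j,lv_{j+1},\dots)$, and in the second case ($v_n = m+1$) both produce outputs of the form $(v_1,\dots,v_{j+1},lv_{j+2},\dots)$. In each branch, comparing the two formulas reveals that $\lstepbar{m,j}\face{}{i,\varepsilon}$ differs from $\lstepbar[n-1]{m,j}$ only by the insertion of one additional coordinate in the codomain, and this insertion is uniform across the two branches. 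Writing this additional coordinate insertion as an explicit graph map $\tau$ in the codomain exhibits the desired factorization $\lstepbar{m,j}\face{}{i,\varepsilon} = \tau \circ \lstepbar[n-1]{m,j}$.

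For $i \geq j+2$ I expect the factorization to be realized cleanly with $\tau$ being the face map inserting $\varepsilon m$ at position $i$ in the $\gexp{I_m}{n-j-1}$-factor of the codomain -- the value $\varepsilon m$ arising because $l$ sends $\varepsilon(m+1) \in I_{m+1}$ to $\varepsilon m \in I_m$. The case $i \leq j$ I expect to be the main obstacle and to require more delicate handling: here the inserted coordinate sits within the "unchanged prefix" of both branches, so the factoring map $\tau$ lives in the $\gexp{I_{m+1}}{j+1}$-factor, but one must be careful to track how $l$ interacts with the position $j+1$ across the two branches when aligning the formulas. Once both subcases are explicitly verified, part (2) follows by the direct symmetry between $l$ and $r$. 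The only real difficulty is index bookkeeping: the position $i$, the branching position $j+1$, the superscript decrement from $n$ to $n-1$, and the split of the codomain between $I_{m+1}$- and $I_m$-factors must all be reconciled simultaneously, but beyond this the verification is purely definitional.
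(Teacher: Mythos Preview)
Your proposal is correct and follows essentially the same route as the paper: both argue only the $\lstepbar{m,j}$ case (declaring $\rstepbar{m,j}$ analogous), split into the two subcases $i < j+1$ and $i > j+1$, and verify the factorization by a direct coordinate computation, exhibiting the factoring map explicitly in each subcase. Your identification of $\tau$ for $i \geq j+2$ as the insertion of $\varepsilon m$ at position $i$ is exactly the paper's map $\iota$. Your caution about the case $i \leq j$ is mild overcaution: in the paper this case is handled just as directly, with the factoring map simply being $\face{}{i,\varepsilon}$ inserting $\varepsilon(m+1)$ in the $I_{m+1}$-prefix, so no extra delicacy beyond the index bookkeeping you already anticipate is needed.
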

\begin{proof}
    We consider the result for $\lstepbar{m,j}$, as the result for $\rstepbar{m,j}$ is analogous.

    Fix $(v_1, \dots, v_n) \in \gexp{I_{m+1}}{n}$.
    If $i < j + 1$ then we have
    \begin{align*}
        \lstepbar{m,j}\face{}{i,\varepsilon}(v_1, \dots, v_n) &= \lstepbar{m,j}(v_1, \dots, v_{i-1}, \varepsilon (m+1), v_{i}, \dots, v_n) \\
        &= \lstep{m,j} (v_1, \dots, v_{i-1}, \varepsilon (m+1), v_i, \dots, v_{n-1}, l^m v_n) \\
        &= \begin{cases}
            (v_1, \dots, v_{i-1}, \varepsilon (m+1), v_i, \dots, v_{j}, l v_{j+1}, \dots, l v_{n-1}) & \text{if } l^m v_n = 0 \\
            (v_1, \dots, v_{i-1}, \varepsilon (m+1), v_i, \dots, v_{j+1}, l v_{j+2}, \dots, l v_{n-1}) & \text{if } l^m v_n = 1
        \end{cases} \\
        &= \face{n}{i,\varepsilon} \lstep{m,j} (v_1, \dots, v_{n-1}, l^m v_n) \\
        &= \face{n}{i,\varepsilon} \lstepbar{m,j} (v_1, \dots, v_n).
    \end{align*}
    Thus, $\lstepbar{m,j}\face{}{i,\varepsilon} = \face{}{i,\varepsilon}\lstepbar[n-1]{m,j}$.

    Otherwise, we have $i > j+1$.
    Consider the embedding $\iota \from \gexp{I_{m+1}}{n-1} \to \gexp{I_{m+1}}{n}$ given by
    \[ \iota(v_1, \dots, v_{n-1}) = (v_1, \dots, v_{i-1}, \varepsilon m, v_i, \dots, v_{n-1}). \]
    With this, we may write
    \begin{align*}
        \lstepbar{m,j}\face{n+1}{i,0}(v_1, \dots, v_n) &= \lstepbar{m,j}(v_1, \dots, v_{i-1}, \varepsilon (m+1), v_{i}, \dots, v_n) \\
        &= \lstep{m,j} (v_1, \dots, v_{i-1}, \varepsilon (m+1), v_i, \dots, v_{n-1}, l^m v_n) \\
        &= \begin{cases}
            (v_1, \dots, v_j, l v_{j+1}, \dots, l v_{i-1}, l (\varepsilon (m+1)), l v_i, \dots, l v_{n-1}) & \text{if } l^m v_n = 0 \\
            (v_1, \dots, v_{j+1}, l v_{j+2}, \dots, l v_{i-1}, l (\varepsilon (m+1)), l v_i, \dots, l v_{n-1}) & \text{if } l^m v_n = 1
        \end{cases} \\
        &= \begin{cases}
            (v_1, \dots, v_j, l v_{j+1}, \dots, l v_{i-1}, \varepsilon m, l v_i, \dots, l v_{n-1}) & \text{if } l^m v_n = 0 \\
            (v_1, \dots, v_{j+1}, l v_{j+2}, \dots, l v_{i-1}, \varepsilon m, l v_i, \dots, l v_{n-1}) & \text{if } l^m v_n = 1
        \end{cases} \\
        &= \iota \lstep{m,j} (v_1, \dots, v_{n-1}, l^m v_n) \\
        &= \iota \lstepbar{m,j} (v_1, \dots, v_n).
    \end{align*}
    Thus, $\lstepbar{m,j}\face{}{i,\varepsilon} = \iota \lstepbar[n-1]{m,j}$.
\end{proof}
\begin{proposition} \label{thm:lstep_face_outer}
    Let $m, n > 0$ and $j \in \{ 0, \dots, n \}$.
    Then,
    \begin{enumerate}
        \item $\lstepbar{m,j}\face{}{j+1,0}$ and $\lstepbar{m,j}\face{}{j+1,1}$ factor through $\gexp{\id[I_{m+1}]}{j} \gtimes \gexp{l}{n-j} \from \gexp{I_{m+1}}{n} \to \gexp{I_{m+1}}{j} \gtimes \gexp{I_m}{n-j}$;
        \item $\rstepbar{m,j}\face{}{j+1,0}$ and $\rstepbar{m,j}\face{}{j+1,1}$ factor through $\gexp{\id[I_{m+1}]}{j} \gtimes \gexp{r}{n-j} \from \gexp{I_{m+1}}{n} \to \gexp{I_{m+1}}{j} \gtimes \gexp{I_m}{n-j}$.
    \end{enumerate}
\end{proposition}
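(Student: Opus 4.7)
The plan is to prove both statements by a direct computation that unfolds the definitions and exhibits the factorization explicitly; the argument is symmetric in $l$ and $r$, so I would write it out for $\lstepbar{m,j}$ and note that the $\rstepbar{m,j}$ case follows by the analogous argument with $l$ replaced by $r$.

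First, I would fix $(v_1, \ldots, v_n) \in \gexp{I_{m+1}}{n}$ and chase it through the composite $\lstepbar{m,j}\face{}{j+1,\varepsilon}$. After $\face{}{j+1,\varepsilon}$ the vertex becomes $(v_1, \ldots, v_j, \varepsilon(m+1), v_{j+1}, \ldots, v_n)$, and after applying $\gexp{\id[I_{m+1}]}{n} \gtimes l^m$ it becomes $(v_1, \ldots, v_j, \varepsilon(m+1), v_{j+1}, \ldots, v_{n-1}, l^m v_n)$. Now $\lstep{m,j}$ branches on whether the final coordinate is $0$ or $1$, yielding
\[ \lstepbar{m,j}\face{}{j+1,\varepsilon}(v_1, \ldots, v_n) = \begin{cases} (v_1, \ldots, v_j, \varepsilon m, lv_{j+1}, \ldots, lv_{n-1}) & \text{if } l^m v_n = 0, \\ (v_1, \ldots, v_j, \varepsilon(m+1), lv_{j+1}, \ldots, lv_{n-1}) & \text{if } l^m v_n = 1. \end{cases} \]

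Next, I would observe the key point: $l^m v_n$ is determined by $lv_n \in I_m$, since $l^m v_n = 1$ iff $v_n = m+1$ iff $lv_n = m$. Thus every entry of the output above is a function of the tuple $(v_1, \ldots, v_j, lv_{j+1}, \ldots, lv_n)$, which is exactly the image of $(v_1, \ldots, v_n)$ under $\gexp{\id[I_{m+1}]}{j} \gtimes \gexp{l}{n-j}$. Concretely, I would define $g \from \gexp{I_{m+1}}{j} \gtimes \gexp{I_m}{n-j} \to \gexp{I_{m+1}}{j+1} \gtimes \gexp{I_m}{n-j-1}$ by
\[ g(w_1, \ldots, w_n) := \begin{cases} (w_1, \ldots, w_j, \varepsilon m, w_{j+1}, \ldots, w_{n-1}) & \text{if } w_n < m, \\ (w_1, \ldots, w_j, \varepsilon(m+1), w_{j+1}, \ldots, w_{n-1}) & \text{if } w_n = m, \end{cases} \]
verify that this is a graph map (since on connected vertices either $w_n$ stays on one side of the threshold or jumps from $m-1$ to $m$, in which case the $(j+1)$-th coordinate of the output changes from $\varepsilon m$ to $\varepsilon(m+1)$, and these are adjacent), and then check equality $\lstepbar{m,j}\face{}{j+1,\varepsilon} = g \circ (\gexp{\id[I_{m+1}]}{j} \gtimes \gexp{l}{n-j})$ directly from the case split above.

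The proof is essentially routine bookkeeping with no real obstacle; the only subtlety worth flagging is the identification of the dichotomy $l^m v_n \in \{0,1\}$ with the dichotomy $l v_n \in \{0, \ldots, m-1\}$ versus $l v_n = m$, which is what allows the output to be expressed in terms of $l v_n$ rather than $v_n$ itself and so yields factorization through $\gexp{\id[I_{m+1}]}{j} \gtimes \gexp{l}{n-j}$.
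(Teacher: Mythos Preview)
Your argument is correct and follows essentially the same direct computation as the paper. The only cosmetic difference is that the paper treats the cases $\varepsilon = 0$ and $\varepsilon = 1$ separately (observing that for $\varepsilon = 0$ the factoring map simplifies to the cubical structure map $\face{}{j+1,0}\degen{}{n}$), whereas you handle both cases uniformly with a single map $g$; the underlying computation and the key observation that $l^m v_n$ is determined by $l v_n$ are identical.
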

\begin{proof}
    We show the result for $\lstepbar{m,j}$ as the result for $\rstepbar{m,j}$ is analogous.

    Fix $(v_1, \dots, v_n) \in \gexp{I_{m+1}}{n}$.
    We compute
    \begin{align*}
        \lstepbar{m,j}\face{}{j+1,0}(v_1, \dots, v_n) &= \lstepbar{m,j} (v_1, \dots, v_{j}, 0, v_{j+1}, \dots, v_n) \\
        &= \lstep{m,j}(v_1, \dots, v_{j}, 0, v_{j+1}, \dots, l^m v_n) \\
        &= \begin{cases}
            (v_1, \dots, v_{j}, l 0, l v_{j+1}, \dots, l v_{n-1}) & \text{if } l^m v_n = 0 \\
            (v_1, \dots, v_j, 0, l v_{j+1}, \dots, l v_{n-1}) & \text{if } l^m v_n = 1
        \end{cases} \\
        &= (v_1, \dots, v_j, 0, l v_{j+1}, \dots, l v_{n-1}) \\
        &= \face{}{j+1,0}\degen{}{n}(\gexp{\id[I_{m+1}]}{j} \gtimes \gexp{l}{n-j})(v_1, \dots, v_n),
    \end{align*}
    thus $\lstepbar{m,j}\face{}{j+1,0} = \face{}{j+1,0}\degen{}{n}(\gexp{\id[I_{m+1}]}{j} \gtimes \gexp{l}{n-j})$.

    Consider the map $f \from \gexp{I_{m+1}}{j} \gtimes \gexp{I_m}{n-j} \to \gexp{I_{m+1}}{j+1} \gtimes \gexp{I_m}{n-j-1}$ defined by
    \[ f(a_1, \dots, a_j, b_1, \dots, b_{n-j}) = \begin{cases}
        (a_1, \dots, a_j, m, b_1, \dots, b_{n-j-1}) & \text{if } l^{m-1} b_{n-j} = 0 \\
        (a_1, \dots, a_j, m+1, b_1, \dots, b_{n-j-1}) & \text{if } l^{m-1} b_{n-j} = 1. 
    \end{cases} \]
    It is straightforward to verify this is a graph map.
    With this, we may write
    \begin{align*}
        \lstepbar{m,j} \face{}{j+1,1} (v_1, \dots, v_n) &= \lstepbar{m,j} (v_1, \dots, v_j, m+1, v_{j+1}, \dots, v_n) \\
        &= \lstep{m,j} (v_1, \dots, v_j, m+1, v_{j+1}, \dots, l^m v_n) \\
        &= \begin{cases}
            (v_1, \dots, v_j, l (m+1), l v_{j+1}, \dots, l v_{n-1}) & \text{if } l^m v_n = 0 \\
            (v_1, \dots, v_j, m+1, l v_{j+1}, \dots, l v_{n-1}) & \text{if } l^m v_n = 1
        \end{cases} \\
        &= \begin{cases}
            (v_1, \dots, v_j, m, l v_{j+1}, \dots, l v_{n-1}) & \text{if } l^m v_n = 0 \\
            (v_1, \dots, v_j, m+1, l v_{j+1}, \dots, l v_{n-1}) & \text{if } l^m v_n = 1
        \end{cases} \\
        &= f(v_1, \dots, v_j, l v_{j+1}, \dots, l v_{n}) \\
        &= f(\gexp{\id[I_{m+1}]}{j} \gtimes \gexp{l}{n-j})(v_1, \dots, v_n).
    \end{align*}
    Thus, $\lstepbar{m,j}\face{}{j+1,1} = f(\gexp{\id[I_{m+1}]}{j} \gtimes \gexp{l}{n-j})$.
\end{proof}
\begin{lemma} \label{thm:lanodyne_factor_lemma}
    Let $m, n > 0$, $j \in \{ 0, \dots, n-1 \}$, and $G$ be a graph.
    Consider a subobject $X$ of $\gnerve[m+1] G$ which contains
    \begin{itemize}
        \item all $n$-cubes of $\gnerve[m+1] G$ which factor through $\gexp{l}{n} \from \gexp{I_{m+1}}{n} \to \gexp{I_m}{n}$,
        \item (if $n > 1$) for any $x \from \gexp{I_{m+1}}{h} \gtimes \gexp{I_m}{k-h} \to G$ where $k < n$ and $h \leq k$, the $(k+1)$-cube $x\lstepbar[k]{m,h-1} \from \gexp{I_{m+1}}{k+1} \to G$,
        \item (if $j > 0$) for any $x \from \gexp{I_{m+1}}{j} \gtimes \gexp{I_m}{n-j} \to G$, the $(n+1)$-cube $x\lstepbar{m,j-1} \from \gexp{I_{m+1}}{n+1} \to G$ of $\gnerve[m+1] G$.
    \end{itemize}
    Then, for any $n$-cube of $\gnerve[m+1] G$ which factors through $\gexp{\id[I_{m+1}]}{j+1} \gtimes \gexp{l}{n-j-1} \from \gexp{I_{m+1}}{n} \to \gexp{I_{m+1}}{j+1} \gtimes \gexp{I_m}{n-j-1}$ as some $x \from \gexp{I_{m+1}}{j+1} \gtimes \gexp{I_m}{n-j-1} \to G$, the restriction of the $(n+1)$-cube $x\lstepbar{m,j} \from \gexp{I_{m+1}}{n+1} \to G$ to the open box $\restr{x\lstepbar{m,j}}{\obox{n+1}{n+1,1}} \from \obox{n+1}{n+1,1} \to \gnerve[m+1] G$ factors through the inclusion $X \ito \gnerve[m+1] G$.
\end{lemma}
\begin{proof}
    We first show that $X$ contains all $n$-cubes which factor through the map $\gexp{\id[I_{m+1}]}{j} \gtimes \gexp{l}{n-j} \from \gexp{I_{m+1}}{n} \to \gexp{I_{m+1}}{j} \gtimes \gexp{I_m}{n-j}$. 
    If $j = 0$ then this follows by assumption.
    Otherwise, consider such an $n$-cube, which we write as $y(\gexp{\id[I_{m+1}]}{j} \gtimes \gexp{l}{n-j}) \from \gexp{I_{m+1}}{n} \to G$ for some $y \from \gexp{I_{m+1}}{j} \gtimes \gexp{I_m}{n-j} \to G$.
    Recall $\lstepbar{m,j-1}\face{}{n+1,1} = \gexp{\id[I_{m+1}]}{j} \gtimes \gexp{l}{n-j}$.
    This gives that $y(\gexp{\id[I_{m+1}]}{j} \gtimes \gexp{l}{n-j}) = y\lstepbar{m,j-1}\face{}{n+1,1}$.
    By assumption, $X$ contains $y\lstepbar{m,j-1}$. 
    Thus, it contains all faces of $y\lstepbar{m,j-1}$, including $y$.

    To see that each face of $\restr{x\lstepbar{m,j}}{\obox{n+1}{n+1,1}}$ is contained in $X$, fix $(i, \varepsilon) \neq (n+1,1)$.
    For $i = n+1$ and $\varepsilon = 0$, this follows as $\lstepbar{m,j}\face{}{n+1,0} = \gexp{\id[I_{m+1}]}{j} \gtimes \gexp{l}{n-j}$.
    Otherwise, if $i \neq j+1$, this follows from \cref{thm:lstep_face_inner}.
    If $i = j+1$ then \cref{thm:lstep_face_outer} gives that $\lstepbar{m,j}$ factors through $\gexp{\id[I_{m+1}]}{j} \gtimes \gexp{l}{n-j}$.
\end{proof}
We have an analogous result for $\rstepbar{m,j}$ as well.
\begin{lemma} \label{thm:ranodyne_factor_lemma}
    Let $m, n > 0$, $j \in \{ 0, \dots, n-1 \}$, and $G$ be a graph.
    Consider a subobject $X$ of $\gnerve[m+1] G$ which contains
    \begin{itemize}
        \item all $n$-cubes of $\gnerve[m+1] G$ which factor through $\gexp{r}{n} \from \gexp{I_{m+1}}{n} \to \gexp{I_m}{n}$,
        \item (if $n > 1$) for any $x \from \gexp{I_{m+1}}{h} \gtimes \gexp{I_m}{k-h} \to G$ where $k < n$ and $h \leq k$, the $(k+1)$-cube $x\rstepbar{m,h-1} \from \gexp{I_{m+1}}{n+1} \to G$,
        \item (if $j > 0$) for any $x \from \gexp{I_{m+1}}{j} \gtimes \gexp{I_m}{n-j} \to G$, the $(n+1)$-cube $x\rstepbar{m,j-1} \from \gexp{I_{m+1}}{n+1} \to G$ of $\gnerve[m+1] G$.
    \end{itemize}
    Then, for any $n$-cube of $\gnerve[m+1] G$ which factors through $\gexp{\id[I_{m+1}]}{j+1} \gtimes \gexp{r}{n-j-1} \from \gexp{I_{m+1}}{n} \to \gexp{I_{m+1}}{j+1} \gtimes \gexp{I_m}{n-j-1}$ as some $x \from \gexp{I_{m+1}}{j+1} \gtimes \gexp{I_m}{n-j-1} \to G$, the restriction of the $(n+1)$-cube $x\rstepbar{m,j} \from \gexp{I_{m+1}}{n+1} \to G$ to the open box $\restr{x\rstepbar{m,j}}{\obox{n+1}{n+1,1}} \from \obox{n+1}{n+1,1} \to \gnerve[m+1] G$ factors through the inclusion $X \ito \gnerve[m+1] G$. \noproof
\end{lemma}
\begin{theorem} \label{thm:lr_anodyne}
    Let $m > 0$ and $G$ be a graph.
    The maps $l^*, r^* \from \gnerve[m]{G} \to \gnerve[m+1]{G}$ are anodyne.
\end{theorem}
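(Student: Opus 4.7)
The plan is to construct an explicit filtration of $\gnerve[m+1]G$ starting from the image of $l^*$ (the case of $r^*$ being entirely analogous, using \cref{thm:ranodyne_factor_lemma} in place of \cref{thm:lanodyne_factor_lemma}) and to exhibit each successive inclusion as a pushout of a coproduct of open box inclusions $\obox{n+1}{n+1,1} \ito \cube{n+1}$. Since the saturation of open box inclusions is closed under coproducts (\cref{thm:sat_coprod}), pushouts, and transfinite composition, this will show $l^*$ is anodyne.

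The filtration will be indexed lexicographically by pairs $(n, j)$ with $n \geq 1$ and $0 \leq j \leq n-1$. The starting subobject $X_0 \subseteq \gnerve[m+1]G$ will be the image of $l^*$, which consists precisely of those cubes factoring through a suitable power of $\gexp{l}{}$. At stage $(n, j)$, I will enlarge the previous subobject $X_{(n,j-1)}$ (or $X_{(n-1, n-2)}$ when $j = 0$) by pushing out along the coproduct, indexed over all cubes $x \from \gexp{I_{m+1}}{j+1} \gtimes \gexp{I_m}{n-j-1} \to G$, of the open box inclusion $\obox{n+1}{n+1,1} \ito \cube{n+1}$, where the map into $X_{(n,j-1)}$ is the restriction $\restr{x\lstepbar{m,j}}{\obox{n+1}{n+1,1}}$ and the map out of $\cube{n+1}$ is the $(n+1)$-cube $x\lstepbar{m,j}$ itself. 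The point of this construction is that the $(n+1,1)$-face of $x\lstepbar{m,j}$ is $x$ (regarded as an $n$-cube of $\gnerve[m+1]G$ via the factorization through $\gexp{\id[I_{m+1}]}{j+1} \gtimes \gexp{l}{n-j-1}$), so each pushout adds exactly the $n$-cubes that factor through $\gexp{\id[I_{m+1}]}{j+1} \gtimes \gexp{l}{n-j-1}$ but not through the previous stage.

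The main technical verification is that the open-box map $\restr{x\lstepbar{m,j}}{\obox{n+1}{n+1,1}}$ actually lands in the previous stage $X_{(n,j-1)}$, and this is precisely the content of \cref{thm:lanodyne_factor_lemma}: the hypotheses of that lemma (closure under cubes factoring through $\gexp{l}{k}$ for $k \leq n$, closure under $\lstepbar{m,h-1}$ for lower dimensions, and closure under $\lstepbar{m,j-1}$ for the current dimension) are built into the inductive definition of the filtration, so the lemma applies at each stage. I then need to check that the colimit of this transfinite composition exhausts $\gnerve[m+1]G$, which follows because the stage $(n, n-1)$ subobject contains every $n$-cube (as $\gexp{\id[I_{m+1}]}{n} \gtimes \gexp{l}{0} = \id[\gexp{I_{m+1}}{n}]$), and colimits of presheaves are computed levelwise.

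The main conceptual obstacle is simply the bookkeeping of the filtration and making sure the inductive hypotheses required by \cref{thm:lanodyne_factor_lemma} are preserved at each stage — in particular, that the cubes $x\lstepbar{m,h-1}$ from lower dimensions have indeed been added at earlier stages. This is managed by the lexicographic ordering: all $(k+1)$-cubes needed for dimension $n$ appear in stages with $k < n$, and within dimension $n$ the substages are ordered so that $\lstepbar{m,j-1}$-cubes needed at substage $(n,j)$ were added at substage $(n, j-1)$. Once this indexing is pinned down, the conclusion follows immediately from \cref{thm:anodyne_lift} and closure of saturated classes under the relevant colimit operations.
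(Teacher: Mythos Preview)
Your proposal is correct and follows essentially the same route as the paper: the same lexicographic filtration by subobjects $X_{n,j}$, the same attaching cubes $x\lstepbar{m,j}$ with missing face $(n+1,1)$, and the same appeal to \cref{thm:lanodyne_factor_lemma} to verify the open-box restriction lands in the previous stage. The only point to tighten when you write it out is that the coproduct at stage $(n,j)$ must be indexed over those $x$ whose associated cube is \emph{not already} in the previous subobject (as the paper does with its sets $S_{n,j+1}$); otherwise the pushout fails to embed in $\gnerve[m+1]G$ and the transfinite composite is not literally $l^*$---but your remark that ``each pushout adds exactly the $n$-cubes \ldots\ not through the previous stage'' shows you already have this in mind.
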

\begin{proof}
    We show that $l^*$ is anodyne as the result for $r^*$ is analogous.
    Let $X_0$ denote the image of $\gnerve[m]{G}$ in $\gnerve[m+1]{G}$ under the embedding $l^* \from \gnerve[m]{G} \ito \gnerve[m+1]{G}$.
    We show $\gnerve[m+1]{G}$ can be obtained from $X_0$ by a transfinite composition of pushouts along coproducts of open box inclusion.
    For $n > 0$ and $j \in \{ 1, \dots, n \}$, let $X_{n,j}$ be the subobject of $\gnerve[m+1]{G}$ generated by:
    \begin{itemize}
        \item $X_0$,
        \item (if $n > 1$) for any $x \from \gexp{I_{m+1}}{l} \gtimes \gexp{I_m}{k-l} \to G$ where $k < n$ and $l \leq k$, the $(k+1)$-cube $x\lstepbar[k]{m,l-1} \from \gexp{I_{m+1}}{n+1} \to G$ of $\gnerve[m+1] G$,
        \item for any $x \from \gexp{I_{m+1}}{i} \gtimes \gexp{I_m}{n-i} \to G$ where $i \leq j$, the $(n+1)$-cube $x\lstepbar{m,i-1} \from \gexp{I_{m+1}}{n+1} \to G$ of $\gnerve[m+1] G$.
    \end{itemize}
    By construction, there is a sequence of embeddings
    \[ X_0 \ito X_{1,1} \ito X_{2,1} \ito X_{2,2} \ito X_{3,1} \ito \dots \]
    Note that the subobject $X_{n,n}$ contains all $n$-cubes of $\gnerve[m+1] G$.
    This is because any $n$-cube $x \from \gexp{I_{m+1}}{n} \to G$ is the $\face{}{n+1,1}$-face of $x\lstepbar{m,n-1} \from \gexp{I_{m+1}}{n+1} \to G$.
    By construction, $X_{n,n}$ contains $x\lstepbar{m,n-1}$. Thus, it contains all faces of $x\lstepbar{m,n-1}$, including $x$.
    With this, we have 
    \[ \gnerve[m+1]{G} \cong \colim(X_0 \ito X_{1,1} \ito X_{2,1} \ito X_{2,2} \ito \dots). \]
    It remains to show $X_{n,j+1}$ is a pushout of $X_{n,j}$ along a coproduct of open box inclusions and $X_{n+1,1}$ is a pushout of $X_{n,n}$ along a coproduct of open box inclusions.

    Fix $n > 0$ and $j \in \{ 1, \dots, n-1 \}$.
    Let $S_{n,j+1}$ be the set of $n$-cubes $\gexp{I_{m+1}}{n} \to G$ which factor through the map $\gexp{\id[I_{m+1}]}{j+1} \gtimes \gexp{l}{n-j-1} \from \gexp{I_{m+1}}{n} \to \gexp{I_{m+1}}{j+1} \gtimes \gexp{I_m}{n-j-1}$ and are not contained in $X_{n,j}$.
    We write an element of $S_{n,j+1}$ as $x(\gexp{\id[I_{m+1}]}{j+1} \gtimes \gexp{l}{n-j-1})$ for some $x \from \gexp{I_{m+1}}{j+1} \gtimes \gexp{I_m}{n-j-1} \to G$.
    By construction, $X_{n,j+1}$ contains all $n$-cubes of $S_{n,j+1}$ as such an $n$-cube $x(\gexp{\id[I_{m+1}]}{j+1} \gtimes \gexp{l}{n-j-1})$ is the $\face{}{n+1,1}$-face of $x\lstepbar{m,j}$ (which is contained in $X_{n,j+1}$ by construction).
    This gives a map 
    \[ \coprod\limits_{x(\gexp{\id[I_{m+1}]}{j+1} \gtimes \gexp{l}{n-j-1}) \in S_{n,j+1}} \cube{n+1} \xrightarrow{x\lstepbar{m,j}} X_{n,j+1}. \]
    For each $x(\gexp{\id[I_{m+1}]}{j+1} \gtimes \gexp{l}{n-j-1}) \in S_{n,j+1}$, the restriction of the map $x\lstepbar{m,j}$ to the open box $\obox{n+1}{n+1,1}$ factors through $X_{n,j}$ by \cref{thm:lanodyne_factor_lemma}.
    \[ \coprod\limits_{x(\gexp{\id[I_{m+1}]}{j+1} \gtimes \gexp{l}{n-j-1}) \in S_{n,j+1}} \obox{n+1}{n+1,1} \xrightarrow{\restr{x\lstepbar{m,j}}{\obox{}{}}} X_{n,j}. \]
    As $x(\gexp{\id[I_{m+1}]}{j+1} \gtimes \gexp{l}{n-j-1})$ is not in $X_{n,j}$, the $n+1$-cube $x\lstepbar{m,j}$ is also not in $X_{n,j}$ (as one of its faces is $x(\gexp{\id[I_{m+1}]}{j+1} \gtimes \gexp{l}{n-j-1})$). 
    The generating cubes of $X_{n,j+1}$ which are not contained in $X_{n,j}$ are exactly those of the form $x\lstepbar{m,j}$ for some $x \in S_{n,j+1}$.
    Thus, we may write $X_{n,j+1}$ as the following pushout.
    \[ \begin{tikzcd}[column sep = large]
        \coprod\limits_{x(\gexp{\id[I_{m+1}]}{j+1} \gtimes \gexp{l}{n-j-1}) \in S_{n,j+1}} \obox{n+1}{n+1,1} \ar[r, "\restr{x\lstepbar{m,j}}{\obox{}{}}"] \ar[d, hook] \ar[rd, phantom, "\ulcorner" very near end] & X_{n,j} \ar[d, hook] \\
        \coprod\limits_{x(\gexp{\id[I_{m+1}]}{j+1} \gtimes \gexp{l}{n-j-1}) \in S_{n,j+1}} \cube{n+1} \ar[r, "x\lstepbar{m,j}", swap] & X_{n,j+1}
    \end{tikzcd} \]
    Thus, the map $X_{n,j} \to X_{n,j+1}$ is a pushout along a coproduct of open box inclusions.

    We now show the map $X_{n,n} \to X_{n+1,1}$ is a pushout along a coproduct of open box inclusions.
    Let $S_{n+1,1}$ be the set of $(n+1)$-cubes which factor through the map $\id[I_{m+1}] \gtimes \gexp{l}{n} \from \gexp{I_{m+1}}{n+1} \to I_{m+1} \gtimes \gexp{I_m}{n}$ and are not contained in $X_{n,n}$.
    Similar to before, the generating cubes of $X_{n+1,1}$ which are not contained in $X_{n,n}$ are exactly those of the form $x\lstepbar[n+1]{m,0}$ for some $x \in S_{n+1,1}$.
    Thus \cref{thm:lanodyne_factor_lemma} similarly shows that $X_{n+1,1}$ may be written as the following pushout.
    \[ \begin{tikzcd}[column sep = large]
        \coprod\limits_{x(\id[I_{m+1}] \gtimes \gexp{l}{n}) \in S_{n+1,1}} \obox{n+1}{n+1,0} \ar[r, "{\restr{x\lstepbar[n+1]{m,0}}{\obox{}{}}}"] \ar[d, hook] \ar[rd, phantom, "\ulcorner" very near end] & X_{n,n} \ar[d, hook] \\
        \coprod\limits_{x(\id[I_{m+1}] \gtimes \gexp{l}{n}) \in S_{n+1,1}} \cube{n+1} \ar[r, "{x\lstepbar[n+1]{m,0}}", swap] & X_{n+1,1}
    \end{tikzcd} \]
    Thus, the map $X_{n,n} \to X_{n+1,1}$ is a pushout along a coproduct of open box inclusions.
\end{proof}
From this, we conclude that the inclusion of the 1-nerve of a graph into its nerve is anodyne.
\begin{corollary} \label{thm:1nerve_nerve_we}
    The natural map $\gnerve[1]{G} \to \gnerve{G}$ is anodyne.
\end{corollary}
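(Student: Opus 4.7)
The plan is to combine the colimit description of $\gnerve G$ with the anodyne property of the structure maps, leveraging that the class of anodyne maps is saturated.

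By \cref{thm:nerve_colim}, we have a natural isomorphism
\[ \gnerve G \cong \colim\bigl(\gnerve[1] G \xrightarrow{l^*} \gnerve[2] G \xrightarrow{r^*} \gnerve[3] G \xrightarrow{l^*} \gnerve[4] G \xrightarrow{r^*} \cdots\bigr), \]
and the natural map $\gnerve[1] G \to \gnerve G$ is precisely the component of the colimit cone at the first stage. Thus, by construction, this map is the transfinite composition (indexed by $\omega$) of the sequence of maps $l^*$ and $r^*$.

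By \cref{thm:lr_anodyne}, each of the maps $l^*, r^* \from \gnerve[m] G \to \gnerve[m+1] G$ is anodyne. Since anodyne maps form a saturated class (being defined as the saturation of open box inclusions), they are in particular closed under transfinite composition. Applying this closure property to the $\omega$-indexed sequence above, the component $\gnerve[1] G \to \gnerve G$ of the colimit cone lies in the saturated class of anodyne maps.

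There is no real obstacle here: the statement is a direct consequence of \cref{thm:nerve_colim} and \cref{thm:lr_anodyne} together with the definition of a saturated class. The only thing to check is that the colimit cone map is indeed built from the given generating maps by transfinite composition, which is immediate from the explicit presentation in \cref{thm:nerve_colim}.
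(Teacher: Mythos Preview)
Your proof is correct and follows essentially the same approach as the paper: both use \cref{thm:nerve_colim} to exhibit $\gnerve[1] G \to \gnerve G$ as a transfinite composition and \cref{thm:lr_anodyne} to see the steps are anodyne. The only cosmetic difference is that the paper phrases the composition in terms of the composites $c^* = r^* l^*$ rather than the individual $l^*$ and $r^*$, but this is immaterial.
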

\begin{proof}
    By \cref{thm:nerve_colim}, $\gnerve{G}$ is a transfinite composition of the maps $c^* \from \gnerve[m] G \to \gnerve[m+2] G$ for $m > 0$.
    By \cref{thm:lr_anodyne}, each $c^*$ is anodyne.
    Thus, each component of the colimit cone $\gnerve[m] \to \gnerve G$ is anodyne.
\end{proof}
This gives us our main theorem.
\begin{proof}[Proof of \cref{thm:main}]
    The first result is proven in \cref{thm:nerve_kan}.
    The second result is proven in \cref{thm:1nerve_nerve_we}.
\end{proof}

\section{Consequences} \label{sec:consequences}

\subsection*{Proof of the conjecture of Babson, Barcelo, de Longueville, Laubenbacher}
Using our main result, we obtain a proof of \cite[Thm.~5.2]{babson-barcelo-longueville-laubenbacher} which does not rely on the cubical approximation hypothesis \cite[Prop.~5.1]{babson-barcelo-longueville-laubenbacher}.

\begin{theorem}\label{thm:conj-bbdll}
    There is a natural group isomorphism $A_n(G, v) \cong \pi_n(\reali{\gnerve[1]{G}}, v)$. 
\end{theorem}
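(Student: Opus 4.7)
The plan is to chain together the main theorem with the comparison results for cubical homotopy groups already established in the paper. Concretely, I would assemble the isomorphism as a composite
\[ A_n(G, v) \xrightarrow{\cong} \pi_n(\gnerve G, v) \xrightarrow{\cong} \pi_n(\reali{\gnerve G}, v) \xleftarrow{\cong} \pi_n(\reali{\gnerve[1] G}, v), \]
where each arrow is supplied by a cited result.

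First, I would invoke \cref{thm:a_eq_cset_pi} to get a natural group isomorphism $A_n(G, v) \iso \pi_n(\gnerve G, v)$. This step is legitimate because \cref{thm:nerve_kan} (part (1) of \cref{thm:main}) guarantees that $\gnerve G$ is a Kan complex, so the homotopy groups on the right are defined. Next, I would apply \cref{thm:cset_pi_eq_pi} to the Kan complex $\gnerve G$ with the basepoint $v$ to obtain a natural isomorphism $\pi_n(\gnerve G, v) \iso \pi_n(\reali{\gnerve G}, v)$.

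For the last arrow, I would use part (2) of \cref{thm:main}, which says that the natural map $\gnerve[1] G \ito \gnerve G$ is anodyne. By \cref{thm:cset_we_to_top}, anodyne maps induce isomorphisms on all homotopy groups of the geometric realization at every basepoint. In particular, for the basepoint $v \in \reali{\gnerve[1] G}$ (which is sent to $v \in \reali{\gnerve G}$), we obtain an isomorphism $\pi_n(\reali{\gnerve[1] G}, v) \iso \pi_n(\reali{\gnerve G}, v)$. Composing these three isomorphisms yields the desired natural isomorphism.

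There is no genuine obstacle: the hard work has been done in establishing \cref{thm:main} and in the companion paper's identification of cubical and topological homotopy groups. The only things to verify are the naturality in $(G, v)$ (each cited isomorphism is already natural, so the composite is too) and that basepoints are tracked correctly under the realization functor (the $0$-cube $v$ of $\gnerve[1] G$ is sent to the point $v$ of $\reali{\gnerve[1] G}$, and likewise through the inclusion into $\gnerve G$). Both are immediate from the constructions.
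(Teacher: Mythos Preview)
Your proposal is correct and matches the paper's own proof essentially line for line: the paper also composes the isomorphisms from \cref{thm:a_eq_cset_pi}, \cref{thm:cset_pi_eq_pi}, and then \cref{thm:main} together with \cref{thm:cset_we_to_top}. Your added remarks on naturality and basepoint tracking are appropriate and do not alter the argument.
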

\begin{proof}
    We have
    \[ \begin{array}[b]{r@{ \ }l l}
        A_n(G, v) & \cong \pi_n(\gnerve{G}, v) & \text{ by \cref{thm:a_eq_cset_pi}} \\
        & \cong \pi_n(\reali{\gnerve{G}}, v) & \text{ by \cref{thm:cset_pi_eq_pi}} \\
        & \cong \pi_n(\reali{\gnerve[1]{G}}, v) & \text{ by \cref{thm:main,thm:cset_we_to_top}}.
    \end{array} \qedhere \]
\end{proof}

\subsection*{Discrete homology of graphs}

In this subsection, we prove the discrete Hurewicz theorem, relating discrete homotopy and homology groups.
To do so, we begin with a quick review of cubical homology (cf.~\cite{massey:singular-homology-theory,brown-higgins-sivera:nonabelian-algebraic-topology,barcelo-greene-jarrah-welker:connections}).

First, we recall the standard definition of homology (with integral coefficients) of a chain complex.
A (bounded) \emph{chain complex} (over $\Z$) consists of a collection $\{ C_n \mid n \geq 0 \}$ of abelian groups and, for $n \geq 1$, a group homomorphism $\cmap \from C_n \to C_{n-1}$ such that $\cmap[n-1] \cmap[n] = 0$.
A map $f \from C \to D$ of chain complexes consists of maps $f_n \colon C_n \to D_n$ for $n \geq 0$ which make the respective squares commute.
We write $\Ch$ for the category of chain complexes.
Define a functor $H_* \from \Ch \to \fcat{\nat}{\Ab}$ by taking a chain complex $C$ to a sequence of homology groups given by:
\[ H_n C := \begin{cases}
    C_0 / \image \cmap[0] & n = 0 \\
    \ker \cmap[n] / \image \cmap[n+1] & n > 0.
\end{cases} \]
We refer to $H_nC$ as the \emph{$n$-th homology} of $C$ with integer coefficients.
One verifies that a map of chain complexes $C \to D$ induces maps $H_n C \to H_n D$ between their $n$-th homologies.

Next, we explain how to construct a chain complex out of a cubical set via a construction analogous to the normalized complex in simplicial singular homology.
We construct a functor $N \from \cSet_* \to \Ch$ by
\[ (NX)_n := F_*X_n / DX_n \]
where $F_* X_n$ is the free abelian group on the pointed set $X_n$ and $D X_n$ is the subgroup of $F_*X_n$ generated by degenerate cubes, i.e.~those that lie in the image of a degeneracy or a connection $X_{n-1} \to X_n$.
The chain differentials $\partial \from (NX)_n \to (NX)_{n-1}$ are given by
\[ \partial(x) := \sum\limits_{\substack{i = 1, \dots, n \\ \varepsilon = 0, 1}} (-1)^{i+\varepsilon} x \face{}{i,\varepsilon}. \]
\begin{definition} 
    The \emph{reduced cubical homology with integer coefficients} functor is the functor $\tilde{H}_* \from \cSet_* \to \fcat{\nat}{\Ab}$ given by the composite
    \[ \cSet_* \xrightarrow{N} \Ch \xrightarrow{H_*} \fcat{\nat}{\Ab}. \]
\end{definition}
We contrast the definition of \emph{reduced} homology with that of \emph{unreduced homology}, which is defined on the category of non-pointed cubical sets by using non-pointed free abelian groups.

Unlike in the construction of simplicial homology, we are forced to take the quotient of $F_* X_\bullet$ by the subcomplex of cubes in the image of a degeneracy map.
We do, however, have flexibility regarding whether or not to quotient by the subcomplex of cubes in the image of a connection map, as the two complexes are quasi-isomorphic \cite[Cor.~3.10]{barcelo-greene-jarrah-welker:connections}.

\begin{definition}[cf.~{\cite[\S2]{barcelo-capraro-white}}] \label{def:disc-homol}
    The \emph{reduced discrete homology} functor $\tilde{DH}_* \from \Graph_* \to \fcat{\nat}{\Ab}$ is the composite of functors
    \[ \Graph_* \xrightarrow{\gnerve[1]{}} \cSet_* \xrightarrow{\tilde{H}_*} \fcat{\nat}{\Ab}. \]
\end{definition}

By the homotopy invariance of cubical homology \cite[Thm.~3.11]{carranza-kapulkin-tonks:hurewicz-cubical}, we have the following fact.

\begin{proposition} \label{thm:homology_we}
    Let $f \from (X,x) \to (Y,y)$ be a pointed cubical map.
    If $f$ is a weak equivalence then $H_*f \from H_*(X,x) \to H_*(Y,y)$ is an isomorphism of graded abelian groups. \noproof
\end{proposition}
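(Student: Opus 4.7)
The plan is to reduce the statement to the corresponding fact for singular homology of topological spaces. The crucial ingredient, a cubical analogue of the classical comparison available for simplicial sets, is a natural quasi-isomorphism between the normalized cubical chain complex $NX$ and the (reduced) singular chain complex of the geometric realization $\reali{X}$.

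Granting this comparison, the proposition follows quickly. If $f \from (X,x) \to (Y,y)$ is a weak equivalence, then by definition $\reali{f}$ is a weak homotopy equivalence of topological spaces, so it induces an isomorphism on (reduced) singular homology in every degree (for instance by applying the relative Hurewicz theorem to the mapping cone of $\reali{f}$, or by CW-approximation combined with the Whitehead theorem). Naturality of the comparison quasi-isomorphism then forces $\tilde{H}_* f$ to be an isomorphism of graded abelian groups.

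I expect the comparison step to be the main technical obstacle. It can be established by an acyclic models argument: both functors $\cSet_* \to \Ch$ preserve pushouts along monomorphisms compatibly with the skeletal filtration of a cubical set, and both send each representable $\cube{n}$ to a chain complex quasi-isomorphic to $\Z$ concentrated in degree zero, since $\reali{\cube{n}} = [0,1]^n$ is contractible. Induction up the skeletal filtration, together with the five lemma, then yields the quasi-isomorphism. The subtle point here is that $DX_n$ is defined to include the images of connections as well as degeneracies, which is exactly what is needed for the normalization to behave correctly on representables and to produce the expected reduced homology; without connections in $D$, degenerate-but-not-degenerate cubes coming from connection operators would obstruct the comparison.
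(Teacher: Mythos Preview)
Your argument is correct. In fact, the paper does not prove this proposition at all: it is stated with a \verb|\noproof| symbol and introduced as a known consequence of ``the homotopy invariance of cubical homology.'' So there is no paper proof to compare against.

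That said, your route is exactly the standard one, and the paper implicitly relies on it elsewhere: in the discussion of the Hurewicz map it cites \cite[Thm.~3.17]{barcelo-greene-jarrah-welker} for the agreement between the cubical homology of $X$ and the singular homology of $\reali{X}$, which is precisely the comparison quasi-isomorphism you sketch. Your acyclic-models outline (skeletal induction, contractibility of $\reali{\cube{n}}$, five lemma) is the right shape for that result, and your remark about needing connections in the definition of $DX_n$ so that $N\cube{n}$ has the correct homology is on point.
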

From this, we deduce that the discrete homology of a graph is the same as the cubical homology of its nerve.
\begin{corollary} \label{thm:graph_homology_iso}
    For a pointed graph $(G, v)$, the natural map $\gnerve[1]{G} \to \gnerve{G}$ induces an isomorphism 
    \[ \tilde{H}_*(\gnerve[1]{G}, v) \cong \tilde{H}_*(\gnerve{G}, v). \] 
    of graded abelian groups.
\end{corollary}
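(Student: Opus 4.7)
The plan is to combine the main theorem with the homotopy invariance of cubical homology. By \cref{thm:main}, the natural map $\gnerve[1]{G} \to \gnerve{G}$ is anodyne, and anodyne maps are in particular weak equivalences (by the characterization of anodyne maps stated just after \cref{thm:anodyne_lift}). Since both $\gnerve[1]{G}$ and $\gnerve{G}$ are pointed at $v$ and the map sends basepoint to basepoint, we obtain a pointed weak equivalence $(\gnerve[1] G, v) \to (\gnerve G, v)$.

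Next, applying \cref{thm:homology_we} to this pointed weak equivalence immediately yields that the induced map on reduced cubical homology is an isomorphism of graded abelian groups. By \cref{def:disc-homol}, the left-hand side $\tilde{H}_*(\gnerve[1] G, v)$ is by definition $\tilde{DH}_*(G, v)$, so this gives the desired isomorphism $\tilde{H}_*(\gnerve[1]{G}, v) \cong \tilde{H}_*(\gnerve{G}, v)$.

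There is essentially no obstacle here: the entire content has already been established. The only routine verification is that anodyne implies weak equivalence, which is recorded in the paragraph following \cref{thm:anodyne_lift}, and the naturality of the isomorphism follows from the functoriality of $\tilde{H}_*$ applied to the natural transformation $\gnerve[1] \to \gnerve$. The proof will therefore be a two-line invocation of \cref{thm:main} and \cref{thm:homology_we}.
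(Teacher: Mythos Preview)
Your proposal is correct and matches the paper's proof exactly: the paper's argument is the one-line ``Follows from \cref{thm:homology_we} and \cref{thm:main},'' which is precisely the combination you describe. The extra remarks about anodyne maps being weak equivalences and about naturality are fine elaborations but not additional content.
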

\begin{proof}
    Follows from \cref{thm:homology_we} and \cref{thm:main}.
\end{proof}

For any pointed Kan complex $(X, x)$, using the unit of the adjunction 
\[ \adjunct{F_*}{U_*}{\cSet_*}{\Ab^{\square^\op}} \]
between pointed cubical sets and cubical abelian groups, we may construct a natural map $\pi_n(X, x) \to \tilde{H}_*(X, x)$, since $\pi_n(U_*F_* (X, x)) \cong \tilde{H}_n(X, x)$ by \cite[Thm.~4.11]{carranza-kapulkin-tonks:hurewicz-cubical}.
This is the \emph{Hurewicz homomorphism}, cf.~\cite[Def.~4.14]{carranza-kapulkin-tonks:hurewicz-cubical}.

We then have the classical theorem of Hurewicz, phrased in the language of cubical sets:

\begin{theorem}[{\cite[Thm.~4.16]{carranza-kapulkin-tonks:hurewicz-cubical}}] \label{thm:cset_hurewicz}
    Let $n \geq 2$ and $(X,x)$ be a pointed connected Kan complex.
    Suppose $\pi_i (X,x) = 0$ for all $i \in \{ 1, \dots, n-1 \}$, i.e., $X$ is $n$-connected.
    Then the Hurewicz homomorphism $\pi_n (X,x) \to \tilde{H}_n (X,x)$ is an isomorphism. \noproof
\end{theorem}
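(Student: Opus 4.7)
The plan is to reduce this statement to the classical Hurewicz theorem for topological spaces by transporting everything across the geometric realization functor, as hinted at in the paragraph immediately preceding the statement.

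First I would set up the naturality square
\[ \begin{tikzcd}
    \pi_n(X, x) \ar[r, "h_X"] \ar[d, "\cong"'] & \tilde{H}_n(X, x) \ar[d, "\cong"] \\
    \pi_n(\reali{X}, x) \ar[r, "h_{\reali{X}}"'] & \tilde{H}_n(\reali{X}, x)
\end{tikzcd} \]
where $h_X$ denotes the Hurewicz homomorphism for cubical sets and $h_{\reali{X}}$ the topological Hurewicz homomorphism. The left vertical isomorphism is \cref{thm:cset_pi_eq_pi}, and the right vertical isomorphism is provided by \cite[Thm.~3.17]{barcelo-greene-jarrah-welker}. Commutativity of this square is a standard naturality statement, or indeed can be taken as the definition of $h_X$ following the alternative construction suggested before the theorem.

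Next I would transport the hypothesis across the left vertical map: since $\pi_i(X, x) \iso \pi_i(\reali{X}, x)$ for all $i \geq 1$, the assumption that $\pi_i(X,x) = 0$ for $i \in \{1, \dots, n-1\}$ gives that $\reali{X}$ is $(n-1)$-connected (connectedness of $\reali{X}$ follows either from connectedness of $X$ or from the case $n = 2$ hypothesis together with path-connectedness arguments). The classical Hurewicz theorem then asserts that the bottom map $h_{\reali{X}}$ is an isomorphism, and commutativity of the square forces $h_X$ to be an isomorphism as well.

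The only point requiring genuine care is commutativity of the square, that is, the compatibility of the cubical Hurewicz map (defined via the adjunction $F_* \dashv U_*$) with its topological counterpart under geometric realization. The cleanest route, as suggested in the paper, is to sidestep this entirely by \emph{defining} the cubical Hurewicz map to be the composite
\[ \pi_n(X,x) \xrightarrow{\cong} \pi_n(\reali{X},x) \xrightarrow{h_{\reali{X}}} \tilde{H}_n(\reali{X},x) \xrightarrow{\cong} \tilde{H}_n(X,x), \]
in which case the theorem is immediate from classical Hurewicz. Otherwise one must chase through the construction of $h_X$ via $F_* \dashv U_*$ and compare it with the simplicial/singular Hurewicz map on $\reali{X}$; this is routine but tedious, and is the only real obstacle in the proof.
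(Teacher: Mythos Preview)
Your proposal is correct and matches the paper's approach: the theorem is stated with \verb|\noproof| and the preceding paragraph explicitly indicates that it follows from the classical Hurewicz theorem via the isomorphisms $\pi_n(X,x) \cong \pi_n(\reali{X},x)$ (\cref{thm:cset_pi_eq_pi}) and $\tilde{H}_n(X,x) \cong \tilde{H}_n(\reali{X},x)$ (\cite[Thm.~3.17]{barcelo-greene-jarrah-welker}). You have simply spelled out the reduction the paper leaves implicit, including the observation that one may take the composite through $\reali{X}$ as the definition of the cubical Hurewicz map to avoid the compatibility check.
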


\begin{definition}
For any pointed connected graph $(G,v)$ and $n \geq 2$, we therefore obtain the \emph{discrete Hurewicz homomorphism} $A_n(G, v) \to \tilde{DH}_n (G, v)$ as the composite
    \[ \begin{array}{r l l}
        A_n(G, v) & \cong \pi_n(\gnerve{G}, v) & \text{ by \cref{thm:a_eq_cset_pi}} \\
        & \to \tilde{H}_n (\gnerve{G}) & \text{the Hurewicz homomorphism} \\
        & \cong \tilde{H}_n (\gnerve[1]{G}) & \text{ by \cref{thm:graph_homology_iso}} \\
        & = \tilde{DH}_n (G, v) & \text{ by \cref{def:disc-homol}.}
    \end{array} \]
\end{definition}

One may verify that this map recovers the homomorphism defined in \cite[\S5.2]{lutz}.
We then have the expected discrete analogue of the Hurewicz theorem.

\begin{theorem}[Discrete Hurewicz Theorem] \label{graph-hurewicz}
    Let $n \geq 2$ and $(G,v)$ be a pointed connected graph.
    Suppose $A_i (G,v) = 0$ for all $i \in \{ 1, \dots, n-1 \}$.
    Then the induced Hurewicz map $A_n (G,v) \to \tilde{DH}_n (G, v)$ is an isomorphism.
\end{theorem}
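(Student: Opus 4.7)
The plan is to reduce the discrete Hurewicz theorem for graphs to the cubical Hurewicz theorem (\cref{thm:cset_hurewicz}) applied to the Kan complex $\gnerve G$, using the main theorem and the fact that every step of the composite defining the discrete Hurewicz homomorphism is an isomorphism under the given hypotheses.

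First I would translate the hypothesis into a statement about the nerve. By \cref{thm:a_eq_cset_pi}, the natural isomorphism $A_i(G,v) \cong \pi_i(\gnerve G, v)$ turns the assumption $A_i(G,v) = 0$ for $i = 1, \dots, n-1$ into the statement that $\pi_i(\gnerve G, v) = 0$ in the same range. Since $G$ is assumed connected, the same isomorphism applied at $i = 0$ gives that $\gnerve G$ is connected as a cubical set. By \cref{thm:main}, $\gnerve G$ is a Kan complex, so it is a pointed, connected, $(n-1)$-connected Kan complex in the sense required by \cref{thm:cset_hurewicz}.

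Next I would apply the cubical Hurewicz theorem: under these hypotheses the cubical Hurewicz homomorphism
\[ \pi_n(\gnerve G, v) \to \tilde H_n(\gnerve G, v) \]
is an isomorphism. To conclude I would unwind the definition of the discrete Hurewicz homomorphism: it is the composite
\[ A_n(G,v) \xrightarrow{\cong} \pi_n(\gnerve G, v) \to \tilde H_n(\gnerve G, v) \xrightarrow{\cong} \tilde H_n(\gnerve[1] G, v) = \tilde{DH}_n(G, v), \]
where the first arrow is \cref{thm:a_eq_cset_pi}, the middle arrow is the cubical Hurewicz map (an isomorphism by the previous step), and the last arrow is induced by the anodyne inclusion $\gnerve[1] G \ito \gnerve G$ of \cref{thm:main}, which is an isomorphism on reduced cubical homology by \cref{thm:graph_homology_iso}. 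A composite of isomorphisms is an isomorphism, yielding the result.

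The only step that requires any thought is checking that the composite above is indeed the discrete Hurewicz homomorphism as defined in the text rather than merely \emph{some} isomorphism of the correct source and target; but this is essentially the definition spelled out just before the statement, so the proof consists of invoking each of the three cited results in turn. There is no genuine obstacle: all the analytic content sits in the cubical Hurewicz theorem and in \cref{thm:main}, both of which are already in hand.
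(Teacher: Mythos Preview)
Your proposal is correct and follows exactly the paper's own proof, which simply says the result is an immediate consequence of the definition of the discrete Hurewicz homomorphism together with \cref{thm:cset_hurewicz}. You have merely unpacked that sentence: translate the vanishing hypotheses via \cref{thm:a_eq_cset_pi}, apply \cref{thm:cset_hurewicz} to the Kan complex $\gnerve G$, and then observe that the remaining arrows in the defining composite are isomorphisms by \cref{thm:graph_homology_iso}.
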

\begin{proof}
  This is an immediate consequence of the definition of the discrete Hurewicz homomorphism and \cref{thm:cset_hurewicz}.
\end{proof}

\subsection*{Fibration Category of Graphs}
Via \cref{thm:main}, we may view the nerve functor as a functor $\gnerve \from \Graph \to \Kan$ taking values in Kan complexes.
From this, we induce a fibration category structure on the category of graphs.
\begin{restatable}{theorem}{fibcat} \label{thm:fibcat}
  The category $\Graph$ of graphs and graph maps carries a fibration category structure where:
    \begin{itemize}
      \item the weak equivalences are the weak homotopy equivalences, i.e., maps $f \from G \to H$ such that, for all $v \in G$ and $n \geq 0$, the map $A_n f \from A_n(G, v) \to A_n (H, f(v))$ is an isomorphism;
      \item the fibrations are maps $f \from G \to H$ which are sent to fibrations $\gnerve f \from \gnerve G \to \gnerve H$ under the nerve functor $\gnerve \from \Graph \to \cSet$.
    \end{itemize}
\end{restatable}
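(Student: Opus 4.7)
The plan is to transfer the fibration category structure on $\Kan$ along the nerve functor $\gnerve \from \Graph \to \Kan$, which lands in Kan complexes by \cref{thm:main}. By the isomorphism of \cref{thm:a_eq_cset_pi}, weak equivalences in $\Graph$ are precisely maps $f$ for which $\gnerve f$ is a weak equivalence in $\Kan$; fibrations are defined analogously. Axioms (1)--(4) of a fibration category then follow routinely from the preservation properties of $\gnerve$: two-out-of-three is functorial; $\gnerve$ preserves isomorphisms; the terminal object $I_0$ of $\Graph$ maps to the terminal object $\cube{0}$ of $\cSet$, and every graph is fibrant by \cref{thm:main}; and $\Graph$ is complete by \cref{thm:graph_complete} while $\gnerve$ preserves finite limits by \cref{thm:nerve_fin_lim}, so fibrations and acyclic fibrations are stable under pullback.

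The main obstacle is constructing factorizations (axiom (5)). The strategy is to use the path graph $PH$ as a path object. Given $f \from G \to H$, form the pullback $P_f := G \times_H PH$ of $f$ along $\face{}{1,0} \from PH \to H$ and factor $f$ as $G \to P_f \to H$, where the first map sends $v$ to the pair of $v$ with the constant path at $f(v)$, and the second sends $(v, q)$ to $\face{}{1,1}(q)$.

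The crucial step is verifying that this is a weak equivalence followed by a fibration after applying $\gnerve$. Preservation of finite limits gives $\gnerve P_f \cong \gnerve G \times_{\gnerve H} \gnerve PH$, so the task reduces to showing $\gnerve PH$ serves as a suitable path object for $\gnerve H$ in $\Kan$. The plan is to identify $\gnerve PH \cong \rhom(\cube{1}, \gnerve H)$: an $n$-cube on either side corresponds to a stable map $\gexp{I_\infty}{n+1} \to H$, via the hom-tensor adjunction in $\Graph$ together with the observation that a stable map $\gexp{I_\infty}{n} \to PH$ takes values in only finitely many paths (as it is determined by its restriction to a bounded region of $\gexp{I_\infty}{n}$), each of which is individually stable, so the composite is uniformly stable in the path direction. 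Once this identification and its compatibility with cubical operators is checked, the factorization becomes the standard mapping path space factorization in cubical sets, yielding a weak equivalence followed by a Kan fibration: the former from the fact that the constant-path inclusion $\gnerve H \to \rhom(\cube{1}, \gnerve H)$ is a weak equivalence stable under pullback along fibrations, and the latter from the pullback-power property of $\rhom$ applied to the cofibration $\bdcube{1} \ito \cube{1}$.
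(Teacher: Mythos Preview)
Your proposal is correct and follows essentially the same approach as the paper: both rely on the identification $\gnerve(PH) \cong \rhom(\cube{1}, \gnerve H)$ to show that the path graph furnishes the required factorizations, with the remaining axioms following from $\gnerve$ preserving finite limits and landing in $\Kan$. The only cosmetic difference is that the paper factors the diagonal $G \to G \times G$ through $PG$ and then invokes Brown's Factorization Lemma, whereas you carry out the mapping-path-space construction for an arbitrary $f$ directly---but this is precisely what Brown's lemma unwinds to.
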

Before proving this, we consider factorization of the diagonal map separately.
Recall that, for any graph $G$, we have a commutative triangle 
\[ \begin{tikzcd}
  & PG \ar[rd, "{(\face{*}{1,0}, \face{*}{1,1})}"] & \\
  G \ar[rr, "{(\id[G], \id[G])}"] \ar[ur, hook] && G \times G
\end{tikzcd} \]
where $G \ito PG$ sends a vertex $v$ to the constant path on $v$.
\begin{lemma} \label{thm:graph_fib_diag}
  For any graph $G$, applying $\gnerve \from \Graph \to \cSet$ to the diagram
  \[ \begin{tikzcd}
    & PG \ar[rd, "{(\face{*}{1,0}, \face{*}{1,1})}"] & \\
    G \ar[rr, "{(\id[G], \id[G])}"] \ar[ur, hook] && G \times G
  \end{tikzcd} \]
  gives a factorization of the diagonal map $\gnerve G \to \gnerve G \times \gnerve G$ as a weak equivalence followed by a fibration.
\end{lemma}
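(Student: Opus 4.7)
The plan is to split the claim into two parts: (a) the map $\gnerve G \to \gnerve PG$ induced by the inclusion $\iota \from G \ito PG$ of constant paths is a weak equivalence, and (b) the map $\gnerve PG \to \gnerve(G \times G) \iso \gnerve G \times \gnerve G$ induced by $(\face{}{1,0}, \face{}{1,1})$ is a Kan fibration, where the isomorphism on the right is supplied by \cref{thm:nerve_fin_lim}.

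For (a), \cref{thm:a_eq_cset_pi} reduces the claim to showing that $A_n \iota$ is an isomorphism at every basepoint. Since $\face{}{1,0} \from PG \to G$ is a retraction of $\iota$, one only needs an A-homotopy from $\iota \circ \face{}{1,0}$ to $\id_{PG}$. I would define a graph map $H \from PG \gtimes I_\infty \to PG$ by $H(p, t)(s) := p(s - t)$, which shifts each stabilizing path to the right; then $H(p, 0) = p$, and $H(p, t)$ is the constant path at the left stable value of $p$ once $t$ exceeds the stabilization bound of $p$. For any based representative $f \from (\gexp{I_\infty}{n}, \gexp{I_{\geq M}}{n}) \to (PG, \iota(v))$, stability of $f$ in the first $n$ directions makes $\{f(w) : w \in \gexp{I_\infty}{n}\}$ a finite set of paths with a common stabilization bound $M'$; restricting $H \circ (f \gtimes \id)$ to $\gexp{I_\infty}{n} \gtimes I_{M'}$ then yields a based A-homotopy from $f$ to $\iota \circ \face{}{1,0} \circ f$.

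For (b), I would establish a natural isomorphism of cubical sets $\gnerve(PG) \iso \rhom(\cube{1}, \gnerve G)$ compatible with the endpoint projections to $\gnerve G \times \gnerve G$. An $n$-cube of $\gnerve(PG)$ is a stabilizing map $\gexp{I_\infty}{n} \to PG$; by the adjunction $\uvar \gtimes I_\infty \adj \ghom{I_\infty}{\uvar}$, this corresponds to a map $\gexp{I_\infty}{n+1} \to G$ that stabilizes in the first $n$ directions and is pointwise stabilizing in the last. Since such a map has finite image (by stability in the first $n$ directions), pointwise stability in the last direction upgrades to uniform stability, so the datum is equivalently an $(n+1)$-cube of $\gnerve G$, i.e., an $n$-cube of $\rhom(\cube{1}, \gnerve G)$. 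Under this identification, $(\face{}{1,0}, \face{}{1,1})$ corresponds to the standard evaluation map $\rhom(\cube{1}, \gnerve G) \to \gnerve G \times \gnerve G$; since $\gnerve G$ is a Kan complex (\cref{thm:main}) and $\cube{0} \sqcup \cube{0} \ito \cube{1}$ is a monomorphism, this evaluation is a Kan fibration by the pushout-product property, available from the companion paper (cf.~\cite{carranza-kapulkin:homotopy-cubical,doherty-kapulkin-lindsey-sattler}).

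The main obstacle is the bookkeeping in (b): one must verify that the cubical operations on $\gnerve PG$ (inherited from the first $n$ coordinates of $\gexp{I_\infty}{n}$) match those on $\rhom(\cube{1}, \gnerve G)$ (inherited from the $\cube{n}$ factor of $\cube{1} \gprod \cube{n}$) under the bijection on $n$-cubes, and that the pointwise-to-uniform stability upgrade interacts correctly with face, degeneracy, and connection operators. Part (a), by contrast, is largely routine once the sliding homotopy $H$ is in hand.
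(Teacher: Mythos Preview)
Your part (b) is essentially the paper's argument: identify $\gnerve(PG)$ with $\rhom(\cube{1},\gnerve G)$ and then invoke the standard path-object factorization for Kan complexes from the companion paper. The paper, however, uses this single identification to handle \emph{both} halves at once: under the isomorphism $\gnerve(PG)\cong\rhom(\cube{1},\gnerve G)$, the inclusion $\gnerve\iota$ becomes the degeneracy section $\sigma^*_1\colon\gnerve G\hookrightarrow\rhom(\cube{1},\gnerve G)$, and \cite[Prop.~2.21]{carranza-kapulkin:homotopy-cubical} already records that this is a weak equivalence (indeed a deformation retract). So once you have done the work in (b), part (a) is free; your separate argument for (a) is unnecessary.

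More seriously, that separate argument is incorrect as written. The sliding map $H(p,t)(s)=p(s-t)$ does \emph{not} contract $p$ to the constant path at its left stable value. If $p$ stabilizes to $a$ on the left and to $b$ on the right, then for every finite $t$ the path $s\mapsto p(s-t)$ still runs from $a$ to $b$: for $s\gg t$ we have $p(s-t)=b$. Shifting a bi-infinite stabilizing path never makes it constant. So the assertion ``$H(p,t)$ is the constant path at the left stable value of $p$ once $t$ exceeds the stabilization bound'' is false, and the homotopy you build from it does not land in $\iota\circ\partial_{1,0}\circ f$.

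If you want a direct graph-level argument for (a), replace the shift by a truncation: having chosen a common stabilization bound $M'$ for the finitely many paths in the image of $f$, set $K(p,t)(s)=p\bigl(\min(s,\,M'-t)\bigr)$ for $t\in\{0,\dots,2M'\}$. Then $K(p,0)=p$ (using right stability at $M'$) and $K(p,2M')$ is constant at the left stable value (using left stability at $-M'$); one checks easily that $K$ is a graph map in each variable. But the cleaner route is simply to note that your identification in (b) already makes $\gnerve\iota$ into $\sigma^*_1$, whence the weak-equivalence claim is immediate.
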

\begin{proof}
  Applying naturality of the isomorphism in \cref{thm:nerve_hom_iso_fin} to the maps $\cube{1} \to \cube{0}$ and $\bd \cube{1} \to \cube{1}$ gives that the diagram
  \[ \begin{tikzcd}
    \gnerve G \ar[r] \ar[d, "\cong"'] & \gnerve (PG) \ar[r] \ar[d, "\cong"] & \gnerve (G \times G) \ar[d, "\cong"] \\
    \gnerve G \ar[r] & \rhom(\cube{1}, \gnerve G) \ar[r] & \gnerve G \times \gnerve G
  \end{tikzcd} \]
  commutes.
  \cite[Prop.~2.22]{carranza-kapulkin:homotopy-cubical} shows the bottom composite provides a factorization of the diagonal map into a weak equivalence followed by a fibration.
  Thus, the top left map is a weak equivalence and the top right map is a fibration.
\end{proof}
\begin{proof}[Proof of \cref{thm:fibcat}]
    Both the two-out-of-six property for weak equivalences and closure of acyclic fibrations under isomorphisms follow from \cref{thm:kan_fib}.
    Pullbacks exist as $\Graph$ is complete; they preserve (acyclic) fibrations by \cref{thm:nerve_fin_lim}.
    \cref{thm:main} shows that the nerve of every graph is a Kan complex.
    Factorization of the diagonal map is shown in \cref{thm:graph_fib_diag}, and this gives the factorization of an arbitrary map via \cite[Factorization lemma]{brown}.
\end{proof}
The following proposition gives a useful criterion for verifying whether a map is a fibration.
\begin{proposition} \label{graph_fib_equiv}
  A graph map $f \from G \to H$ is a fibration if and only if, for any commutative square
  \[ \begin{tikzcd}
      \reali[m]{\dfobox} \ar[r] \ar[d] & G \ar[d, "f"] \\
      \gexp{I_m}{n} \ar[r] & H
  \end{tikzcd} \]
  there exists $k \geq 0$ and a lift $\gexp{I_{m + 2k}}{n} \to G$ of the following square.
  \[ \begin{tikzcd}
      \reali[m+2k]{\dfobox} \ar[r, "{c_*}^{k}"] \ar[d] & \reali[m]{\dfobox} \ar[r] \ar[d] & G \ar[d, "f"] \\
      \gexp{I_{m+2k}}{n} \ar[r, "{c_*}^k"'] \ar[urr, dotted] & \gexp{I_m}{n} \ar[r] & H
  \end{tikzcd} \]
\end{proposition}
\begin{proof}
  The map $f$ is a fibration if and only if any commutative square
  \[ \begin{tikzcd}
      \dfobox \ar[r] \ar[d] & \gnerve G \ar[d, "\gnerve f"] \\
      \cube{n} \ar[r] & \gnerve H
  \end{tikzcd} \]
  admits a lift.
  As $\gnerve$ is a sequential colimit, every such square and lift (if it exists) factors as
  \[ \begin{tikzcd}
      \dfobox \ar[r] \ar[d] & {\gnerve[m] G} \ar[r, "(c^*)^k"] \ar[d, "{\gnerve[m] f}"] & \gnerve[m + 2k] G \ar[r] \ar[d, "{\gnerve[m+2k] f}"] & \gnerve G \ar[d, "\gnerve f"] \\
      \cube{n} \ar[r] \ar[urr, dotted] & {\gnerve[m] H} \ar[r, "(c^*)^k"] & \gnerve[m+2k] H \ar[r] & \gnerve H
  \end{tikzcd} \]
  for some $m \geq 1$ and $k \geq 0$.
  The left two squares in this diagram correspond to a diagram
  \[ \begin{tikzcd}
      \reali[m+2k]{\dfobox} \ar[r, "(c_*)^k"] \ar[d] & \reali[m]{\dfobox} \ar[r] \ar[d] & G \ar[d, "f"] \\
      \gexp{I_{m+2k}}{n} \ar[r, "(c_*)^k"'] \ar[urr, dotted] & \gexp{I_m}{n} \ar[r] & H
  \end{tikzcd} \]
  in $\Graph$ by the realization-nerve adjunction.
\end{proof}
By definition, the nerve functor $\gnerve \from \Graph \to \cSet$ reflects (and preserves) fibrations.
We show that it reflects weak equivalences as well.
\begin{proposition} \label{thm:nerve_we_create}
  The nerve functor $\gnerve \from \Graph \to \cSet$ reflects weak equivalences.
  That is, given a map $f \from G \to H$, if $\gnerve f \from \gnerve G \to \gnerve H$ is a weak equivalence then $f$ is a weak equivalence.
\end{proposition}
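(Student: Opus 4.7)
The plan is a short diagram chase reducing the claim to the definitions of weak equivalence in the two categories, mediated by the natural isomorphism $A_n(G, v) \iso \pi_n(\gnerve G, v)$ of \cref{thm:a_eq_cset_pi}.

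First, I would recall from \cref{thm:fibcat} that a graph map $f \from G \to H$ is a weak equivalence in $\Graph$ if and only if, for every vertex $v \in G$ and every $n \geq 0$, the induced map $A_n f \from A_n(G, v) \to A_n(H, f(v))$ is an isomorphism. So it suffices to show that, under the hypothesis that $\gnerve f$ is a weak equivalence in $\cSet$, each such $A_n f$ is an isomorphism.

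Next, fix $v \in G$ and $n \geq 0$. Naturality of the isomorphism of \cref{thm:a_eq_cset_pi} in the pointed graph $(G, v)$ gives a commuting square
\[ \begin{tikzcd}
A_n(G, v) \ar[r, "A_n f"] \ar[d, "\iso"'] & A_n(H, f(v)) \ar[d, "\iso"] \\
\pi_n(\gnerve G, v) \ar[r, "\pi_n(\gnerve f)"'] & \pi_n(\gnerve H, f(v))
\end{tikzcd} \]
with both vertical maps isomorphisms, so it suffices to check that the bottom map is an isomorphism. By definition of weak equivalence in $\cSet$, the geometric realization $\reali{\gnerve f}$ is a weak homotopy equivalence in $\Top$, and hence induces an isomorphism on $\pi_n$ at every basepoint of $\reali{\gnerve G}$ --- in particular at the point coming from the $0$-cube $v$ of $\gnerve G$. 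Applying the natural isomorphism $\pi_n(X, x) \iso \pi_n(\reali{X}, x)$ of \cref{thm:cset_pi_eq_pi} then identifies $\pi_n(\gnerve f)$ at basepoint $v$ with $\pi_n(\reali{\gnerve f})$ at that basepoint, which is an isomorphism.

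No substantive obstacle arises: the three inputs --- the characterization of weak equivalences in $\Graph$ provided by \cref{thm:fibcat}, the natural isomorphism $A_n \iso \pi_n \circ \gnerve$ of \cref{thm:a_eq_cset_pi}, and the identification $\pi_n(X, x) \iso \pi_n(\reali{X}, x)$ of \cref{thm:cset_pi_eq_pi} --- are all already in hand, and compose cleanly to yield the reflection property.
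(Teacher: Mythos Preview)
Your proposal is correct and follows essentially the same route as the paper. The paper's proof cites \cref{thm:a_eq_cset_pi} together with an external Whitehead-type result (\cite[Thm.~4.10]{carranza-kapulkin:homotopy-cubical}) stating that a map of Kan complexes is a weak equivalence precisely when it induces isomorphisms on all homotopy groups; you effectively unpack this external citation by going through the definition of weak equivalence in $\cSet$ via geometric realization and invoking \cref{thm:cset_pi_eq_pi}, which amounts to the same thing.
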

\begin{proof}
  Follows from \cref{thm:a_eq_cset_pi} and \cite[Thm.~4.7]{carranza-kapulkin:homotopy-cubical}.
\end{proof}
As the nerve functor preserves finite limits, it is straightforward to show it is exact.
\begin{proposition} \label{thm:nerve_exact}
  The nerve functor $\gnerve \from \Graph \to \cSet$ is exact.
\end{proposition}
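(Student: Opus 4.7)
The plan is to verify each of the four conditions in the definition of an exact functor in turn, all of which follow from results established earlier in the paper.

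First I would handle preservation of fibrations and terminal objects. Preservation of fibrations is immediate from the definition of the fibration category structure on $\Graph$ given in \cref{thm:fibcat}: a map $f$ in $\Graph$ is declared a fibration precisely when $\gnerve f$ is a Kan fibration. Preservation of the terminal object, as well as preservation of pullbacks along fibrations, follows at once from \cref{thm:nerve_fin_lim}, which shows $\gnerve$ preserves all finite limits.

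Next I would verify that $\gnerve$ preserves weak equivalences, hence acyclic fibrations. A map $f \from G \to H$ is a weak equivalence in $\Graph$ iff it induces an isomorphism $A_n f \from A_n(G, v) \to A_n(H, f(v))$ for every basepoint $v$ and every $n \geq 0$. By the naturality of the isomorphism in \cref{thm:a_eq_cset_pi}, this is equivalent to $\pi_n \gnerve f$ being an isomorphism for every basepoint and every $n$. By \cref{thm:cset_pi_eq_pi}, this is in turn equivalent to $\pi_n \reali{\gnerve f}$ being an isomorphism on every basepoint, which is by definition the condition that $\gnerve f$ be a weak equivalence in $\cSet$. Combined with preservation of fibrations, this also gives preservation of acyclic fibrations.

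There is no real obstacle: every ingredient has already been assembled. The only mild subtlety worth being careful about is ensuring the isomorphism of A-groups with homotopy groups of the nerve is tracked across all choices of basepoint, which is handled by the naturality clause of \cref{thm:a_eq_cset_pi}. Once all four conditions are checked, exactness follows directly from the definition.
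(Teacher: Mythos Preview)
Your proposal is correct and follows essentially the same approach as the paper. The paper's proof is even terser: it cites the definition for preservation of fibrations, \cref{thm:nerve_fin_lim} for preservation of finite limits, and \cref{thm:nerve_we_create} for preservation of acyclic fibrations; your argument for weak equivalences via \cref{thm:a_eq_cset_pi} and \cref{thm:cset_pi_eq_pi} is exactly the content that underlies \cref{thm:nerve_we_create}, just unpacked.
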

\begin{proof}
  The nerve functor preserves fibrations and acyclic fibrations by definition and by \cref{thm:nerve_we_create}, respectively.
  \cref{thm:nerve_fin_lim} shows that it preserves all finite limits.
\end{proof}

A consequence of \cref{thm:nerve_exact} is that \emph{fibration sequences} induce a long exact sequence of homotopy groups.
\begin{theorem} \label{thm:graph_les}
  Let $f \from (G, v) \to (H, w)$ be a fibration between pointed graphs and $(F, v)$ be the fiber of $f$ over $w$, i.e.~the pullback
  \[ \begin{tikzcd}
    (F,v) \ar[d] \ar[r, "i"] \ar[rd, phantom, "\ulcorner" very near start] & (G, v) \ar[d, "f"] \\
    (I_0, 0) \ar[r, "w"] & (H,w)
  \end{tikzcd} \]
  in $\Graph_*$.
  Then, there is a long exact sequence
  \[ \begin{tikzcd}
    \dots \rar & A_n (F,v) \ar[r, "A_n i"] & A_n (G,v) \ar[r, "A_n f"] \ar[d, phantom, ""{coordinate, name=a}] & A_n (H,w) \ar[dll, rounded corners, 
    to path={ -- ([xshift=2ex]\tikztostart.east)
              |- (a) [near end]\tikztonodes
              -| ([xshift=-2ex]\tikztotarget.west)
              -- (\tikztotarget) }] & {} \\
    & A_{n-1} (F,v) \ar[r, "A_{n-1} i"] & A_{n-1} (G,v) \ar[r, "A_{n-1} f"] \ar[d, phantom, "\dots"{description, name=b}] & A_{n-1} (H,w) \ar[dll, rounded corners, 
    to path={ -- ([xshift=2ex]\tikztostart.east)
            |- (b) [near end]\tikztonodes }] & \\
    & A_1 (F,v) \ar[r, "A_1 i"] \ar[from=b, rounded corners, to path={ -| ([xshift=-2ex]\tikztotarget.west)
            -- (\tikztotarget) }] & A_1 (G,v) \ar[d, phantom, ""{coordinate, name=c}] \ar[r, "A_1 f"] & A_1 (H,w) \ar[dll, rounded corners, 
    to path={ -- ([xshift=2ex]\tikztostart.east)
            |- (c) [near end]\tikztonodes
            -| ([xshift=-2ex]\tikztotarget.west)
            -- (\tikztotarget) }] & {} \\
    & A_0 (F, v) \ar[r, "A_0 i"] & A_0 (G, v) \ar[r, "A_0 f"] & A_0 (H, w).
  \end{tikzcd} \] 
\end{theorem}
\begin{proof}
  By \cref{thm:nerve_exact}, the map $\gnerve f \from (\gnerve G, v) \to (\gnerve H, w)$ is a fibration and its fiber is naturally isomorphic to $(\gnerve F, v)$.
  The result then follows by applying \cite[Cor.~4.6]{carranza-kapulkin:homotopy-cubical} and \cref{thm:a_eq_cset_pi}.
\end{proof}

There are two large classes of examples of fibrations: fiber bundles and $m$-fibrations.

\begin{definition}[Babson] \label{def:fiber-bundle}
  For $m \geq 1$, a graph map $f \from G \to H$ is an \emph{$m$-fiber bundle} with fiber $F$ if, for any $u \from \gexp{I_1}{n} \to H$, the left map in the pullback diagram
  \[ \begin{tikzcd}
    P \ar[r] \ar[d] \ar[rd, phantom, "\ulcorner" very near start] & G \ar[d, "f"] \\
    \gexp{I_1}{n} \ar[r, "u"] & H
  \end{tikzcd} \]
  is a product projection $\gexp{I_1}{n} \times F \to \gexp{I_1}{n}$ from the categorical product of $\gexp{I_1}{n}$ and $F$.
\end{definition}

\begin{definition}
  A fiber bundle $G \to H$ is \emph{trivial} if it is a product projection $H \times F \to H$.
\end{definition}

\begin{proposition} \label{pb-m-fib-bundle-is-m-fib-bundle}
    Any pullback of an $m$-fiber bundle is an $m$-fiber bundle.
\end{proposition}
\begin{proof}
    Follows from the two-pullback lemma. \qedhere
\end{proof}

\begin{proposition} \label{m-fib-bundle-is-1-fib-bundle}
    For $m \geq 1$, every $(m+1)$-fiber bundle is an $m$-fiber bundle.
\end{proposition}
\begin{proof}
    Fix an $(m+1)$-fiber bundle $f \from G \to H$ and a map $u \from \gexp{I_m}{n} \to H$.
    The map $\gexp{l}{n} \from \gexp{I_{m+1}}{n} \to \gexp{I_m}{n}$ admits a section $i \from \gexp{I_m}{n} \to \gexp{I_{m+1}}{n}$.
    By assumption, the left map in the pullback diagram
    \[ \begin{tikzcd}
        P \ar[rr] \ar[d] \ar[rd, phantom, "\ulcorner" very near start] & {} & G \ar[d, "f"] \\
        \gexp{I_{m+1}}{n} \ar[r, "\gexp{l}{n}"] & \gexp{I_m}{n} \ar[r, "u"] & H
    \end{tikzcd} \]
    is a product projection $\gexp{I_{m+1}}{n} \times F \to \gexp{I_{m+1}}{n}$.
    As the right map in the pullback square
    \[ \begin{tikzcd}
        \gexp{I_m}{n} \times F \ar[r, "{i \times \id}"] \ar[d] \ar[rd, phantom, "\ulcorner" very near start] & \gexp{I_{m+1}}{n} \times F \ar[d] \\
        \gexp{I_m}{n} \ar[r, "i"] & \gexp{I_{m+1}}{n}
    \end{tikzcd} \]
    is a product projection, the left map is as well.
    Thus, the outer square in
    \[ \begin{tikzcd}
        \gexp{I_m}{n} \times F \ar[r, "i \times \id"] \ar[d] \ar[rd, phantom, "\ulcorner" very near start] & \gexp{I_{m+1}}{n} \times F \ar[r] \ar[d] \ar[rd, phantom, "\ulcorner" very near start] & G \ar[d, "f"] \\
        \gexp{I_m}{n} \ar[r, "i"] & \gexp{I_{m+1}}{n} \ar[r, "u \circ \gexp{l}{n}"] & H
    \end{tikzcd} \]
    is a pullback whose left map is a product projection.
\end{proof}

Recall a graph $G$ is a \emph{retract} of a graph $H$ if there is an inclusion $i \colon G \ito H$ with a retraction $r \colon H \to G$, i.e.~$ri = \id$.

\begin{lemma} \label{retr-1-fib-bundle}
    If $H$ is a retract of $\gexp{I_1}{n}$ for some $n \geq 0$ then any 1-fiber bundle $G \to H$ is trivial.
\end{lemma}
\begin{proof}
    As $H$ is a retract of $\gexp{I_1}{n}$, we fix an inclusion $i \from H \ito \gexp{I_1}{n}$ and retraction $r \from \gexp{I_1}{n} \to H$.
    Given a fiber bundle $f \from G \to H$, the left map in the pullback diagram
    \[ \begin{tikzcd}
        F \times \gexp{I_1}{n} \ar[r] \ar[d] \ar[rd, phantom, "\ulcorner" very near start] & G \ar[d, "f"] \\
        \gexp{I_1}{n} \ar[r, "r"] & H
    \end{tikzcd} \] 
    is a product projection.
    By universal property of the pullback, the outer square in
    \[ \begin{tikzcd}
        G \ar[rrd, "\id", bend left] \ar[ddr, swap, "if", bend right] \ar[rd, dotted] & {} & {} \\
        {} & F \times \gexp{I_1}{n} \ar[r] \ar[d] \ar[rd, phantom, "\ulcorner" very near start] & H \ar[d, "f"] \\
        {} & \gexp{I_1}{n} \ar[r, "r"] & H
    \end{tikzcd} \]
    induces a map $G \to F \times \gexp{I_1}{n}$.
    By the two-pullback lemma, the left square in
    \[ \begin{tikzcd}
        G \ar[r, dotted] \ar[d, "f"'] & F \times \gexp{I_1}{n} \ar[r] \ar[d] \ar[rd, phantom, "\ulcorner" very near start] & G \ar[d, "f"] \\
        H \ar[r, hook, "i"] & \gexp{I_1}{n} \ar[r, "r"] & H
    \end{tikzcd} \]
    is a pullback.
    Therefore, the outer square in
    \[ \begin{tikzcd}
        G \ar[r, dotted] \ar[d, "f"'] \ar[rd, phantom, "\ulcorner" very near start] & F \times \gexp{I_1}{n} \ar[d] \ar[r] \ar[rd, phantom, "\ulcorner" very near start] & F \ar[d] \\
        H \ar[r, hook, "i"] & \gexp{I_1}{n} \ar[r] & I_0
    \end{tikzcd} \]
    is a pullback, which proves that $G$ is a product of $H$ and $F$, and $f$ is the projection onto the first component.
\end{proof}

\begin{theorem}
    For any $m \geq 1$, a 1-fiber bundle is an $m$-fiber bundle.
\end{theorem}
\begin{proof}
    Fix a 1-fiber bundle $f \from G \to H$ and a map $u \from \gexp{I_m}{n} \to H$.
    By \cref{pb-m-fib-bundle-is-m-fib-bundle}, the left map in the pullback diagram
    \[ \begin{tikzcd}
        u^*G \ar[r] \ar[d] \ar[rd, phantom, "\ulcorner" very near start] & G \ar[d, "f"] \\
        \gexp{I_m}{n} \ar[r, "u"] & H
    \end{tikzcd} \]
    is a 1-fiber bundle.
    By \cref{retr-1-fib-bundle}, it suffices to show $\gexp{I_m}{n}$ is a retract of $\gexp{I_1}{k}$ for some $k \geq 0$.

    Define an embedding $i \from I_m \ito \gexp{I_1}{m}$ by sending a vertex $k \in I_m$ to the tuple $(x_1, \dots, x_m)$ where $x_i = 1$ if $i \leq k$ and $x_i = 0$ if $i > k$.
    This map has a retraction $r \from \gexp{I_1}{m} \to I_m$ which sends a tuple $(x_1, \dots, x_n)$ to its sum $\sum\limits_{i=1}^{n} x_i$.
    Thus, the map $\gexp{i}{n} \from \gexp{I_m}{n} \to \gexp{I_1}{mn}$ has a retraction $\gexp{r}{n} \from \gexp{I_1}{mn} \to \gexp{I_m}{n}$. \qedhere
\end{proof}

In particular, if $f \from X \to Y$ is an $m$-fiber bundle for some $m \geq 1$ then it is a fiber bundle for all $m \geq 1$.
In the remainder of this section, we refer to such an $f$ as simply a \emph{fiber bundle}.

\begin{theorem} \label{1-fib-bundle-is-fib}
  Every fiber bundle is a fibration.
\end{theorem}
\begin{proof}
  Fix a fiber bundle $f \from G \to H$.
  We apply \cref{graph_fib_equiv} and consider a lifting problem
  \[ \begin{tikzcd}
      \reali[m]{\dfobox} \ar[r] \ar[d, hook] & G \ar[d, "f"] \\
      \gexp{I_m}{n} \ar[r] & H
  \end{tikzcd} \]
  Taking a pullback of the right and bottom map gives a factorization of this square as
  \[ \begin{tikzcd}
      \reali[m]{\dfobox} \ar[r, dotted] \ar[d, hook] & \gexp{I_1}{n} \times F \ar[r] \ar[d] \ar[rd, phantom, "\ulcorner" very near start] & G \ar[d, "f"] \\
      \gexp{I_m}{n} \ar[r, "\id"] & \gexp{I_m}{n} \ar[r] & H
  \end{tikzcd} \]
  The middle map is a fibration since it is a product projection.
  Hence, there exists $k \geq 0$ such that the outer square in
  \[ \begin{tikzcd}
      \reali[m+2k]{\dfobox} \ar[r, "c_*^{k}"] \ar[d, hook] & \reali[m]{\dfobox} \ar[d, hook] \ar[r, dotted] & \gexp{I_1}{n} \times F \ar[d] \\
      \gexp{I_{m+2k}}{n} \ar[r, "c_*^k"'] \ar[urr, dotted] & \gexp{I_m}{n} \ar[r, "\id"] & \gexp{I_m}{n}
  \end{tikzcd} \]
  admits a lift.
  Post-composing this lift with the map $\gexp{I_1}{n} \times F \to G$ gives a lift of the outer square in
  \[ \begin{tikzcd}[baseline=0.75em, anchor=south]
      \reali[m+2k]{\dfobox} \ar[r, "c_*^{k}"] \ar[d, hook] & \reali[m]{\dfobox} \ar[d, hook] \ar[r] & G \ar[d, "f"] \\
      \gexp{I_{m+2k}}{n} \ar[r, "c_*^k"'] \ar[urr, dotted] & \gexp{I_m}{n} \ar[r] & H
  \end{tikzcd} \qedhere \]
\end{proof}
\begin{corollary}
  A pointed fiber bundle $f \from (G, v) \to (H, w)$ with fiber $(F, v)$ induces a long exact sequence
  \[ \begin{tikzcd}
      \dots \rar & A_n (F,v) \ar[r, "A_n i"] & A_n (G,v) \ar[r, "A_n f"] \ar[d, phantom, ""{coordinate, name=a}] & A_n (H,w) \ar[dll, rounded corners, 
      to path={ -- ([xshift=2ex]\tikztostart.east)
                |- (a) [near end]\tikztonodes
                -| ([xshift=-2ex]\tikztotarget.west)
                -- (\tikztotarget) }] & {} \\
      & A_{n-1} (F,v) \ar[r, "A_{n-1} i"] & A_{n-1} (G,v) \ar[r, "A_{n-1} f"] \ar[d, phantom, "\dots"{description, name=b}] & A_{n-1} (H,w) \ar[dll, rounded corners, 
      to path={ -- ([xshift=2ex]\tikztostart.east)
              |- (b) [near end]\tikztonodes }] & \\
      & A_1 (F,v) \ar[r, "A_1 i"] \ar[from=b, rounded corners, to path={ -| ([xshift=-2ex]\tikztotarget.west)
              -- (\tikztotarget) }] & A_1 (G,v) \ar[d, phantom, ""{coordinate, name=c}] \ar[r, "A_1 f"] & A_1 (H,w) \ar[dll, rounded corners, 
      to path={ -- ([xshift=2ex]\tikztostart.east)
              |- (c) [near end]\tikztonodes
              -| ([xshift=-2ex]\tikztotarget.west)
              -- (\tikztotarget) }] & {} \\
      & A_0 (F, v) \ar[r, "A_0 i"] & A_0 (G, v) \ar[r, "A_0 f"] & A_0 (H, w).
    \end{tikzcd} \]
  of A-homotopy groups.
\end{corollary}
\begin{proof}
  Follows from \cref{1-fib-bundle-is-fib,thm:graph_les}.
\end{proof}

The second class of maps which are fibrations is the class of $m$-fibrations.
\begin{definition}
  For $m \geq 1$, a graph map $f \from G \to H$ is an \emph{$m$-fibration} if $\gnerve[m] f \from \gnerve[m] G \to \gnerve[m] H$ is a Kan fibration.
\end{definition}
\begin{proposition} \label{mult_sat}
  Let $m, k \geq 1$. \begin{enumerate}
      \item The inclusion $\{ 0, km \} \to I_{km}$ is in the saturation of $\{ \varnothing \to I_0, \{ 0, m \} \to I_m \}$.
      \item For $\varepsilon = 0, 1$, the end-point inclusion $\{ \varepsilon km \} \to I_{km}$ is in the saturation of $\left\{ \{ \varepsilon m \} \to I_{m} \right\}$.
      \item The inclusion $\bd \gexp{I_{km}}{n} \to \gexp{I_{km}}{n}$ is in the saturation of $\left\{ \bd \gexp{I_m}{j} \to \gexp{I_m}{j} \mid j \leq n \right\}$.
      \item The inclusion $\reali[km]{\dfobox} \to \gexp{I_{km}}{n}$ is in the saturation of $\left\{ \reali[m]{\dfobox[j]} \to \gexp{I_{km}}{j} \mid j \leq n \right\}$.
  \end{enumerate}
\end{proposition}
\begin{proof} \begin{enumerate}
  \item By induction, if $\bd I_{km} \to I_m$ lies in the saturation then the bottom map in the pushout
  \[ \begin{tikzcd}
      \{ 0, km \} \sqcup \{ 0, m \} \ar[r] \ar[d, "{[f, g]}"'] \ar[rd, phantom, "\ulcorner" very near end] & I_{km} \sqcup I_m \ar[d] \\
      \{ 0, km, (k+1)m \} \ar[r] & I_{(k+1)m}
  \end{tikzcd} \]
  is in the saturation as a pushout of the coproduct of maps in the saturation, where $[f, g]$ is defined by
  \[ \begin{array}{l l}
      f(0) = 0 & f(km) = km \\
      g(0) = km & g(m) = (k+1)m.
  \end{array} \]
  The inclusion $\{ 0, (k+1)m \} \to \{ 0, km, (k+1)m \}$ is a pushout along $\varnothing \to I_0$, thus the composite $\{ 0, (k+1)m \} \to I_{(k+1)m}$ lies in the saturation.
  \item By induction, if $\{ \varepsilon m \} \to I_{km}$ lies in the saturation then the bottom map in the pushout
  \[ \begin{tikzcd}
      I_0 \ar[r, "\varepsilon km"] \ar[d, "(1-\varepsilon)m"'] \ar[rd, phantom, "\ulcorner" very near end] & I_{km} \ar[d] \\
      I_m \ar[r] & I_{(k+1)m}
  \end{tikzcd} \]
  lies in the saturation.
  Pre-composing with the end-point inclusion $\{ \varepsilon m \} \to I_m$ gives one endpoint inclusion $\{ 0 \} \to I_{(k+1)m}$.
  \item The inclusion $\bd \gexp{I_{km}}{n} \to \gexp{I_{km}}{n}$ may be written as a pushout product
  \[ (\bd \gexp{I_{km}}{n} \to \gexp{I_{km}}{n}) = (\{ 0, km \} \to I_{km})^{\pp[\gtimes] n}. \]
  Noting the equality
  \[ \{ \varnothing \to I_0, \{ 0, m \} \to I_m \}^{\pp[\gtimes] n} = \{ \bd \gexp{I_m}{j} \to \gexp{I_m}{j} \mid j \leq n \}, \]
  this follows from (1) by \cite[Prop.~5.3.4]{hovey-shipley-smith}.
  \item The inclusion $\reali[km]{\dfobox} \to \gexp{I_{km}}{n}$ may be written as a pushout product
  \[ (\reali[km]{\dfobox} \to \gexp{I_m}{n}) = (\{ 0, km \} \to I_{km})^{\pp[\gtimes] (i-1)} \pp[\gtimes] (\{ \varepsilon km \} \to I_{km}) \pp[\gtimes] (\{ 0, km \} \to I_{km})^{\pp[\gtimes] n-i}. \]
  Noting the equality
  \begin{align*}
    &\{ \varnothing \to I_0, \{ 0, m \} \to I_m \}^{\pp[\gtimes] i-1} \pp[\gtimes] \{ \{ \varepsilon m \} \to I_m \} \pp \{ \varnothing \to I_0, \{ 0, m \} \to I_m \}^{\pp[\gtimes] n-i} \\
    &\qquad = \{ \reali[m]{\dfobox[j]} \to \gexp{I_m}{j} \mid j \leq n \},
  \end{align*}   
  this follows from (1) and (2) by \cite[Prop.~5.3.4]{hovey-shipley-smith}. \qedhere
\end{enumerate} \end{proof}
\begin{theorem} \label{mfib_is_fib}
  Let $k, m \geq 1$.
  Every $m$-fibration is both a $km$-fibration and a fibration.
\end{theorem}
\begin{proof}
  Let $f \from G \to H$ be an $m$-fibration.
  We have that $f$ is a $km$-fibration by \cref{mult_sat}.
  To see $f$ is a fibration, we apply \cref{graph_fib_equiv}.
  For a commutative square,
  \[ \begin{tikzcd}
      \reali[t]{\dfobox} \ar[r] \ar[d] & G \ar[d] \\
      \gexp{I_t}{n} \ar[r] & H
  \end{tikzcd} \]
  let $k \geq 0$ be the smallest non-negative integer such that $t + k$ is a multiple of $m$.
  As $f$ is a $(t+k)$-fibration, the required lift exists.
\end{proof}

\cref{thm:nerve_exact} also shows that the nerve functor preserves loop spaces.
\begin{proposition} \label{thm:loopsp_graph_cset}
  The square
  \[ \begin{tikzcd}
      \Graph_* \ar[r, "\gnerve"] \ar[d, "\Omega"'] & \Kan_* \ar[d, "\Omega"] \\
      \Graph_* \ar[r, "\gnerve"] & \Kan_*
  \end{tikzcd} \]
  commutes up to natural isomorphism.
\end{proposition}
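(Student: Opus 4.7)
The plan is to exploit the fact that both $\loopsp(G,v)$ in $\Graph_*$ and $\loopsp(X,x)$ in $\Kan_*$ are defined as pullbacks (\cref{thm:graph_loopsp_pb} and the definition following it), and then use that $\gnerve$ preserves finite limits (\cref{thm:nerve_fin_lim}). Given a pointed graph $(G,v)$, applying $\gnerve$ to the pullback square
\[ \begin{tikzcd}
    \loopsp(G, v) \ar[r] \ar[d] \ar[rd, phantom, "\ulcorner" very near start] & PG \ar[d, "{(\face{}{1,0}, \face{}{1,1})}"] \\
    I_0 \ar[r, "{(v, v)}"] & G \times G
  \end{tikzcd} \]
yields a pullback square in $\cSet$ with corners $\gnerve \loopsp(G,v)$, $\gnerve PG$, $\gnerve I_0 \iso \cube{0}$, and $\gnerve(G\times G) \iso \gnerve G \times \gnerve G$. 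If I can identify this pullback with the one defining the cubical loop space of $(\gnerve G, v)$, I am done.

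The key step is therefore to construct a natural isomorphism $\gnerve PG \iso \rhom(\cube{1}, \gnerve G)$ which makes the endpoint maps $(\face{}{1,0}, \face{}{1,1}) \from PG \to G \times G$ correspond to the evaluation maps $(\face{*}{1,0}, \face{*}{1,1}) \from \rhom(\cube{1}, \gnerve G) \to \gnerve G \times \gnerve G$. To build this isomorphism at the level of $n$-cubes, I unpack definitions: an $n$-cube of $\gnerve PG$ is a stable map $f \from \gexp{I_\infty}{n} \to PG$, and since $PG$ is a subgraph of $\ghom{I_\infty}{G}$ consisting of paths stable in the two directions $(1,0)$ and $(1,1)$, such an $f$ transposes (via the closed monoidal structure on $\Graph$) to a map $\tilde f \from \gexp{I_\infty}{n+1} \to G$ which is stable in each of the first $n$ directions (from stability of $f$) and in both directions of the last factor (from the image lying in $PG$). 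This is exactly the data of an $(n+1)$-cube of $\gnerve G$, which in turn is an $n$-cube of $\rhom(\cube{1}, \gnerve G)$. Naturality in $G$ and compatibility with cubical operators are then immediate from the construction; the endpoint maps correspond to the face maps in the last direction, which is exactly the pair $(\face{*}{1,0}, \face{*}{1,1})$ of the cotensor.

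Once this natural isomorphism is in place, applying $\gnerve$ to the defining pullback of $\loopsp(G,v)$ produces a pullback square isomorphic to the defining pullback of $\loopsp(\gnerve G, v)$, so we obtain a natural isomorphism $\gnerve \loopsp(G,v) \iso \loopsp(\gnerve G, v)$. Checking that this isomorphism is pointed, i.e.\ that it identifies the distinguished $0$-cube of $\gnerve \loopsp(G,v)$ (the constant path at $v$) with the basepoint $v \degen{}{1}$ of $\loopsp(\gnerve G, v)$, is a direct unwinding of definitions.

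The main obstacle is the second paragraph: carefully matching stability conditions on the graph side with the cubical $n$-cube conditions on the other side, and checking that this bijection is actually natural in $\boxcat^\op$ (i.e.\ commutes with all faces, degeneracies, and connections), not merely a dimensionwise bijection. Everything else is a formal consequence of exactness of $\gnerve$ (\cref{thm:nerve_exact}).
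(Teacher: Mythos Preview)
Your proposal is correct and follows essentially the same route as the paper: apply $\gnerve$ to the pullback square of \cref{thm:graph_loopsp_pb}, use \cref{thm:nerve_fin_lim} to preserve the pullback, the product, and the terminal object, and then identify $\gnerve PG$ with $\rhom(\cube{1}, \gnerve G)$ compatibly with the endpoint maps. The only difference is packaging: the paper isolates the identification $\gnerve PG \cong \rhom(\cube{1}, \gnerve G)$ (together with the compatibility of $(\face{}{1,0},\face{}{1,1})$ with $(\face{*}{1,0},\face{*}{1,1})$) in \cref{thm:graph_fib_diag} and simply cites it, whereas you sketch this isomorphism directly by unwinding $n$-cubes via the closed monoidal structure on $\Graph$; your direct argument is exactly what underlies that lemma.
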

\begin{proof}
  Fix a pointed graph $(G,v)$.
  By \cref{thm:graph_loopsp_pb}, the square
  \[ \begin{tikzcd}
    \loopsp(G, v) \ar[r] \ar[d] \ar[rd, phantom, "\ulcorner" very near start] & PG \ar[d, "{(\face{*}{1,0}, \face{*}{1,1})}"] \\
    I_0 \ar[r, "{(v,v)}"] & G \times G
  \end{tikzcd} \]
  is a pullback.
  \cref{thm:nerve_fin_lim} shows the nerve functor preserves this pullback, thus the square
  \[ \begin{tikzcd}
    \gnerve (\loopsp(G, v)) \ar[r] \ar[d] \ar[rd, phantom, "\ulcorner" very near start] & \gnerve(PG) \ar[d, "{(\face{*}{1,0}, \face{*}{1,1})}"] \\
    \gnerve I_0 \ar[r, "{(v,v)}"] & \gnerve (G \times G)
  \end{tikzcd} \]
  is a pullback.
  \cref{thm:nerve_fin_lim} also shows the nerve preserves products and the terminal object.
  \cref{thm:graph_fib_diag} implies that $\gnerve(PG) \cong \rhom(\cube{1}, \gnerve G)$, hence the square
  \[ \begin{tikzcd}
    \gnerve (\loopsp(G, v)) \ar[r] \ar[d] \ar[rd, phantom, "\ulcorner" very near start] & \rhom(\cube{1}, \gnerve G) \ar[d, "{(\face{*}{1,0}, \face{*}{1,1})}"] \\
    \cube{0} \ar[r, "{(v,v)}"] & \gnerve G \times \gnerve G
  \end{tikzcd} \]
  is a pullback.
  That is, $\gnerve (\loopsp(G,v)) \cong \loopsp(\gnerve G, v)$.
\end{proof}

By \cref{thm:fib_pt}, the category $\Graph_*$ of pointed graphs has a fibration category structure as well.
Thus, the loop graph functor $\loopsp \from \Graph_* \to \Graph_*$ is a functor between fibration categories.
\begin{theorem} \label{thm:graph_loopsp_exact}
  The loop graph functor $\loopsp \from \Graph_* \to \Graph_*$ is exact.
\end{theorem}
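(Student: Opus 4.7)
The plan is to deduce exactness of $\loopsp \from \Graph_* \to \Graph_*$ from exactness of the cubical loop space functor $\loopsp \from \Kan_* \to \Kan_*$ (\cref{thm:cset_loopsp_exact}) by transferring the properties back through the nerve functor. The key input is \cref{thm:loopsp_graph_cset}, which gives a natural isomorphism $\gnerve \loopsp \iso \loopsp \gnerve$ of functors $\Graph_* \to \Kan_*$. We already know that $\gnerve$ is exact (\cref{thm:nerve_exact}) and, crucially, that it \emph{reflects} fibrations (by definition of the fibration category structure on $\Graph$), weak equivalences (\cref{thm:nerve_we_create}), and finite limits (\cref{thm:nerve_ref_fin_lim}). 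These reflection properties are what allow us to pull exactness back from $\Kan_*$.

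Concretely, I would verify the four axioms of an exact functor in turn. For preservation of fibrations: given a fibration $f$ of pointed graphs, $\gnerve f$ is a Kan fibration; since $\loopsp$ is exact on $\Kan_*$, $\loopsp \gnerve f \iso \gnerve \loopsp f$ is again a Kan fibration, so $\loopsp f$ is a fibration in $\Graph_*$. Preservation of acyclic fibrations follows by the same argument together with \cref{thm:nerve_we_create}. For the terminal object: $\gnerve \loopsp (I_0) \iso \loopsp \gnerve (I_0) \iso \loopsp \cube{0} \iso \cube{0}$, and since $\gnerve$ reflects isomorphisms we conclude $\loopsp(I_0) \iso I_0$. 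For preservation of pullbacks along fibrations: given such a pullback square in $\Graph_*$, apply $\gnerve$ to obtain a pullback square in $\Kan_*$ along a Kan fibration (by exactness of $\gnerve$), then apply $\loopsp$ on $\Kan_*$ to obtain another pullback square (by exactness of $\loopsp \from \Kan_* \to \Kan_*$); transport across the natural isomorphism of \cref{thm:loopsp_graph_cset} and use that $\gnerve$ reflects finite limits to conclude.

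There is no real obstacle here beyond bookkeeping; the essential work has already been done in establishing \cref{thm:loopsp_graph_cset} (which identifies $\gnerve \loopsp$ with $\loopsp \gnerve$) and in proving that $\gnerve$ both preserves and reflects the structure of the fibration categories in play. The mildest subtlety is making sure the natural isomorphism $\gnerve \loopsp \iso \loopsp \gnerve$ is compatible with basepoints so that the argument takes place in $\Kan_*$ rather than merely in $\Kan$; this is immediate from how the basepoint of $\loopsp(G,v)$ is defined (the constant path at $v$) and how it is sent to the degenerate $1$-cube $v \degen{}{1}$ in $\loopsp \gnerve(G,v)$. Once this is observed, the proof reduces to four short diagram chases as outlined above.
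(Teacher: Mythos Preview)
Your proposal is correct and follows essentially the same approach as the paper: use \cref{thm:loopsp_graph_cset} to identify $\gnerve\loopsp$ with $\loopsp\gnerve$, invoke exactness of $\gnerve$ and of $\loopsp$ on $\Kan_*$, and then use that $\gnerve$ reflects fibrations, acyclic fibrations, and finite limits to pull exactness back to $\Graph_*$. The paper's version is slightly more terse (it first observes that the composite $\loopsp\circ\gnerve$ is exact, transports across the natural isomorphism, and then reflects), but the content is identical to your axiom-by-axiom verification.
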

\begin{proof}
  By \cref{thm:nerve_exact,thm:cset_loopsp_exact}, the composite
  $\loopsp(\gnerve \uvar, \uvar) \from \Graph_* \to \cSet_*$ is exact.
  Applying \cref{thm:loopsp_graph_cset}, this gives that the composite $\gnerve(\loopsp(\uvar, \uvar)) \from \Graph_* \to \cSet_*$ is exact.
  The nerve functor reflects fibrations and acyclic fibrations as, by definition, it creates them.
  It reflects finite limits by \cref{thm:nerve_ref_fin_lim}. 
  From this, it follows that $\loopsp \from \Graph_* \to \Graph_*$ preserves fibrations, acyclic fibrations, and finite limits.
\end{proof}

\subsection*{Cubical enrichment of the category of graphs}

Recall (for instance from \cite[Def.~3.4.1]{riehl:cat-htpy}) that a functor $F \from \cat{C} \to \cat{D}$ between monoidal categories $(\cat{C}, \otimes_\cat{C}, I_\cat{C})$ and $(\cat{D}, \otimes_\cat{D}, I_\cat{D})$ is \emph{lax monoidal} if there exist natural transformations
\[ Fc \otimes_\cat{D} Fc' \to F(c \otimes c') \text{ and } I_\cat{D} \to FI_\cat{C} \]
subject to the associativity and unitality conditions, which we omit here.

\begin{lemma}\leavevmode \label{thm:nerve_monoidal}
  \begin{enumerate}
    \item For $m \geq 1$, the functor $\gnerve[m] \from \Graph \to \cSet$ is lax monoidal.
    \item The functor $\gnerve \from \Graph \to \cSet$ is lax monoidal.
  \end{enumerate}
\end{lemma}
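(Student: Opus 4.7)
The plan is to construct the lax monoidal structure on $\gnerve[m]$ by doctrinal adjunction from the strong monoidal structure on its left adjoint, then pass to the filtered colimit to obtain the corresponding structure on $\gnerve$. For part (1), observe that \cref{thm:reali_prod} together with the computation $\reali[m]{\cube{0}} = \gexp{I_m}{0} \cong I_0$ shows that $\reali[m]{\uvar} \from \cSet \to \Graph$ is strong monoidal with respect to $(\gprod, \cube{0})$ and $(\gtimes, I_0)$. Since $\gnerve[m]$ is right adjoint to $\reali[m]{\uvar}$, it acquires a canonical lax monoidal structure: the multiplication constraint $\gnerve[m] G \gprod \gnerve[m] H \to \gnerve[m](G \gtimes H)$ is the transpose of $\reali[m]{\gnerve[m] G \gprod \gnerve[m] H} \cong \reali[m]{\gnerve[m] G} \gtimes \reali[m]{\gnerve[m] H} \to G \gtimes H$, where the last map uses the counit in each factor; and the unit constraint $\cube{0} \to \gnerve[m] I_0$ is the canonical iso, since $I_0$ is terminal. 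Explicitly, on cubes the multiplication sends $(f \from \gexp{I_m}{a} \to G,\ g \from \gexp{I_m}{b} \to H)$ to $f \gtimes g \from \gexp{I_m}{a+b} \to G \gtimes H$ under the identification $\gexp{I_m}{a+b} \cong \gexp{I_m}{a} \gtimes \gexp{I_m}{b}$.

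For part (2), by \cref{thm:nerve_colim} the cubical set $\gnerve G$ is a filtered colimit of the $\gnerve[m] G$ along the transition maps $l^*, r^*$. The geometric product $\gprod$ is biclosed and hence preserves colimits in each argument separately; by cofinality of the diagonal of a product of copies of a filtered diagram, we have
\[ \gnerve G \gprod \gnerve H \cong \colim_m (\gnerve[m] G \gprod \gnerve[m] H) \text{.} \]
Provided the lax structure maps of part (1) are natural in $m$, they assemble into a map $\gnerve G \gprod \gnerve H \to \colim_m \gnerve[m](G \gtimes H) \cong \gnerve(G \gtimes H)$, which is the desired multiplicative constraint. The unit constraint $\cube{0} \to \gnerve I_0$ is again the canonical iso. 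Coherence for $\gnerve$ then descends from coherence of each $\gnerve[m]$ via the colimit.

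The only substantive point is naturality of the lax structure maps in $m$: concretely, one must verify that for $(f, g)$ as above, the equality
\[ (f \gtimes g) \circ \gexp{l}{a+b} = (f \circ \gexp{l}{a}) \gtimes (g \circ \gexp{l}{b}) \]
holds as maps $\gexp{I_{m+1}}{a+b} \to G \gtimes H$, under the identification $\gexp{I_{m+1}}{a+b} \cong \gexp{I_{m+1}}{a} \gtimes \gexp{I_{m+1}}{b}$, and analogously for $r$. This is immediate from bifunctoriality of $\gtimes$ applied to the commuting rectangle whose legs are $\gexp{l}{a+b}$ and $\gexp{l}{a} \gtimes \gexp{l}{b}$. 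The main obstacle, such as it is, lies in keeping track of the Day-convolution description of $\gprod$ when writing out the multiplication constraint on an arbitrary cube of $\gnerve[m] G \gprod \gnerve[m] H$ rather than on a pure tensor; once those identifications are made carefully, the remainder is formal.
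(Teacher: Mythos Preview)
Your argument is correct and takes a genuinely different route from the paper. The paper gives a direct, elementwise construction: using the description of the geometric product from \cite[Prop.~1.24]{doherty-kapulkin-lindsey-sattler}, an $n$-cube of $\gnerve G \gprod \gnerve H$ is a pair of stabilizing maps $(\gexp{I_\infty}{k} \to G,\ \gexp{I_\infty}{l} \to H)$ with $k+l=n$, and the lax structure map simply takes their $\gtimes$-product; coherence is checked by inspection. You instead invoke doctrinal adjunction, deducing lax monoidality of $\gnerve[m]$ from the strong monoidality of $\reali[m]{\uvar}$ established in \cref{thm:reali_prod}, and then pass to the filtered colimit after verifying that the transition maps $l^*, r^*$ are monoidal natural transformations (equivalently, that $\gexp{l}{a+b} = \gexp{l}{a} \gtimes \gexp{l}{b}$). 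Your approach is more structural and has the advantage that the coherence axioms come for free from the general theory, so the ``main obstacle'' you flag at the end is in fact a non-issue: since the lax structure on $\gnerve[m]$ is defined as an adjoint transpose, it is automatically well-defined on all cubes, not merely pure tensors, and no explicit Day-convolution bookkeeping is required. The paper's approach, by contrast, is more concrete and self-contained, avoiding any appeal to the theory of monoidal adjunctions.
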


\begin{proof}
  By \cref{thm:reali_prod}, the functors $\reali[m]{\uvar} \from \cSet \to \Graph$ are strong monoidal, and hence in particular, oplax monoidal.
  Thus, their right adjoints $\gnerve[m] \from \Graph \to \cSet$ are lax monoidal, proving (1).

  Clearly, $\gnerve I_0 \iso \cube 0$.  
  As the geometric product preserves colimits in each variable and colimits commute with colimits, the cubical set $\gnerve G \gprod \gnerve H$ (for graphs $G$ and $H$) is the colimit of the following diagram.
  \[ \gnerve G \gprod \gnerve H = \colim \left( \begin{tikzcd}
    \gnerve[1] G \gprod \gnerve[1] H \ar[d, hook] \ar[r, hook] & \gnerve[2] G \gprod \gnerve[1] H \ar[r, hook] \ar[d, hook] & \gnerve[3] G \gprod \gnerve[1] H \ar[r, hook] \ar[d, hook] & \dots \\
    \gnerve[1] G \gprod \gnerve[2] H \ar[r, hook] \ar[d, hook] & \gnerve[2] G \gprod \gnerve[2] H \ar[r, hook] \ar[d, hook] & \gnerve[3] G \gprod \gnerve[2] H \ar[r, hook] \ar[d, hook] & \dots \\
    \gnerve[1] G \gprod \gnerve[3] H \ar[r, hook] \ar[d, hook] & \gnerve[2] G \gprod \gnerve[3] H \ar[r, hook] \ar[d, hook] & \gnerve[3] G \gprod \gnerve[3] H \ar[r, hook] \ar[d, hook] & \dots \\
    \vdots & \vdots & \vdots & \ddots
  \end{tikzcd} \right) \] 
  Computing this colimit component-wise in $\Set$, one verifies that the colimit of the diagonal
  \[ \colim (\begin{tikzcd}
    \gnerve[1] G \gprod \gnerve[1] H \ar[r, hook] & \gnerve[2] G \gprod \gnerve[2] H \ar[r, hook] & \gnerve[3] G \gprod \gnerve[3] H \ar[r, hook] & \dots
  \end{tikzcd}) \]
  computes the same cubical set.
  As $\gnerve (G \gtimes H)$ is the colimit
  \[ \gnerve (G \gtimes H) = \colim (\begin{tikzcd}
    \gnerve[1] (G \gtimes H) \ar[r, hook] & \gnerve[2] (G \gtimes H) \ar[r, hook] & \gnerve[3] (G \gtimes H) \ar[r, hook] & \dots
  \end{tikzcd}) \]
  the lax monoidal maps $\gnerve[m] G \gprod \gnerve[m] H \to \gnerve[m] (G \gtimes H)$ induce a map on colimits $\gnerve G \gprod \gnerve H \to \gnerve (G \gtimes H)$ which satisfies the required associativity and unitality conditions. 
\end{proof}

\begin{remark}
   An alternative proof of (2) can be given using \cite[Prop.~1.24]{doherty-kapulkin-lindsey-sattler}, which gives an explicit description of the geometric product of cubical sets.
   Explicitly, the $n$-cubes of $\gnerve G \gprod \gnerve H$ are in bijective correspondence with pairs of cubes
   \[ (\cube k \to \gnerve G,\ \cube l \to \gnerve H)\text{,} \]
   where $k+l = n$.
   By definition of $\gnerve$, each such a pair corresponds in turn to pair of maps
   \[  (\gexp{I_\infty}{k} \to G,\ \gexp{I_\infty}{l} \to H) \]
   that stabilize in all directions.
   Taking products of these maps, we obtain a map $\gexp{I_\infty}{n} \to G \gtimes H$ that stabilizes in all directions.
\end{remark}

\begin{corollary}
  The nerve functor $\gnerve \from \Graph \to \cSet$ preserves homotopy equivalences. \noproof
\end{corollary}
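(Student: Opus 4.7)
The plan is to use the lax monoidal structure from \cref{thm:nerve_monoidal} to transport A-homotopies to cubical homotopies. A homotopy equivalence in $\Graph$ consists of maps $f \from G \to H$ and $g \from H \to G$ together with A-homotopies $\eta \from G \gtimes I_m \to G$ from $gf$ to $\id[G]$ and $\theta \from H \gtimes I_k \to H$ from $fg$ to $\id[H]$, for some $m, k \geq 0$. We want to produce cubical homotopies $\gnerve G \gprod \cube{1} \to \gnerve G$ and $\gnerve H \gprod \cube{1} \to \gnerve H$ exhibiting $\gnerve f$ and $\gnerve g$ as mutual cubical homotopy inverses.

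First I will exhibit, for each $m \geq 0$, a canonical $1$-cube $\iota_m \from \cube{1} \to \gnerve I_m$ whose $(1,0)$- and $(1,1)$-faces are the $0$-cubes $0$ and $m$. By definition of $\gnerve$, this amounts to specifying a map $I_\infty \to I_m$ that is stable in both directions, stabilizing at $0$ and $m$ respectively. The bounding map $\bound{m} \from I_\infty \to I_m$ introduced earlier does exactly this, so I set $\iota_m := \bound{m}$.

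Next, for any A-homotopy $\eta \from G \gtimes I_m \to H$ from $f_0$ to $f_1$, I will form the composite
\[
    \gnerve G \gprod \cube{1}
        \xrightarrow{\id \gprod \iota_m}
    \gnerve G \gprod \gnerve I_m
        \longrightarrow
    \gnerve(G \gtimes I_m)
        \xrightarrow{\gnerve \eta}
    \gnerve H\text{,}
\]
where the middle map is the lax monoidal structure map from \cref{thm:nerve_monoidal}. Restricting to the $(1,0)$- and $(1,1)$-faces replaces $\iota_m$ by its endpoints $0$ and $m$; unwinding the lax monoidal structure, these compositions give precisely $\gnerve f_0$ and $\gnerve f_1$, so the resulting map is a cubical homotopy from $\gnerve f_0$ to $\gnerve f_1$. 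This verification is routine, since the lax monoidal map is induced by the monoidal structure on the category of cubes sending $([1]^k,[1]^l)$ to $[1]^{k+l}$, and the face inclusions $\cube{0} \to \cube{1}$ match up correctly with the vertex inclusions $I_0 \to I_m$.

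Applying this construction to the homotopies $\eta$ and $\theta$ yields cubical homotopies from $\gnerve g \circ \gnerve f$ to $\id[\gnerve G]$ and from $\gnerve f \circ \gnerve g$ to $\id[\gnerve H]$, so $\gnerve f$ is a cubical homotopy equivalence with inverse $\gnerve g$. The only nontrivial point is the identification of the endpoints of the composite homotopy, which reduces to a compatibility between the lax monoidal structure map $\gnerve G \gprod \gnerve I_m \to \gnerve(G \gtimes I_m)$ and the inclusions of the endpoint vertices of $I_m$; this follows from the explicit description in the proof of \cref{thm:nerve_monoidal} by tracing through what happens on $0$-cubes.
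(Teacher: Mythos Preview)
Your proposal is correct and is precisely the argument the paper leaves implicit: the corollary is stated without proof immediately after \cref{thm:nerve_monoidal}, and the intended reasoning is exactly that the lax monoidal structure lets one push an A-homotopy $G \gtimes I_m \to H$ through the comparison map $\gnerve G \gprod \gnerve I_m \to \gnerve(G \gtimes I_m)$, using a chosen $1$-cube $\cube{1} \to \gnerve I_m$ with the correct endpoints. Your choice of $\bound{m}$ for this $1$-cube and your verification of the endpoint conditions via the explicit description of the lax structure in the proof of \cref{thm:nerve_monoidal} are the right details to fill in.
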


We describe the notion of enriched categories informally, with a reference to \cite[Def.~3.3.1]{riehl:cat-htpy} in lieu of a fully formal statement.

\begin{definition}
  For a monoidal category $(\cat{V}, \otimes, 1)$, a \emph{$(\cat{V}, \otimes, I)$-enriched category} $\cat{C}$ consists of
  \begin{itemize}
    \item a class of objects $\ob\cat{C}$;
    \item for $X, Y \in \ob\cat{C}$, a \emph{morphism} object $\cat{C}(X, Y) \in \cat{V}$;
    \item for $X, Y, Z \in \ob\cat{C}$, a \emph{composition} morphism $\circ \from \cat{C}(Y, Z) \otimes \cat{C}(X, Y) \to \cat{C}(X, Z)$ in $\cat{V}$;
    \item for $X \in \ob\cat{C}$, an \emph{identity} morphism $1 \to \cat{C}(X, X)$ in $\cat{V}$,
  \end{itemize}
  subject to appropriate associativity and unitality axioms (cf.~\cite[Def.~3.3.1]{riehl:cat-htpy}).
\end{definition}

\begin{example} 
  Any locally-small category is a $(\Set, \times, \{ \ast \})$-enriched category, where the objects, morphism sets, composition function, and identity morphisms are as usually defined. 
\end{example}

\begin{example}
  Any closed monoidal category is enriched over itself.
  In particular, $\Graph_{\mathsf{G}}$ is a $(\Graph, \gtimes, I_0)$-enriched category where
  \begin{itemize}
    \item $\ob\Graph_{\mathsf{G}}$ is the collection of all graphs;
    \item for graphs $G, H$, the morphism graph is $\ghom{G}{H}$;
    \item for graphs $G, H, J$, the composition morphism is the graph map given by composition of graph maps regarded as vertices:
    \[ \ghom{H}{J} \gtimes \ghom{G}{H} \to \ghom{G}{J}\text{;} \]
    \item the identity morphism is the identity map on $G$ as a vertex $\id[G] \from I_0 \to \ghom{G}{G}$.
  \end{itemize}
\end{example}

\begin{example}
  Since enrichment can be transferred along lax monoidal functors \cite[Lem.~3.4.3]{riehl:cat-htpy}, \cref{thm:nerve_monoidal} implies there is a $(\cSet, \gprod, \cube{0})$-enriched category $\Graph_{\boxcat}$ of graphs where $\gnerve(\ghom{G}{H})$ is the morphism cubical set.
  Composition and identity morphisms are defined analogously.
\end{example}

\begin{definition}
  A $(\cSet, \gprod, \cube{0})$-enriched category $\cat{C}$ is \emph{locally Kan} if, for all $X, Y \in \ob\cat{C}$, the cubical set $\cat{C}(X, Y)$ is a Kan complex.
\end{definition}

\begin{theorem} \label{thm:graph-locally-kan}
  The $(\cSet, \gprod, \cube{0})$-enriched category $\Graph_{\boxcat}$ of graphs is locally Kan.
\end{theorem}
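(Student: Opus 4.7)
The plan is to observe that this is essentially immediate from part (1) of \cref{thm:main}. By construction of the transferred enrichment (as described in the preceding example and in \cite[Lem.~3.4.3]{riehl:cat-htpy}), the morphism cubical set between two graphs $G$ and $H$ in $\Graph_{\boxcat}$ is given by applying $\gnerve$ to the internal hom-graph $\ghom{G}{H}$ in $\Graph$. That is, $\Graph_{\boxcat}(G, H) = \gnerve(\ghom{G}{H})$.

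So I would fix graphs $G$ and $H$, note that $\ghom{G}{H}$ is itself an object of $\Graph$ (since $(\Graph, \gtimes, I_0, \ghom{\uvar}{\uvar})$ is closed symmetric monoidal), and apply part (1) of \cref{thm:main} to this graph. This yields that $\gnerve(\ghom{G}{H})$ is a Kan complex, which is exactly the condition required for $\Graph_{\boxcat}$ to be locally Kan.

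There is no real obstacle here: the heavy lifting has been done in \cref{thm:nerve_kan}, which produces fillers for arbitrary open boxes in the nerve of any graph via the construction of $\f \from \gexp{I_{3m}}{n} \to \reali[m]{\dfobox}$. The only thing to double-check is that the identification of the morphism cubical sets of the transferred enrichment with $\gnerve(\ghom{G}{H})$ is indeed canonical; this follows from the standard recipe for transferring enrichment along a lax monoidal functor, using that $\gnerve$ is lax monoidal by \cref{thm:nerve_monoidal} and that the hom-graphs $\ghom{G}{H}$ provide the internal hom for the closed monoidal structure on $\Graph$.
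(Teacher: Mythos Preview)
Your proposal is correct and matches the paper's proof, which simply states ``Follows from \cref{thm:main}.'' You have merely unpacked what this one-line proof means: the hom-object in the transferred enrichment is $\gnerve(\ghom{G}{H})$, and part (1) of \cref{thm:main} says the nerve of any graph is a Kan complex.
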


\begin{proof}
  Follows from \cref{thm:main}.
\end{proof}

By the results of \cite{kapulkin-voevodsky}, we have established a presentation of the $(\infty, 1)$-category of graphs (localized at $A$-homotopy equivalences).
In subsequent work, we use this presentation to show that A-homotopy equivalences are not part of a model structure on the category of graphs, as well as to study homotopy limits in discrete homotopy theory.

\bibliographystyle{amsalphaurlmod}
\bibliography{all-refs.bib}

\end{document}